\def\section{\@startsection{section}{1}%
	\z@{.7\linespacing\@plus\linespacing}{.5\linespacing}%
	{\bfseries\normalfont\scshape
		\centering
}}
\def\@secnumfont{\bfseries}
\newcommand{\newhat}{\scalebox{1.5}[.75]{\trimbox{0pt 1.1ex}{\textasciicircum}}}
\newcommand{\stretchedhat}[1]{\accentset{\newhat}{#1}}
\def\shat{\stretchedhat}
\numberwithin{equation}{section}
\newtheorem{Thm}{Theorem}[section]
\newtheorem{Lem}{Lemma}[section]
\newtheorem{Pro}{Proposition}[section]
\newtheorem{Rem}{Remark}[section]
\newtheorem{Cor}{Corollary}[section]
\newtheorem{Def}{Definition}[section]
\begin{document}
\title[Optimal control of the 2D and 3D Landau-Lifshitz-Bloch Equation]
{Magnetization control problem for the 2D and 3D evolutionary \medskip\\ Landau-Lifshitz-Bloch equation}
\author[Sidhartha Patnaik \ and \ Kumarasamy Sakthivel ]{Sidhartha Patnaik \ and \ Kumarasamy Sakthivel}
\address{Department of Mathematics \\
	Indian Institute of Space Science and Technology (IIST) \\
	Trivandrum- 695 547, INDIA}
\email{sidharthpatnaik96@gmail.com, sakthivel@iist.ac.in}
\curraddr{}


\begin{abstract}
	In this study, we investigate the optimal control of the Landau-Lifshitz-Bloch  equation within confined domains in $\mathbb R^n$ for $n= 2, 3.$ We establish the existence of strong solutions for dimensions $n=1, 2, 3$ under suitable growth conditions on the control, and analyze the existence and uniqueness of regular solutions.  We formulate the control problem in which only  a fixed set of finite magnetic field coils can constitute the external magnetic field (control). We define a cost functional by aiming at minimizing the energy discrepancy between the evolving magnetic moment and the desired state. We demonstrate the existence of an optimal solution pair and employ the classical adjoint problem approach to derive a first-order necessary optimality condition. Given the non-convex nature of the optimal control problem, we derive a second-order sufficient optimality condition using a cone of critical directions. Finally, we prove two crucial results, namely, a global optimality condition and uniqueness of an optimal control.
\end{abstract}
\subjclass[2010]{35K20, 35Q56, 35Q60, 49J20}
\keywords{Landau-Lifshitz-Bloch equation, Magnetization dynamics, Optimal control, First-order optimality condition, Second-order optimality condition, Global optimality}
\maketitle

\section{Introduction}
The Landau–Lifshitz–Bloch (LLB) equation stands as a pivotal model in dynamic micromagnetics, providing a comprehensive framework for understanding magnetic behaviors in various materials, especially at elevated temperatures. Unlike its predecessor, the Landau–Lifshitz–Gilbert (LLG) equation, LLB incorporates both transverse and longitudinal relaxation components, making it particularly relevant for high-temperature applications. The LLB equation introduces a dynamic nature to the magnetization magnitude, no longer preserving its constancy as observed in the LLG equation, making it more
crucial for scenarios near or above the Curie temperature(T$_C$). In these contexts, where traditional models fall short, the LLB equation offers a more accurate depiction of magnetization dynamics. Above the Curie temperature, a ferromagnetic material transitions to the paramagnetic state, where there is no spontaneous magnetization. In this paramagnetic state, the LLB equation still applies, but the terms related to ferromagnetic interactions, such as exchange and anisotropy, become negligible. In the context of the LLB equation, the damping parameters may still be present to model the relaxation processes, but their specific values and interpretation could be different in the paramagnetic state compared to the ferromagnetic state.

Consider the magnetization $\bar m$ with $\bar m_s^0$ as its saturation magnetization value at $t=0$.  Then for $t>0$ and $\Omega\subset \mathbb{R}^n$, $n=1,2,3$,  the average spin polarization $m=\bar m/m_s^0$ satisfies the following LLB equation (\cite{DAG,DAG2})
\begin{equation}\label{MGE}
	m_t= \gamma m\times H_{eff} + D_L\  \frac{1}{|m|^2} (m\cdot H_{eff})\ m -D_T\ \frac{1}{|m|^2}m \times (m \times H_{eff}),
\end{equation}
where $|\cdot |$ stands for the Euclidean norm in $\mathbb{R}^3$, $\gamma>0$ is the gyromagnetic ratio, $D_L$ and $D_T$ are the longitudinal and transverse damping parameters, respectively.  The effective field ($H_{eff}$) represents the total magnetic field acting on a ferromagnetic material. It encompasses contributions from external fields, magnetic anisotropy, and thermal fluctuations.


Above the Curie temperature (T$_C$), ferromagnetic materials lose their ordered magnetic structure and transition into a paramagnetic state. When the temperature is high, changes in temperature throw off the ordered magnetic structure. This makes the longitudinal ($D_L$) and transverse ($D_T$) damping parameters equal. This balanced damping, observed in the LLB equation, signifies a significant shift in the material's magnetic behavior, impacting its response to external magnetic fields.  If we consider the longitudinal susceptibility as $\chi_\|$ and the external magnetic field as $u$, then the effective field $H_{eff}$ is given by (\cite{QLBG})
$$H_{eff}= \Delta m - \frac{1}{\chi_\|}\left(1+\frac{3}{5}\frac{T}{T-T_C}|m|^2\right)m+u.$$
Substituting the value of $H_{eff}$ in \eqref{MGE},  using the equality $D_L=D_T=\kappa$,   and employing the vector-product property $m \times (m \times H_{eff})=(m \cdot H_{eff}) m - |m|^2H_{eff},$  one can derive
$$m_t =\nu \ m \times \Delta m + \nu\ m \times u + \kappa \left(\Delta m- \frac{1}{\chi_\|}\left(1+\frac{3}{5}\frac{T}{T-T_C}|m|^2\right)m+u\right).$$
For simplicity, we take the value of the constants $\nu=\kappa=\chi_\|=\frac{3}{5} \frac{T}{T-T_C}=1$.   Therefore, for any smooth bounded domain $\Omega\subset \mathbb{R}^n$, the  controlled LLB equation with initial data $m_0$, control $u$  and homogeneous Neumann boundary condition is given by 
\begin{equation}\label{NLP}
	\begin{cases}
		m_t- \Delta m= m \times \Delta m  + m \times u- \big(1+ |m|^2\big)m + u, \ \ \ (x,t)\in \Omega_T,\\
		
		\frac{\partial m}{\partial \eta}=0, \ \ \ \ \ \ \ (x,t) \in  \partial\Omega_T,\\
		
		m(\cdot,0)=m_0 \ \ \text{in} \ \Omega,
	\end{cases}	
\end{equation}
where $\eta$ is the outward unit  normal vector to the boundary $\partial \Omega$. Moreover, we have assumed that the initial data $m_0:\Omega\to\mathbb R^3$ satisfies the following conditions 
\begin{equation}\label{IC}
		m_0 \in H^2(\Omega),\ \ \ \frac{\partial m_0}{\partial \eta}=0 \ \ \text{on} \ \partial \Omega. 
\end{equation}

In this paper, we investigate the optimal control problem with an external magnetic field generated by a finite number of fixed magnetic field coils as motivated in \cite{PK}.  Let there be $`N$' such coils, and denote the magnetic field of the $k$-th coil as follows:
$$u_k:\Omega_T\to \mathbb{R}^3, \ \ \ \ \ (x,t)\mapsto u_k(x,t):= U_k(t)B_k(x),$$
where $U_k:[0,T]\mapsto \mathbb{R}$ represents the intensity of the magnetic field, proportional to the electric current flowing through the $k$-th coil. The function $B_k:\Omega \mapsto \mathbb{R}^3$ characterizes the geometry of the $k$-th coil. Finally, considering linear superposition, the total external magnetic field is expressed as given below:
\begin{equation}\label{gotc}
	u(x,t)=\sum_{k=1}^N U_k(t)B_k(x) \ \ \ \ \ \forall \ (x,t)\in \Omega_T.
\end{equation}

Here, we assume that the geometry of each field coil $B_k\in H^1(\Omega,\mathbb{R}^3)$ for $k=1,2,...,N$. Defining $B_k$ within this space offers the advantage that, for any control $U=(U_1,U_2,...,U_N)\in L^2(0,T;\mathbb{R}^N)$, the space-time dependent function $u(x,t)$ belongs to $L^2(0,T;H^1(\Omega))$. Indeed, let $\zetaup$ be an operator from $L^2(0,T;\mathbb{R}^N)$ to $L^2(0,T;H^1(\Omega))$ defined by  $\zetaup(U)=\sum_{k=1}^N U_k(t)B_k(x)$.  Then by using simple norm estimates, we can find a bound for $\zetaup$ as follows
\begin{equation}\label{PEST} 	
	\|\zetaup(U)\|_{L^2(0,T;H^1(\Omega))} \leq \sum_{k=1}^N \|B_k\|_{H^1(\Omega)}\ \|U_k\|_{L^2(0,T)}\leq \max_{1\leq k\leq N}\|B_k\|_{H^1(\Omega)} \ \|U\|_{L^2(0,T;\mathbb{R}^N)}.	
\end{equation} 	
This control setup holds significant physical relevance and offers practical numerical implementation opportunities.  Further,  the analysis of optimality conditions for the control problem demands more regularity on a space-time dependent control for the regularity of solutions, which restricts us from deriving important second-order conditions (Theorem \ref{T-SOLO}) in a critical cone, whereas the control beloging to $L^2(0,T;\mathbb{R}^N)$ relaxes this requirement and allows us to work with admissible box-type constraints in $L^2(0,T;\mathbb{R}^N)$. 

Let $m_d: \Omega \times [0,T] \to \mathbb{R}^3$ be the desired evolutionary magnetic moment and $m_\Omega : \Omega \to \mathbb{R}^3$ be the final time target moment. We consider  the optimal control problem of minimizing the objective/cost functional $\mathcal{J}:\mathcal{M} \times \mathbb{U}_{ad} \to \mathbb{R}^+$ defined  by
\begin{equation}\label{CF-2}
	\mathcal J(m,U):= \frac{1}{2} \int_0^T \int_{\Omega} |  m- m_d|^2\ dx\  dt +  \frac{1}{2} \int_{\Omega} |m(x,T) -m_\Omega(x)|^2\ dx +\frac{1}{2} \sum_{i=1}^N\|U_i\|^2_{L^2(0,T)},					
\end{equation}
where $\mathcal{M}$ is the set of regular solutions and $\mathbb{U}_{ad}$ is  the admissible set of fixed magnetic field coils with box-type constraints defined later.  Besides, the pair  $(m,U)$ solves the following  system controlled by a finite number of fixed magnetic field coils:
\begin{equation}\label{NLP-EV}
	\begin{cases}
		m_t- \Delta m= m \times \Delta m  + m \times \zetaup(U)- \big(1+ |m|^2\big)m + \zetaup(U), \ \ \ (x,t)\in \Omega_T,\\
		
		\frac{\partial m}{\partial \eta}=0, \ \ \ \ \ \ \ (x,t) \in  \partial\Omega_T,\\
		
		m(\cdot,0)=m_0 \ \ \text{in} \ \Omega.
	\end{cases}	
\end{equation}

This research explores the dynamics of LLB by shedding light on its behavior when temperatures surpass the Curie threshold. By delving into the intricacies of LLB-based micromagnetics, this study contributes to a deeper understanding of magnetic phenomena in high-temperature environments since it has found extensive applications in simulating magnetization dynamics near Curie temperatures (\cite{UADH}). The LLB equation is used to model how the magnetization changes with temperature in heat-assisted magnetic recording (HAMR), especially close to the Curie temperature \cite{MKEG}. This application enables the calculation of reversal times and mechanisms under varying thermal conditions, thereby offering critical insights into optimizing the thermal stability and switching behaviors of magnetic nanograins, which are essential for the development of next-generation HAMR technologies. The LLB equation is utilized to model the ultrafast magnetization switching dynamics induced by femtosecond laser pulses, accounting for the inverse Faraday effect and thermal effects through the two-temperature model \cite{KVAM}. The LLB equation is also used to model how domain walls move in response to changes in temperature, taking into account how exchange stiffness and entropy change with temperature, as shown in \cite{DHUN}.

{\bf A brief literature survey of LLG and LLB equations.} Over the last few decades, researchers have conducted a comprehensive examination of the Landau-Lifshitz-Gilbert (LLG) equation. In \cite{FAAS}, the existence and non-uniqueness of a global weak solution of the LLG equation in a 3D bounded domain have been explored. Global regular solutions for both a bounded domain in $\mathbb{R}^n,n=2,3$ and the entire domain $\mathbb{R}^3$ have been established in \cite{GCPF} and \cite{GCPF2}, respectively.  The papers \cite{TDMKAP} and \cite{SPSK,SPSK2} respectively analyzed the optimal control problem of the 1D and 2D LLG equations. These studies collectively contribute to an enhanced understanding of the dynamics of the LLG equation and its diverse applications. For more results on the solvability and controllability of the LLG equation, one may refer to \cite{SAGCSLCP, FAKB, WFB, GCSL2, AP}.
However, there are only a limited number of articles available concerning the LLB equation without the \emph{nonlinear control} in the effective field. In \cite{KNL}, the focus was on investigating the existence of weak solutions in a bounded domain. The work presented in \cite{KHDH} delves into the entirety of the temperature spectrum, demonstrating the global existence of strong solutions, time-periodic solutions, and steady-state solutions for the LLB equation. The 1D compressible LLB equation in $\mathbb R$ was the subject of study in \cite{BGFL}, in which the existence and uniqueness of a global smooth solution were established. The authors in \cite{QLBG} showed that the LLB equation has a smooth solution in the domain $\mathbb{R}^2$. They also got the same result in the domain $\mathbb{R}^3$ with small initial data.

{\bf Main contributions of the paper.} To the best of our knowledge, this is the first paper to study the optimal control of the LLB equation in a bounded domain in $\mathbb R^n,n=1,2,3.$ The  control input is introduced through an external magnetic field in the effective field. We demonstrate the existence of strong solutions for a bounded domain when n = 1, 2, and 3. In contrast to the 2D LLG equation (\cite{GCPF, SPSK}),  where the regular solution exists when the initial field $m_0$ is sufficiently small, the 2D LLB equation admits a regular solution for any initial data and control. But in the case of the 3D LLB equation, due to the severe nonlinearity in the magnetic field $m$ as well as the control $\zetaup(U),$ we only prove that the regular solution exists for a small enough $m_0$ and $U$ (see Theorem  \ref{T-RS}).
We examine an optimal control problem in which a finite number of fixed magnetic field coils generate the control. This construction is crucial from an application point of view, as it provides a more practical and adequate method for implementing the control. We verify the existence of an optimal control and present a first-order optimality condition in the form of a variational inequality. The second pivotal result is the derivation of a second-order sufficient optimality condition on a cone of critical directions, achieved through a rigorous technical argument of \cite{ECFT}. To achieve this, we demonstrated the Frechet differentiability of both the control-to-state operator and the control-to-costate operator, as well as their Lipschitz continuity. The last and most important contribution is a sufficient condition on a final time $T$ that indicates the uniqueness of an optimal control, together with a global optimality result that is necessary for numerical analysis. 

The manuscript is organized as follows. The section \ref{MRFSI} is devoted to state all the main results of this paper. We demonstrate the existence of strong and regular solutions in section \ref{S-WSRS}. The existence of optimal control, analysis of control-to-state operator and  first-order optimality conditions are given in section \ref{S-FOOC}. The section \ref{S-SOOC} concentrates on the crucial result of second-order local optimality conditions, while a global optimality is analyzed in section \ref{S-GOC}. Finally, in section \ref{S-UOO}, we prove the uniqueness of optimal solution.

\section{Main Results, Function Spaces and Inequalities}\label{MRFSI}

\subsection{Main Results}\label{MR}
In this subsection, we list the function spaces used consistently in our paper and provide a brief overview of the main findings and key results of our study.
To address the optimal control problem associated with the higher-dimensional LLB equation, the upcoming theory necessitates more than just strong solutions. Consequently, we define the following set of admissible solution spaces:
	\begin{flalign*}
		\mathcal{M}_S&:= W^{1,2}(0,T;H^2(\Omega),L^2(\Omega))\ =\{m \in L^2(0,T;H^2(\Omega)) \ |\ m_t \in L^{\frac{4}{3}}(0,T;L^2(\Omega))\},\\
		\text{and}\ \ \mathcal{M}&:=W^{1,2}(0,T;H^3(\Omega),H^1(\Omega))=\{m \in L^2(0,T;H^3(\Omega)) \ |\ m_t \in L^2(0,T;H^1(\Omega))\}.
	\end{flalign*}
	A function $m_u \in \mathcal{M}_S$ is said to be a ``strong solution" of system \eqref{NLP} if it satisfies the system almost everywhere, with the control function $u\in L^2(0,T;L^s(\Omega))$. On the other hand, a  function $m_u \in \mathcal{M}$ is referred to as a ``regular solution" of system \eqref{NLP} if it satisfies the system almost everywhere while being associated with the control function $u\in L^2(0,T;H^1(\Omega))$.
	
	Define the solution space for the weak adjoint problem as $$\mathcal{Z} := W^{1,2}(0,T;H^1(\Omega),H^1(\Omega)^*):=\left\{z \in L^2(0,T;H^1(\Omega)) \mid z_t \in L^2(0,T;H^1(\Omega)^*)\right\}.$$

	\begin{Thm}(Existence of Strong Solution for n=1,2 and 3)\label{T-SS}	
		For any $\delta >0$, let us fix the value 
		$$s= \begin{cases}
			2  \ \  & \text{for}\ \ n=1, \\
			2+\delta  \ \  & \text{for}\ \ n=2,\\
			3 \ \  & \text{for}\  \ n=3.
		\end{cases}
		$$
		Suppose the control $u\in L^2(0,T;L^s(\Omega))$ and the initial data $m_0\in H^1(\Omega)$. 
		Then there exists a strong solution $m\in \mathcal{M}_S$ of system \eqref{NLP}. Moreover, there exists a constant $C(\Omega,T)>0$ such that the following estimate holds:
		\begin{align}\label{SSEE}
			&\|m\|^2_{L^\infty(0,T;H^1(\Omega))} + \|m\|^2_{L^2(0,T;H^2(\Omega))}  + \|m_t\|^2_{L^{\frac{4}{3}}(0,T;L^2(\Omega))}    \nonumber\\
			&\  \ \ \     \leq C \ \left(1+\|m_0\|^2_{H^1(\Omega)} + \|u\|^2_{L^2(0,T;L^2(\Omega))} \right)^3 \  \exp\left\{C\ \|u\|^2_{L^2(0,T;L^s(\Omega))}\right\}.
		\end{align}	
		Furthermore, for $n=2$ the strong solution will be unique.
	\end{Thm}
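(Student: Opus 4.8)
\medskip\noindent\emph{Proof strategy.} The plan is to construct the solution by a Galerkin approximation, obtain uniform a priori bounds, and pass to the limit by compactness. Let $\{e_j\}$ be the $L^2(\Omega)$-orthonormal eigenfunctions of the Neumann Laplacian, put $m^N=\sum_{j=1}^N c_j^N(t)e_j$ with $m^N(0)$ the $L^2$-projection of $m_0$ (which also converges to $m_0$ in $H^1(\Omega)$, the $e_j$ being orthogonal in $H^1$ as well), and let $m^N$ solve the projected version of \eqref{NLP}; local solvability of the resulting ODE system is standard, and global existence follows from the estimates below. Testing with $m^N$ annihilates both cross-product terms (since $a\times b\perp a$), turns $-\langle(1+|m^N|^2)m^N,m^N\rangle$ into $-\|m^N\|_{L^2}^2-\|m^N\|_{L^4}^4$ on the left, and leaves $\langle u,m^N\rangle$, absorbed by Young's inequality; this bounds $m^N$ in $L^\infty(0,T;L^2)\cap L^2(0,T;H^1)\cap L^4(\Omega_T)$ by $C\big(1+\|m_0\|_{L^2}^2+\|u\|_{L^2(0,T;L^2)}^2\big)$. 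Testing next with $-\Delta m^N$ again kills $\langle m^N\times\Delta m^N,\Delta m^N\rangle$, and after integration by parts (using $\partial_\eta m^N=0$) the term $\langle(1+|m^N|^2)m^N,\Delta m^N\rangle=-\|\nabla m^N\|_{L^2}^2-\int_\Omega\big(|m^N|^2|\nabla m^N|^2+\tfrac12|\nabla|m^N|^2|^2\big)\le 0$ is dissipative, so only $-\langle m^N\times u,\Delta m^N\rangle$ is delicate. I would estimate it by $\|m^N\|_{L^r}\|u\|_{L^s}\|\Delta m^N\|_{L^2}$, where $r$ is the exponent with $H^1(\Omega)\hookrightarrow L^r(\Omega)$ ($r=\infty$ for $n=1$, any $r<\infty$ for $n=2$, $r=6$ for $n=3$): this is precisely what forces $s=2,\,2+\delta,\,3$ so that $1/r+1/s\le 1/2$. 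Young's inequality, the elliptic estimate $\|m^N\|_{H^2}\lesssim\|\Delta m^N\|_{L^2}+\|m^N\|_{L^2}$, and Grönwall's inequality with the integrable weight $\|u(t)\|_{L^s}^2$ then bound $m^N$ in $L^\infty(0,T;H^1)\cap L^2(0,T;H^2)$ with the factor $\exp\{C\|u\|_{L^2(0,T;L^s)}^2\}$.

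To control $m^N_t=\Delta m^N+m^N\times\Delta m^N+m^N\times u-(1+|m^N|^2)m^N+u$ in $L^{4/3}(0,T;L^2(\Omega))$, all terms but the exchange term are immediate. For $m^N\times\Delta m^N$ I would use $\|m^N\times\Delta m^N\|_{L^2}\le\|m^N\|_{L^\infty}\|\Delta m^N\|_{L^2}$ together with the Agmon--Gagliardo--Nirenberg inequality ($\|v\|_{L^\infty}\lesssim\|v\|_{H^1}^{1/2}\|v\|_{H^2}^{1/2}$ for $n=3$, $\|v\|_{L^\infty}\lesssim\|v\|_{L^2}^{1/2}\|v\|_{H^2}^{1/2}$ for $n=2$, $\|v\|_{L^\infty}\lesssim\|v\|_{H^1}$ for $n=1$); inserting the already-established $H^1$ bound into the prefactor yields $\|m^N\times\Delta m^N\|_{L^2}\lesssim\|m^N\|_{H^2}^{3/2}$ (resp.\ $\lesssim\|m^N\|_{H^2}$ when $n=1$), which lies in $L^{4/3}(0,T)$ precisely because $\|m^N\|_{H^2}^2\in L^1(0,T)$ --- this calibration is the reason for the exponent $\tfrac43$ in the definition of $\mathcal{M}_S$. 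The cubic term obeys $\||m^N|^2m^N\|_{L^2}=\|m^N\|_{L^6}^3\lesssim\|m^N\|_{H^1}^3\in L^\infty(0,T)$, and squaring its $L^{4/3}(0,T;L^2)$-norm produces $\big(\|m^N\|_{L^\infty(0,T;H^1)}^2\big)^3$, which --- after inserting the $H^1$ bound --- is what yields the cubic power $\big(1+\|m_0\|_{H^1}^2+\|u\|_{L^2(0,T;L^2)}^2\big)^3$ in \eqref{SSEE}.

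With these uniform bounds I would extract weak-$\ast$ and weak limits and apply the Aubin--Lions--Simon lemma: boundedness of $m^N$ in $L^2(0,T;H^2)$ together with boundedness of $m^N_t$ in $L^{4/3}(0,T;L^2)$ gives, along a subsequence, strong convergence $m^N\to m$ in $L^2(0,T;H^{2-\varepsilon})$ and in $C([0,T];L^2)$. Since $H^{2-\varepsilon}(\Omega)\hookrightarrow L^\infty(\Omega)$ for $n\le 3$ and small $\varepsilon$, this suffices to pass to the limit in the nonlinear terms $m^N\times\Delta m^N$, $m^N\times u$ and $|m^N|^2m^N$ and to identify the initial datum, so that $m\in\mathcal{M}_S$ solves \eqref{NLP} almost everywhere; the estimate \eqref{SSEE} is inherited by weak lower semicontinuity.

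Finally, for uniqueness when $n=2$, let $m_1,m_2$ be two strong solutions with the same data, set $w=m_1-m_2$, and test the difference equation with $w$: the term $(w\times u)\cdot w$ drops, $-\langle|m_1|^2m_1-|m_2|^2m_2,w\rangle$ is controlled after expanding $|m_1|^2m_1-|m_2|^2m_2$ and using the $2$D Ladyzhenskaya inequality $\|v\|_{L^4}^2\lesssim\|v\|_{L^2}\|v\|_{H^1}$, and the exchange difference $m_1\times\Delta w+w\times\Delta m_2$ is handled by integrating by parts in $\langle m_1\times\Delta w,w\rangle=\langle w\times m_1,\Delta w\rangle$ (so only $\nabla w$ and $\nabla m_1$ survive) and by $|\langle w\times\Delta m_2,w\rangle|\le\|w\|_{L^4}^2\|\Delta m_2\|_{L^2}$. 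The key point is the $2$D estimate $\|\nabla m_1\|_{L^4}^2\lesssim\|m_1\|_{H^1}\|m_1\|_{H^2}$, which makes all the resulting Grönwall coefficients --- in particular $\|\nabla m_1\|_{L^4}^4\lesssim\|m_1\|_{H^1}^2\|m_1\|_{H^2}^2$ --- integrable in time; Grönwall then forces $w\equiv 0$. This step is genuinely two-dimensional: in $3$D the analogous coefficients involve $\|m_1\|_{H^2}^3$ or $\|\Delta m_2\|_{L^2}^4$, which are not integrable in time, which is why uniqueness of strong solutions is asserted only for $n=2$. \textbf{Main obstacle.} Closing the $H^1$-level estimate with a control that is merely $L^2$ in time and $L^s$ in space --- the coupling $m\times u$ tested against $\Delta m$ sits exactly at the Sobolev threshold and pins down the dimension-dependent exponent $s$ --- and then placing $m_t$ in precisely $L^{4/3}(0,T;L^2)$, which hinges on the Agmon-type interpolation of the exchange term; the $2$D uniqueness argument is the second delicate point, again for subcriticality reasons.
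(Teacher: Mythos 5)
Your proposal is correct and follows essentially the same route as the paper: a Galerkin scheme with Neumann eigenfunctions, the $L^2$ and $-\Delta m$ energy estimates with the dimension-dependent Sobolev pairing of $m\times u$ against $\Delta m$ fixing $s$, the Agmon-type interpolation placing $m_t$ in $L^{4/3}(0,T;L^2(\Omega))$, Aubin--Lions--Simon compactness, and a classical Gr\"onwall argument for $2$D uniqueness. The only cosmetic difference is in the limit passage (you use strong convergence in $L^2(0,T;H^{2-\varepsilon}(\Omega))$ with $H^{2-\varepsilon}(\Omega)\hookrightarrow L^\infty(\Omega)$, whereas the paper works with strong convergence in $L^2(0,T;H^1(\Omega))$ plus a weak-convergence term), and both are valid.
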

	
	\noindent The proof of this theorem is presented in subsection \ref{SS-EOSS}. Next we will show the existence of a regular solution for $n=2$ and $3$.

\begin{Rem} Hereafter,  we mainly concentrate only on dimensions $n=2$ and $3,$ though all the theories presented here also applies to the case $n=1.$
Further,  in the  1D case more precise results, such as first-order and second-order optimality conditions, can be obtained using strong solutions instead of regular solutions. This approach simplifies most of the results obtained for $n=2$ and $3$.
\end{Rem}	
	
	\begin{Thm}(Existence of Regular Solution for n=2,3)\label{T-RS} 
		Suppose the control $u\in L^2(0,T;H^1(\Omega))$ and the initial data $m_0$ satisfies the condition \eqref{IC}. Then for $n=2$, system \eqref{NLP} admits a unique global regular solution $m\in \mathcal{M}$. Moreover, there exist constants $M(m_0,u,\Omega,T,n)$ and $C(\Omega,T)>0$ such that the following estimate holds:
		\begin{equation}\label{SSEE2}
				\|m\|^2_{L^\infty(0,T;H^2(\Omega))} + \|m\|^2_{L^2(0,T;H^3(\Omega))} \leq 	M(m_0,u,\Omega,T),
		\end{equation}
		\begin{flalign*}
			\text{where}\ \ M(m_0,u,\Omega,T) &:= \left(\|m_0\|^2_{L^2(\Omega)}+\|\Delta m_0\|^2_{L^2(\Omega)}+\|u\|^2_{L^2(0,T;H^1(\Omega))}\right)&\nonumber\\
			&  \times  \exp\left\{C \left[1+\|m_0\|^2_{H^1(\Omega)} + \|u\|^2_{L^2(0,T;L^2(\Omega))} \right]^2 \ \exp\left\{C\left(\|u\|^2_{L^2(0,T;H^1(\Omega))}\right)\right\}\right\}.
		\end{flalign*}
		For $n=3$, system \eqref{NLP} admits a unique ``local in time" regular solution for every control in $L^2(0,T;H^1(\Omega))$. However, there exists a constant $\widetilde{C}(\Omega)>0$ such that under the following smallness assumption:\vspace{-0.2cm}
		\begin{equation}\label{SAOIC}
			 M(m_0,u,\Omega,T) < \frac{1}{\widetilde{C}^{\frac{1}{2}}},
		\end{equation}
		the solution exists globally and satisfies the energy estimate \eqref{SSEE2}.
	\end{Thm}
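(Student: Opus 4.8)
The plan is to promote the strong solution of Theorem~\ref{T-SS} to a regular solution by establishing a higher-order a priori estimate, carried out on a Galerkin approximation $m^k=\sum_{j=1}^k c_j(t)w_j$ built from the eigenfunctions $\{w_j\}$ of $-\Delta$ with homogeneous Neumann boundary conditions. Working in this basis is convenient because $\Delta^2 m^k$ lies in the span $\{w_1,\dots,w_k\}$, so it is an admissible test function, and all boundary terms arising when one integrates by parts against it vanish (both $\partial_\eta m^k$ and $\partial_\eta\Delta m^k$ are zero on the Galerkin level). First I would record that the $m^k$ inherit the bounds of \eqref{SSEE}, so $\|m^k\|_{L^\infty(0,T;H^1)}$, $\|m^k\|_{L^2(0,T;H^2)}$ and $\|m^k_t\|_{L^{4/3}(0,T;L^2)}$ are controlled by the data; these furnish the time-integrable lower-order quantities needed in the Gronwall step.

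The heart of the proof is the $H^2$-in-space estimate. Testing \eqref{NLP} (at the Galerkin level) with $\Delta^2 m^k$ and using the identities above gives
\begin{equation*}
\tfrac12\tfrac{d}{dt}\|\Delta m^k\|_{L^2}^2 + \|\nabla\Delta m^k\|_{L^2}^2 = -\int_\Omega \nabla\!\big(m^k\times\Delta m^k + m^k\times u - (1+|m^k|^2)m^k + u\big)\cdot\nabla\Delta m^k\,dx,
\end{equation*}
where the term $(m^k\times\nabla\Delta m^k)\cdot\nabla\Delta m^k$ drops out by antisymmetry. Each contribution on the right is then estimated by Gagliardo--Nirenberg and Young's inequalities so as to absorb a small multiple of $\|\nabla\Delta m^k\|_{L^2}^2$ into the left-hand side. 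For $n=2$ the relevant embeddings ($H^1\hookrightarrow L^p$, $p<\infty$, together with $\|f\|_{L^4}^2\lesssim\|f\|_{L^2}\|f\|_{H^1}$) are subcritical, so the remainders take the shape $C\,\psi(t)\,(1+\|\Delta m^k\|_{L^2}^2)$ with $\psi\in L^1(0,T)$ by the previous paragraph; Gronwall's lemma then yields the global bound \eqref{SSEE2}, the nested exponential in $M(m_0,u,\Omega,T)$ being inherited from the right-hand side of \eqref{SSEE}. For $n=3$ the worst contributions --- from $\int(\nabla m^k\times\Delta m^k)\cdot\nabla\Delta m^k$ and from the cubic term --- are critical and, after Gagliardo--Nirenberg, only give a differential inequality whose forcing is not in $L^1(0,T)$ unless $\|\Delta m^k\|_{L^2}$ is a priori small; this yields a maximal local-in-time bound, hence a local regular solution by a continuation argument, while under the smallness hypothesis \eqref{SAOIC} the quantity $\|\Delta m^k(t)\|_{L^2}$ stays below the critical threshold for all $t$ and \eqref{SSEE2} holds on $[0,T]$.

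Once the spatial bounds $m^k\in L^\infty(0,T;H^2)\cap L^2(0,T;H^3)$ are available, I would read the time regularity off the equation itself, bounding $\nabla m^k_t$ in $L^2(0,T;L^2)$ through $\nabla\Delta m^k$ and the nonlinear terms, each of which is controlled by the new estimates together with $H^2\hookrightarrow L^\infty$ and $H^2\hookrightarrow W^{1,6}$ (valid for $n\le 3$); thus $m^k_t\in L^2(0,T;H^1)$ uniformly. Passing to the limit $k\to\infty$ is then standard: the uniform bounds and the Aubin--Lions compactness lemma give strong convergence in $L^2(0,T;H^2)$, which suffices to pass to the limit in every nonlinear term, so the limit $m$ is a regular solution in $\mathcal{M}$ obeying \eqref{SSEE2}. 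For uniqueness I would take two regular solutions $m_1,m_2$, test the equation for $w=m_1-m_2$ with $w$ (and, if needed, $-\Delta w$), estimate the differences of the nonlinearities using the $L^\infty(0,T;H^2)$-bounds on $m_1,m_2$ (hence $L^\infty$-bounds on $m_i$ and $L^\infty(0,T;L^6)$-bounds on $\nabla m_i$ for $n\le 3$), and conclude by Gronwall.

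The principal obstacle is the $n=3$ case of the second step: the term $\int(\nabla m^k\times\Delta m^k)\cdot\nabla\Delta m^k$ --- and its analogue from $|m|^2m$ --- is genuinely borderline, and the Gagliardo--Nirenberg bookkeeping shows that its time-integrated remainder cannot be absorbed for arbitrary data; this is precisely why only a local regular solution is claimed in general and why the smallness assumption \eqref{SAOIC} enters for global existence. A secondary but delicate point, in the 2D case, is to arrange every remainder against a factor that genuinely belongs to $L^1(0,T)$ --- which is what forces the iterated exponentials of \eqref{SSEE} into $M(m_0,u,\Omega,T)$ --- and to keep the Neumann boundary terms under control throughout, something the Laplacian eigenbasis handles automatically.
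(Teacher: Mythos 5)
Your proposal follows essentially the same route as the paper: a Galerkin scheme in the Neumann--Laplacian eigenbasis, testing with $\Delta^2 m_n$, absorbing $\|\nabla\Delta m_n\|_{L^2}^2$ via Gagliardo--Nirenberg and Young, Gronwall with the $L^1(0,T)$ coefficients inherited from the strong-solution bounds for $n=2$, a Riccati-type local bound plus a continuity/bootstrap argument under \eqref{SAOIC} for $n=3$, and Aubin--Lions to pass to the limit. The only minor inaccuracy is attributing criticality in 3D also to the cubic term $(1+|m|^2)m$ — in fact that term is handled identically in both dimensions via $H^1\hookrightarrow L^6$, and the sole obstruction is the term $\int(\nabla m_n\times\Delta m_n)\cdot\nabla\Delta m_n\,dx$ — but this does not affect the validity of your argument.
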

	\noindent 	The proof of Theorem \ref{T-RS} is given in subsection \ref{SS-EORS}.



	Furthermore, we will define our admissible control set over which we will analyze the optimal control problem. For any $a_i,b_i \in L^2(0,T)$, $i=1,2,...,N$, consider the set
$$\mathbb{U}_{a,b}:=\Big\{U=(U_1,U_2,...,U_N)\in L^2(0,T;\mathbb{R}^N)\ \big|\ a_i(t)\leq U_i(t)\leq b_i(t)\ \ \ \ \text{for } i=1,2,...,N\Big\}.$$
	Since, the existence of regular solution (cf. Theorem \ref{T-RS}) for $n=3$ is only known for small initial data and control, 
	let us define the control set 
	$$\mathbb{U}_{sa}:= \Big\{ U=(U_1,U_2,...,U_N)\in L^2(0,T;\mathbb{R}^N) \ \big| \ M(m_0,\zetaup(U),\Omega,T)\leq  \frac{1}{2\widetilde{C}^{\frac{1}{2}}} \ \text{with sufficiently small}\ m_0\Big\}.$$
	Thus, we work with the following admissible class of controls
	$
	\mathbb{U}_{ad}=\begin{cases}
		\mathbb{U}_{a,b} \ \ \ &\text{for} \ n=2,\\
		\mathbb{U}_{a,b}\cap\mathbb{U}_{sa} \ &\text{for} \ n=3.
	\end{cases} 	
	$
	
	We will demonstrate the Fr\'echet differentiability of the control-to-state and the control-to-costate operators over an open set $\mathbb{U}_R$ given by 
	$$\mathbb{U}_R:=\left\{U=(U_1,U_2,...,U_N)\in L^2(0,T;\mathbb{R}^N)\ \big| \  \|U\|_{L^2(0,T;\mathbb{R}^N)}=\sum_{i=1}^N \left(\int_0^T |U_i(t)|^2\ dt\right)^{\frac{1}{2}}  <R \right\}.$$
	For $n=2$, the constant $R$ can be taken as $R=\|a\|_{L^2(0,T;\mathbb{R}^N)}+\|b\|_{L^2(0,T;\mathbb{R}^N)}+1$. However, the global solvability of regular solutions of \eqref{NLP} for $n=3$ is only guaranteed under the smallness assumption. Therefore, for $n=3$, with appropriate small initial data $m_0$, the value of the constant $R$ can be choosen such that  $M(m_0,\zetaup(U),\Omega,T)<\frac{1}{\widetilde{C}^{\frac{1}{2}}}$.    
		
	
	
	The solvability of regular solutions for system \eqref{NLP} prompts the definition of an operator that associates each control in the set $\mathbb{U}_R$ with the admissible solution space $\mathcal{M}$, namely, the control-to-state operator, denoted as $G:\mathbb{U}_R\to \mathcal{M}$.

	An admissible pair $(m,U)$ is defined as a solution to system \eqref{NLP-EV} where $m$ belongs to $\mathcal{M}$ and $U$ belongs to $\mathbb{U}_{ad}$. The set of all admissible pairs is denoted as $\mathcal{A} \subseteq  \mathcal{M} \times \mathbb{U}_{ad}$.  In order to do  the analysis for $n=3,$   we  assume that there exists at least one control $U\in \mathbb{U}_{a,b}\cap \mathbb{U}_{sa}$ such that there exists a regular solution corresponding to $U,$  that is,  $\mathcal A\neq \emptyset.$
	The optimal control problem is stated as follows:
	\begin{equation*}\label{OCP}
		\text{(OCP)}\begin{cases}
			\text{minimize}\ \mathcal J(m,U),\\
			(m,U) \in \mathcal{A}.
		\end{cases}	
	\end{equation*}

	\begin{Thm}\label{T-EOOC}
		Suppose the initial data $m_0$ satisfies \eqref{IC}, and in the case of $n=3$, \eqref{SAOIC} holds true. Then there exists a solution $(\widetilde{m}, \widetilde{U})\in \mathcal{M} \times \mathbb{U}_{ad}$ for the optimal control problem (OCP).
		
	\end{Thm}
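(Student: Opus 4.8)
The plan is to use the direct method of the calculus of variations. First I would observe that the cost functional $\mathcal J$ is bounded below by $0$, so the infimum $j:=\inf_{(m,U)\in\mathcal A}\mathcal J(m,U)$ is finite (and $\mathcal A\neq\emptyset$ by hypothesis, which for $n=3$ is where \eqref{SAOIC} and the definition of $\mathbb U_{sa}$ are used). Pick a minimizing sequence $(m_k,U_k)\in\mathcal A$ with $\mathcal J(m_k,U_k)\to j$. From the last term $\tfrac12\sum_i\|U_{k,i}\|^2_{L^2(0,T)}$ in \eqref{CF-2} we get that $(U_k)$ is bounded in $L^2(0,T;\mathbb R^N)$; moreover each $U_k\in\mathbb U_{ad}$, and for $n=2$ this forces $U_k\in\mathbb U_{a,b}$ while for $n=3$ also $U_k\in\mathbb U_{sa}$, so in either case $(U_k)$ stays in a bounded set on which the energy estimate \eqref{SSEE2} of Theorem \ref{T-RS} is uniform: there is a constant $M^*$ with $\|m_k\|^2_{L^\infty(0,T;H^2(\Omega))}+\|m_k\|^2_{L^2(0,T;H^3(\Omega))}\le M^*$ and, going back to the equation \eqref{NLP-EV}, a uniform bound on $\|\partial_t m_k\|_{L^2(0,T;H^1(\Omega))}$. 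Hence $(m_k)$ is bounded in $\mathcal M=W^{1,2}(0,T;H^3(\Omega),H^1(\Omega))$.

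Next I would extract a subsequence (not relabelled) such that $U_k\rightharpoonup\widetilde U$ in $L^2(0,T;\mathbb R^N)$, $m_k\rightharpoonup\widetilde m$ in $L^2(0,T;H^3(\Omega))$, $\partial_t m_k\rightharpoonup\partial_t\widetilde m$ in $L^2(0,T;H^1(\Omega))$, and, by the Aubin–Lions–Simon lemma applied with $H^3(\Omega)\hookrightarrow\hookrightarrow H^2(\Omega)\hookrightarrow H^1(\Omega)^*$ (and likewise with $H^3\hookrightarrow\hookrightarrow C(\bar\Omega)$ type embeddings in dimensions $n\le 3$), $m_k\to\widetilde m$ strongly in $L^2(0,T;H^2(\Omega))$ and in $C([0,T];H^1(\Omega))$, hence $m_k(\cdot,T)\to\widetilde m(\cdot,T)$ in $L^2(\Omega)$ and $m_k\to\widetilde m$ a.e. in $\Omega_T$. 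The strong $L^2(0,T;H^2)$ and pointwise convergence is precisely what is needed to pass to the limit in the nonlinear terms: $m_k\times\Delta m_k\rightharpoonup\widetilde m\times\Delta\widetilde m$ (strong times weak), $|m_k|^2 m_k\to|\widetilde m|^2\widetilde m$ (dominated convergence using the uniform $L^\infty$-in-$x$ bound from $H^2\hookrightarrow L^\infty$ for $n\le 3$), $m_k\times\zetaup(U_k)\rightharpoonup\widetilde m\times\zetaup(\widetilde U)$ since $\zetaup$ is linear bounded hence weakly continuous and $m_k\to\widetilde m$ strongly in $L^2(\Omega_T)$, and the linear terms pass trivially. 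Therefore $\widetilde m=G(\widetilde U)$ solves \eqref{NLP-EV}; since $\widetilde U$ is a weak limit of elements of the closed convex set $\mathbb U_{a,b}$ it lies in $\mathbb U_{a,b}$, and for $n=3$ the functional $U\mapsto M(m_0,\zetaup(U),\Omega,T)$ is (weakly sequentially lower semicontinuous, being built from norms composed with the linear map $\zetaup$ and monotone exponentials), so $\widetilde U\in\mathbb U_{sa}$ as well; thus $(\widetilde m,\widetilde U)\in\mathcal A$.

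Finally I would invoke weak lower semicontinuity of $\mathcal J$: each of the three terms in \eqref{CF-2} is a continuous convex functional of $(m,U)$ in the relevant weak topologies — the first via $m_k\to\widetilde m$ strongly in $L^2(\Omega_T)$ (so that term actually converges), the second via $m_k(\cdot,T)\to\widetilde m(\cdot,T)$ in $L^2(\Omega)$ (again convergence), and the third via weak lower semicontinuity of the norm, $\liminf_k\sum_i\|U_{k,i}\|^2_{L^2(0,T)}\ge\sum_i\|\widetilde U_i\|^2_{L^2(0,T)}$. Combining, $\mathcal J(\widetilde m,\widetilde U)\le\liminf_k\mathcal J(m_k,U_k)=j$, and since $(\widetilde m,\widetilde U)\in\mathcal A$ we also have $\mathcal J(\widetilde m,\widetilde U)\ge j$, so equality holds and $(\widetilde m,\widetilde U)$ is optimal. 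The main obstacle is the nonlinear passage to the limit in $m_k\times\Delta m_k$ and $|m_k|^2m_k$: this is where the uniform bound from Theorem \ref{T-RS} (which for $n=3$ is only available because of the smallness assumption \eqref{SAOIC} and the structure of $\mathbb U_{sa}$) together with Aubin–Lions compactness is indispensable; the box constraints $\mathbb U_{a,b}$ being closed and convex (hence weakly closed) is the other point that must be checked but is routine.
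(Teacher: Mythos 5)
Your proposal is correct and follows essentially the same route as the paper: the direct method with a minimizing sequence, uniform bounds from the cost functional and the energy estimate \eqref{SSEE2}, weak/weak$^*$ compactness plus Aubin--Lions--Simon to pass to the limit in the nonlinear terms, weak closedness of $\mathbb{U}_{a,b}$ (and lower semicontinuity of the norm for $\mathbb{U}_{sa}$ when $n=3$), and weak lower semicontinuity of $\mathcal{J}$. No substantive differences from the paper's argument.
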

	For the proof of Theorem \ref{T-EOOC}, consult subsection \ref{S-FOOC}. Next, we proceed to establish a first-order necessary optimality condition for the problem OCP in the form of a variational inequality by employing the classical adjoint problem approach. Utilizing the formal Lagrange method \cite{FT}, we derive the adjoint system:
	\begin{equation}\label{AS}
		\begin{cases}
			\phi_t + \Delta \phi  + \Delta (\phi \times \widetilde{m})+(\Delta  \widetilde{m}\times \phi)-\big(\phi \times \zetaup(\widetilde{U})\big) -\left(1+|\widetilde{m}|^2\right)\phi -2\ \big(\widetilde{m}\cdot \phi\big) \ \widetilde{m} =-\big(\widetilde{m}-m_d\big) \ \ \ \text{in}\ \Omega_T,\vspace{0.2cm}\\
			\frac{\partial \phi}{\partial \eta}=0 \ \ \ \text{in}\ \partial \Omega_T,\vspace{0.1cm}\\
			\phi(T)=\widetilde{m}(x,T)-m_{\Omega}(x)\ \ \ \text{in} \ \Omega.
		\end{cases}
	\end{equation}
	
	
	\noindent	Consequently, the subsequent step involves defining the weak formulation of the adjoint problem.
	
	
	\begin{Def}[Weak formulation]\label{AWSD}
		A function $\phi \in \mathcal{Z}$ is said to be a weak solution of system \eqref{AS} if for every $\vartheta \in L^2(0,T;H^1(\Omega))$, the following holds:
		\begin{enumerate}[(i)]
			\item $\begin{aligned}[t]
				&\int_0^T \langle \phi'(t),\vartheta (t)\rangle_{H^1(\Omega)^*\times H^1(\Omega)}dt - \int_{\Omega_T} \nabla \phi \cdot \nabla  \vartheta\ dx \ dt- \int_{\Omega_T}\nabla (\phi \times \widetilde{m})\cdot \nabla  \vartheta\ dx \ dt\nonumber\\
				&\hspace{1cm}+ \int_{\Omega_T}(\Delta \widetilde{m}\times \phi)\cdot \vartheta\ dx\ dt-\int_{\Omega_T} \big(\phi \times \zetaup(\widetilde{U})\big)\cdot \vartheta\ dx\ dt- \int_{\Omega_T} \left( 1 + |\widetilde{m}|^2 \right) \phi\cdot \vartheta\  dx\ dt\nonumber\\
				&\hspace{1cm}	 -2\int_{\Omega_T} \big((\widetilde{m} \cdot \phi)\ \widetilde{m}\big)\cdot \vartheta \ dx\ dt =-\int_{\Omega_T} \big(\widetilde{m}-m_d\big)\cdot \vartheta\ dx\ dt,  
			\end{aligned}$\vspace{0.1cm}
			\item $\phi(T)=\widetilde{m}(T)-m_{\Omega}.$
		\end{enumerate}
	\end{Def}

	\begin{Thm}\label{T-AWS}
		Let $(\widetilde{m},\widetilde{U})\in \mathcal{A}$ be an admissible pair for OCP. Then there exists a unique weak solution $\phi \in \mathcal{Z}$ for the adjoint system \eqref{AS} in the sense of Definition \ref{AWSD}\ , which satisfies the following estimate:
		\begin{flalign}\label{AEEE}
			\|\phi\|^2_{L^{\infty}(0,T;L^2(\Omega))}&+ \|\phi\|^2_{L^2(0,T;H^1(\Omega))} + \|\phi_t\|^2_{L^2(0,T;H^1(\Omega)^*)}\leq \left(\|\widetilde{m}(T)-m_\Omega\|^2_{L^2(\Omega)} +  \| \widetilde{m}- m_d\|^2_{L^2(0,T;L^2(\Omega))}\right)\nonumber\\
			& \times \ \exp\bigg\{C  \left(1+ \|\widetilde{m}\|^2_{L^2(0,T;H^3(\Omega))} + \|\widetilde{m}\|^4_{L^\infty(0,T;H^2(\Omega))}+\|\zetaup(\widetilde{U})\|^2_{L^2(0,T;H^1(\Omega))} \right) \bigg\}.	
		\end{flalign}
	\end{Thm}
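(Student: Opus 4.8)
The plan is to establish existence by a Faedo--Galerkin approximation after reversing time, and to obtain uniqueness directly from linearity together with the energy estimate. First I would introduce the time-reversed unknown $\psi(t):=\phi(T-t)$, which converts \eqref{AS} into a \emph{forward} linear parabolic system for $\psi$ with the favorable sign $-\Delta\psi$, whose initial datum $\psi(0)=\widetilde m(T)-m_\Omega$ lies in $L^2(\Omega)$ (recall $\widetilde m\in\mathcal M\hookrightarrow C([0,T];H^2(\Omega))$, so $\widetilde m(T)$ is well defined), and whose coefficients $\widetilde m(T-\cdot)$ and $\zetaup(\widetilde U)(T-\cdot)$ retain the regularity of $\widetilde m$ and $\zetaup(\widetilde U)$. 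Since the whole system is \emph{linear} in the unknown, no compactness of the nonlinearity is needed to pass to the limit: weak and weak-$*$ convergence of the Galerkin approximants suffice (Aubin--Lions may be invoked, but is not essential).

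Next I would set up the Galerkin scheme on the eigenbasis $\{e_j\}$ of the Neumann Laplacian (orthogonal in both $L^2(\Omega)$ and $H^1(\Omega)$); the finite-dimensional system is a linear ODE, hence globally solvable on $[0,T]$. The heart of the argument is the uniform a priori bound, obtained by testing the equation with $\psi$ itself (i.e.\ with $\phi$). Here the cross-product structure does most of the work: the principal contribution of $\Delta(\phi\times\widetilde m)$ integrates by parts to $-\int\nabla(\phi\times\widetilde m)\cdot\nabla\phi$, whose leading piece $\int(\nabla\phi\times\widetilde m)\cdot\nabla\phi$ vanishes pointwise because $(a\times b)\cdot a=0$, leaving only $\int(\phi\times\nabla\widetilde m)\cdot\nabla\phi$; likewise $\int(\Delta\widetilde m\times\phi)\cdot\phi=0$ and $\int(\phi\times\zetaup(\widetilde U))\cdot\phi=0$, while $-\int(1+|\widetilde m|^2)|\phi|^2\le 0$ and $-2\int(\widetilde m\cdot\phi)^2\le 0$ carry the right sign. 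The surviving term is controlled by $\|\phi\|_{L^3}\|\nabla\widetilde m\|_{L^6}\|\nabla\phi\|_{L^2}$; using $\nabla\widetilde m\in L^\infty(0,T;L^6(\Omega))$ (from $\widetilde m\in L^\infty(0,T;H^2(\Omega))$, valid for $n\le 3$), the Gagliardo--Nirenberg interpolation $\|\phi\|_{L^3}\le C\|\phi\|_{L^2}^{1/2}\|\phi\|_{H^1}^{1/2}$, and Young's inequality (exponents $4,4/3$), one absorbs a fraction of $\|\nabla\phi\|_{L^2}^2$ into the diffusion term and is left with a Gronwall inequality for $\|\phi(t)\|_{L^2}^2$ whose coefficient is $\|\nabla\widetilde m\|_{L^6}^4\lesssim\|\widetilde m\|_{H^2}^4$. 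Integrating yields the bounds on $\|\phi\|_{L^\infty(0,T;L^2)}$ and $\|\phi\|_{L^2(0,T;H^1)}$ with the exponential constant displayed in \eqref{AEEE}, the power $\|\widetilde m\|_{L^\infty(0,T;H^2)}^4$ coming precisely from this Young step.

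Then I would bound $\phi_t$ (equivalently $\psi_t$) in $L^2(0,T;H^1(\Omega)^*)$ by reading it off the equation in duality: $\Delta\phi$ and $\Delta(\phi\times\widetilde m)$ contribute terms controlled by $\|\phi\|_{H^1}$ and by $\|\nabla(\phi\times\widetilde m)\|_{L^2}\lesssim\|\widetilde m\|_{L^\infty}\|\nabla\phi\|_{L^2}+\|\nabla\widetilde m\|_{L^6}\|\phi\|_{L^3}$ (both square-integrable in time by the previous step), $\Delta\widetilde m\times\phi$ is estimated by $\|\Delta\widetilde m\|_{L^6}\|\phi\|_{L^2}\|v\|_{L^3}$ using $\widetilde m\in L^2(0,T;H^3(\Omega))$ (this is where the term $\|\widetilde m\|_{L^2(0,T;H^3)}^2$ of \eqref{AEEE} enters), and the remaining zeroth-order terms are immediate since $\widetilde m\in L^\infty(\Omega_T)$ and $\zetaup(\widetilde U)\in L^2(0,T;H^1(\Omega))\hookrightarrow L^2(0,T;L^6(\Omega))$. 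With these uniform bounds, standard weak compactness extracts a limit $\phi\in\mathcal Z$; linearity of every term lets us pass to the limit in the weak formulation of Definition \ref{AWSD}, and the embedding $\mathcal Z\hookrightarrow C([0,T];L^2(\Omega))$ makes the terminal condition $\phi(T)=\widetilde m(T)-m_\Omega$ meaningful and inherited from the approximants' initial data. Uniqueness is then free: the difference of two weak solutions solves the same system with zero source and zero terminal datum, so estimate \eqref{AEEE} forces it to vanish.

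I expect the main obstacle to be the careful bookkeeping of the low-order-but-nonzero term $\int(\phi\times\nabla\widetilde m)\cdot\nabla\phi$ together with the term $\Delta\widetilde m\times\phi$: these are exactly the places where the favorable antisymmetry of the cross product is unavailable, so one must balance interpolation and Young's inequality so as to simultaneously close the estimate and produce the precise dependence on $\|\widetilde m\|_{L^\infty(0,T;H^2)}$ and $\|\widetilde m\|_{L^2(0,T;H^3)}$ recorded in \eqref{AEEE} --- and it is precisely here that the \emph{regular} (rather than merely strong) regularity of $\widetilde m$, guaranteed by Theorem \ref{T-RS}, is indispensable. Everything else --- solvability of the Galerkin ODE, the weak limits, and the duality estimates for $\phi_t$ --- is routine once these two terms are under control.
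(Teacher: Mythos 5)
Your proposal is correct and follows essentially the same route as the paper, which proves this theorem as the special case $g=-(\widetilde m-m_d)$ of Lemma \ref{AL-SLS}: a Faedo--Galerkin scheme, an energy estimate obtained by testing with $\phi$ in which the cross-product terms vanish by $a\cdot(a\times b)=0$ and the cubic terms carry a favorable sign, Gronwall (run backwards from $T$, which is what your time-reversal accomplishes), a duality bound on $\phi_t$ in $L^2(0,T;H^1(\Omega)^*)$, and weak compactness plus linearity to pass to the limit and to get uniqueness. The only cosmetic differences are your $L^3$--$L^6$--$L^2$ H\"older/Gagliardo--Nirenberg treatment of $\int(\phi\times\nabla\widetilde m)\cdot\nabla\phi$ (the paper uses $H^1\hookrightarrow L^4$ and $H^2\hookrightarrow L^\infty$, yielding a coefficient $\|\widetilde m\|^2_{H^3}$ there, with the quartic term in \eqref{AEEE} arising instead from the zeroth-order terms in the $\phi_t$ estimate), which do not change the argument.
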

	\noindent The proof of this theorem constitutes a special case of Lemma \ref{AL-SLS} by substituting $g=-\big(\widetilde{m}-m_d\big)$.
	
	Now we are ready to state a first-order optimality condition satisfied by the optimal control $\widetilde{U}\in \mathbb{U}_{ad}$.
	
	\begin{Thm}\label{FOOCT}
		Let $\widetilde{U} \in \mathbb{U}_{ad}$ be  the optimal control for the OCP with the associated state $\widetilde{m}$. If $\phi \in \mathcal{Z}$ is the weak solution of the adjoint system \eqref{AS} corresponding to the admissible pair $(\widetilde{m}, \widetilde{U})$, then the triplet $(\phi, \widetilde{m}, \widetilde{U})$ satisfies the following variational inequality: 
		\begin{equation}\label{FOOC}
			\sum_{i=1}^N\int_0^T  \widetilde{U_i}\  \big(U_i-\widetilde{U_i} \big)\  dt + \sum_{i=1}^N \int_0^T \big(U_i-\widetilde{U}_i\big)  \int_{\Omega} \big( \phi \times \widetilde{m}+  \phi\big) \cdot B_i \ dx\ dt \geq 0, \ \ \ \ \  \forall \ (U_1,...,U_N) \in \mathbb{U}_{ad}.	
		\end{equation}
	\end{Thm}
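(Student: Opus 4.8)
The plan is to combine the convex-perturbation principle for the reduced cost functional with the classical adjoint calculus. Introduce the reduced functional $j:\mathbb{U}_R\to\mathbb R$ by $j(U):=\mathcal J(G(U),U)$, where $G:\mathbb{U}_R\to\mathcal M$ is the control-to-state operator. Since $G$ is Fr\'echet differentiable on $\mathbb{U}_R$ (Section \ref{S-FOOC}) and $\mathcal J$ is a smooth quadratic functional of $(m,U)$, the map $j$ is Fr\'echet differentiable, and the chain rule gives, for every direction $h=(h_1,\dots,h_N)\in L^2(0,T;\mathbb R^N)$,
\[
j'(\widetilde U)h=\int_{\Omega_T}(\widetilde m-m_d)\cdot w\,dx\,dt+\int_\Omega(\widetilde m(T)-m_\Omega)\cdot w(T)\,dx+\sum_{i=1}^N\int_0^T\widetilde U_i\,h_i\,dt ,
\]
where $\widetilde m=G(\widetilde U)$ and $w:=G'(\widetilde U)h$ (which, by the differentiability analysis of $G$, lies in $\mathcal M$) solves the linearized state equation, with $w(0)=0$ and homogeneous Neumann data,
\[
w_t-\Delta w=w\times\Delta\widetilde m+\widetilde m\times\Delta w+w\times\zetaup(\widetilde U)+\widetilde m\times\zetaup(h)-\big(1+|\widetilde m|^2\big)w-2(\widetilde m\cdot w)\widetilde m+\zetaup(h).
\]
Because $\widetilde U$ is optimal and $\mathbb{U}_{ad}$ is convex and bounded — for $n=2$, $\mathbb{U}_{ad}=\mathbb{U}_{a,b}\subset\mathbb{U}_R$; for $n=3$, the strict margin $\frac{1}{2\widetilde{C}^{\frac{1}{2}}}<\frac{1}{\widetilde{C}^{\frac{1}{2}}}$ in the definition of $\mathbb{U}_{sa}$ together with the continuity of $U\mapsto M(m_0,\zetaup(U),\Omega,T)$ ensures that $\widetilde U+\lambda(U-\widetilde U)$ remains in $\mathbb{U}_R$ for all small $\lambda>0$ — the scalar map $\lambda\mapsto j(\widetilde U+\lambda(U-\widetilde U))$ is well defined near $0$ and minimized at $\lambda=0$, whence $j'(\widetilde U)(U-\widetilde U)\ge 0$ for every $U\in\mathbb{U}_{ad}$.

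The core step is to rewrite the two state-dependent terms of $j'(\widetilde U)h$ in terms of $h$ alone, using the adjoint state $\phi\in\mathcal Z$ of Theorem \ref{T-AWS}. I would test the weak adjoint formulation (Definition \ref{AWSD}) with $\vartheta=w$ and simultaneously pair the linearized equation with $\phi$, integrating by parts the term $\widetilde m\times\Delta w$ against $\phi$ — legitimate since $\partial_\eta w=0$ on $\partial\Omega$ and $w\in\mathcal M$, $\phi\in\mathcal Z$ — so that it turns into $-\int_{\Omega_T}\nabla w:\nabla(\phi\times\widetilde m)$, matching the corresponding term in Definition \ref{AWSD}. Invoking the cyclic identity $(a\times b)\cdot c=a\cdot(b\times c)$ for the cross-product terms and the symmetry of the remaining bilinear terms, all the terms coupling $w$ and $\phi$ through the state and adjoint operators cancel in pairs, while the time-derivative terms combine through the integration-by-parts-in-time identity
\[
\int_0^T\langle w_t,\phi\rangle\,dt+\int_0^T\langle\phi_t,w\rangle\,dt=\int_\Omega w(T)\cdot\phi(T)\,dx-\int_\Omega w(0)\cdot\phi(0)\,dx=\int_\Omega w(T)\cdot\big(\widetilde m(T)-m_\Omega\big)\,dx ,
\]
using $w(0)=0$ and the terminal condition $\phi(T)=\widetilde m(T)-m_\Omega$. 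What survives is the duality identity
\[
\int_\Omega(\widetilde m(T)-m_\Omega)\cdot w(T)\,dx+\int_{\Omega_T}(\widetilde m-m_d)\cdot w\,dx\,dt=\int_{\Omega_T}\big(\widetilde m\times\zetaup(h)+\zetaup(h)\big)\cdot\phi\,dx\,dt .
\]

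Substituting this into the formula for $j'(\widetilde U)h$, unfolding $\zetaup(h)=\sum_{i=1}^N h_i(t)B_i(x)$, and using $(\widetilde m\times\zetaup(h))\cdot\phi=\zetaup(h)\cdot(\phi\times\widetilde m)$, one obtains
\[
j'(\widetilde U)h=\sum_{i=1}^N\int_0^T\widetilde U_i\,h_i\,dt+\sum_{i=1}^N\int_0^T h_i(t)\int_\Omega(\phi\times\widetilde m+\phi)\cdot B_i\,dx\,dt .
\]
Taking $h=U-\widetilde U$ and invoking $j'(\widetilde U)(U-\widetilde U)\ge 0$ for all $U\in\mathbb{U}_{ad}$ yields precisely the variational inequality \eqref{FOOC}.

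The step I expect to be the main obstacle is the adjoint duality computation: one must verify that $w=G'(\widetilde U)h$ is an admissible test function in Definition \ref{AWSD} (it is, since $w\in\mathcal M\hookrightarrow L^2(0,T;H^1(\Omega))$ with $w_t\in L^2(0,T;H^1(\Omega))\subset L^2(0,T;H^1(\Omega)^*)$), that the spatial integration by parts transferring $\Delta$ from $w$ onto $\phi\times\widetilde m$ is justified within the $H^1(\Omega)^*\times H^1(\Omega)$ pairing, and that all the quadratic and cubic products that appear — for instance $(\Delta\widetilde m\times\phi)\cdot w$ and $(\widetilde m\cdot w)(\widetilde m\cdot\phi)$ — are integrable on $\Omega_T$, which relies on $\widetilde m\in\mathcal M\hookrightarrow L^\infty(0,T;H^2(\Omega))\cap L^2(0,T;H^3(\Omega))$ and the Sobolev embeddings available for $n=2,3$. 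A secondary point, relevant only for $n=3$, is confirming that the perturbed controls remain in the domain $\mathbb{U}_R$ of $G$ for small $\lambda$; this follows from the continuity of $M(m_0,\zetaup(\cdot),\Omega,T)$ and the strict inequality in the definition of $\mathbb{U}_{sa}$.
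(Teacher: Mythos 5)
Your proposal is correct and follows essentially the same route as the paper: Fr\'echet differentiability of the control-to-state operator, the convexity inequality $\mathcal I'(\widetilde U)[U-\widetilde U]\ge 0$, and the duality computation obtained by testing the weak adjoint formulation with $z=G'(\widetilde U)[U-\widetilde U]$ while pairing the linearized system with $\phi$ and integrating by parts in time. The cancellations and the final identity $\mathcal I'(\widetilde U)[v]=\sum_i\int_0^T\widetilde U_i v_i\,dt+\int_{\Omega_T}(\phi\times\widetilde m+\phi)\cdot\zetaup(v)\,dx\,dt$ agree with the paper's derivation.
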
		
	\noindent The proof of this theorem is provided in subsection \ref{SS-FOOC}. The following characterization of the optimal control is a direct consequence of \eqref{FOOC} from Theorem \ref{FOOCT}.
	
		\begin{Cor}
		Let $\widetilde{U}\in \mathbb{U}_{ad}$ be an optimal control of the minimization problem OCP and $\widetilde{m}\in \mathcal{M}$ be the corresponding state variable. Then the optimal control can be characterized by the projection formula 
		\begin{equation}\label{PF}
			\widetilde{U}_i(t)=\mathcal{P}_{[a_i(t),b_i(t)]}\left(-\int_{\Omega} \big(\phi \times \widetilde{m}+\phi\big) \cdot B_i\ dx\right)\ \ \ \text{for a.e.}\ t\in[0,T]\ \ \text{and}\ i=1,2,...,N,
		\end{equation}
		where the projection operator is defined as
		$$\mathcal{P}_{[\alpha,\beta]}(x)=min\ \big\{\ \beta,max\ \big\{\alpha,x\ \big\}\big\}\ \ \ \ \ \ \text{for}\ \  \alpha,\beta,x\in \mathbb{R}\ \ \  \text{with}\ \ \alpha\leq \beta. $$
	\end{Cor}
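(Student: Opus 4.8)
The plan is to deduce the projection formula \eqref{PF} directly from the first-order variational inequality \eqref{FOOC}, by converting it into a pointwise-in-time inequality for each coordinate of $\widetilde{U}$ and then recognizing that inequality as the defining property of the metric projection onto $[a_i(t),b_i(t)]$.

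First I would abbreviate $d_i(t):=\int_\Omega\big(\phi\times\widetilde{m}+\phi\big)\cdot B_i\,dx$. Since $\phi\in\mathcal{Z}\hookrightarrow L^2(0,T;H^1(\Omega))$, $\widetilde{m}\in\mathcal{M}\hookrightarrow L^\infty(0,T;H^2(\Omega))$ and $B_i\in H^1(\Omega)$, Hölder's inequality together with the Sobolev embeddings $H^1(\Omega)\hookrightarrow L^6(\Omega)$ and $H^2(\Omega)\hookrightarrow L^\infty(\Omega)$ (valid for $n\le 3$) give $d_i\in L^2(0,T)$, so that \eqref{FOOC} reads
$$\sum_{i=1}^N\int_0^T\big(\widetilde{U}_i(t)+d_i(t)\big)\big(U_i(t)-\widetilde{U}_i(t)\big)\,dt\ \ge\ 0\qquad\text{for all }U\in\mathbb{U}_{ad}.$$
Next I would localize: fix an index $i$ and a measurable set $E\subseteq[0,T]$, and test with $U_j=\widetilde{U}_j$ for $j\ne i$ and with $U_i$ equal to $\widetilde{U}_i$ on $[0,T]\setminus E$ and to a chosen value $v\in[a_i(t),b_i(t)]$ on $E$. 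For $n=2$ this $U$ automatically lies in $\mathbb{U}_{ad}=\mathbb{U}_{a,b}$; for $n=3$ one must in addition keep $U\in\mathbb{U}_{sa}$, which is possible when $E$ (hence the perturbation) is small, because $\zetaup$ is bounded by \eqref{PEST} and $U\mapsto M(m_0,\zetaup(U),\Omega,T)$ is continuous, so a small $L^2$-perturbation of $\widetilde{U}$ stays below the threshold $1/(2\widetilde{C}^{1/2})$. With such a test control the sum collapses to $\int_E\big(\widetilde{U}_i(t)+d_i(t)\big)\big(v-\widetilde{U}_i(t)\big)\,dt\ge 0$, and shrinking $E$ to a Lebesgue point of the integrand yields, for a.e.\ $t\in[0,T]$,
$$\big(\widetilde{U}_i(t)+d_i(t)\big)\big(v-\widetilde{U}_i(t)\big)\ \ge\ 0\qquad\text{for all }v\in[a_i(t),b_i(t)].$$

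Finally I would invoke the elementary characterization: for real $\alpha\le\beta$ and $z\in\mathbb{R}$, a point $p\in[\alpha,\beta]$ satisfies $(p-z)(v-p)\ge 0$ for every $v\in[\alpha,\beta]$ if and only if $p=\mathcal{P}_{[\alpha,\beta]}(z)=\min\{\beta,\max\{\alpha,z\}\}$; applying this with $z=-d_i(t)$, $\alpha=a_i(t)$, $\beta=b_i(t)$, $p=\widetilde{U}_i(t)$ gives exactly \eqref{PF}. (Equivalently, one can argue directly by cases according to whether $\widetilde{U}_i(t)$ is interior to, or sits at an endpoint of, $[a_i(t),b_i(t)]$.) I expect the only genuinely delicate point to be the localization/Lebesgue-point step, and — in the three-dimensional case — the verification that the localized test controls can be kept inside the smallness set $\mathbb{U}_{sa}$; everything else is routine.
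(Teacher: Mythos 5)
Your argument is correct and is exactly the standard derivation the paper has in mind: the paper offers no proof, declaring \eqref{PF} a ``direct consequence'' of the variational inequality \eqref{FOOC}, and your localization to Lebesgue points followed by the elementary characterization of the projection onto $[a_i(t),b_i(t)]$ is the canonical way to make that rigorous. Your extra care with the $n=3$ case (keeping the localized test controls inside $\mathbb{U}_{sa}$) addresses a subtlety the paper silently ignores, though note it only works if $\widetilde{U}$ lies strictly below the threshold defining $\mathbb{U}_{sa}$.
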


	Next, we will explore a local second-order optimality condition that the optimal control adheres to. While the first-order necessary optimality condition, encapsulated by the variational inequality \eqref{FOOC}, offers valuable insights into the optimality of a control, it is the second-order condition that provides a more clear understanding of the local behavior around an optimal solution. The condition $\frac{\partial^2 \mathcal{J}}{\partial U^2}(\widetilde{U})[h,h]>0$ for all directions $h\in L^2(0,T;\mathbb{R}^N) \ \backslash \ \{0\}$ ensures that the Hessian of the objective functional $\mathcal{J}$ is positive definite, signifying a strict local minimum. However, the need for a cone of critical directions (see, \cite{ECFT, FT}) arises to relax the stringent requirement that $h$ spans all controls in $L^2(0,T;\mathbb{R}^N)$ except ${0}$, making the second-order condition more adaptable and applicable in practical scenarios. 

	\begin{Def}{(Critical Cone)}\label{D-CC}
	For any control $\widetilde{U}\in \mathbb{U}_{ad}$, let $\Lambda(\widetilde{U})$ denotes the set of all $h\in L^2(0,T;\mathbb{R}^N)$ such that for $i\in \{1,2,...,N\}$ and almost all $t \in [0,T]$, 
	$$h_i(t)
	\begin{cases}
		\geq 0 \ \ \ \text{if}\ \ \widetilde{U}_i(t)=	a_i(t),\\
		\leq 0 \ \ \ \text{if}\ \ \widetilde{U}_i(t)= b_i(t),\\
		=0     \ \ \ \text{if}\ \ \left(\varUpsilon_{\widetilde{U}}\right)_i \neq 0,
	\end{cases}
	$$
	where $\left(\varUpsilon_{\widetilde{U}}\right)_i:=\widetilde{U}_i+\int _{\Omega}\big(\phi \times \widetilde{m}+\phi \big)\cdot B_i\ dx$, where $B=(B_1,B_2,...,B_N)$ is  the geometry of the coil given in \eqref{gotc}. 
	The set of controls $\Lambda(\widetilde{U})$ is called the cone of critical directions.
\end{Def}

	To establish a second-order sufficient optimality condition, it is imperative to demonstrate the Fréchet differentiability of both the control-to-state and control-to-costate operators, along with verifying their Lipschitz continuity. The culmination of these results enable us to articulate second-order conditions for local optimality.
	
	\begin{Thm}\label{T-SOLO}
		Let $\widetilde{U}\in \mathbb{U}_{ad}$ be any control satisfying the variational inequality \eqref{FOOC}. Assume that $\frac{\partial^2 \mathcal J}{\partial U^2}(\widetilde{U})[h,h]> 0$ for all $h\in \Lambda(\widetilde{U}) \backslash \{0\}$, that is 		
		\begin{align*}
			&\int_0^T  h^2\  dt + \int_{\Omega_T} \big(\phi'_{\widetilde{U}}[h]\times \widetilde{m}\big) \cdot \zetaup(h)\ dx\ dt + \int_{\Omega_T} \big(\phi_{\widetilde{U}} \times m'_{\widetilde{U}}[h]\big)\cdot \zetaup(h)\ dx\ dt& \nonumber\\
			&\hspace{3cm} + \int_{\Omega_T} \phi'_{\widetilde{U}}[h]\cdot \zetaup(h) \ dx\ dt >0 \hspace{1cm}  \forall \ h\in \Lambda(\widetilde{U}) \backslash \{0\}.
		\end{align*}
		Then there exist positive constants $\epsilon$ and $\sigma$ such that for any control $U\in \mathbb{U}_{ad}$ satisfying  $\|U-\widetilde{U}\|_{L^2(0,T;\mathbb{R}^N)} \leq \delta $, the following inequality holds:
		\begin{equation}\label{QGC}
			\mathcal{J}(U) \geq \mathcal{J}(\widetilde{U}) + \frac{\epsilon}{2} \ \|U-\widetilde{U}\|^2_{L^2(0,T;\mathbb{R}^N)}.
		\end{equation}  	
		In particular, the inequality \eqref{QGC} implies that $\widetilde{U}$ stands as a local minimum for the functional $\mathcal{J}$ within the set of admissible controls $\mathbb{U}_{ad}$.
	\end{Thm}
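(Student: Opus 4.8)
The plan is to argue by contradiction, following the by-now classical strategy for second-order sufficient conditions of Casas and Tr\"oltzsch \cite{ECFT, FT}. Write $\mathcal J(U)$ for the reduced functional $\mathcal J(G(U),U)$. Suppose the quadratic growth estimate \eqref{QGC} fails for every admissible choice of the constants; then there is a sequence $\{U_k\}\subset \mathbb U_{ad}$ with $\rho_k:=\|U_k-\widetilde U\|_{L^2(0,T;\mathbb R^N)}\to 0$ and $\mathcal J(U_k)<\mathcal J(\widetilde U)+\tfrac1k\rho_k^2$ for all $k$. Set $h_k:=\rho_k^{-1}(U_k-\widetilde U)$, so that $\|h_k\|_{L^2(0,T;\mathbb R^N)}=1$; passing to a subsequence (not relabelled), $h_k\rightharpoonup h$ weakly in $L^2(0,T;\mathbb R^N)$. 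For $k$ large all $U_k$ lie in the open set $\mathbb U_R$ on which the control-to-state operator $G$ and the control-to-costate operator are $C^1$ with Lipschitz derivatives (the Fr\'echet differentiability and Lipschitz continuity that the paper establishes in preparation for this theorem), so $\mathcal J$ is twice differentiable near $\widetilde U$ and the Taylor expansions used below are justified.

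First I would verify $h\in\Lambda(\widetilde U)$. The box-constraint sign conditions pass to the weak limit because the sets $\{v:\ v_i\ge 0\ \text{a.e.\ on}\ \{\widetilde U_i=a_i\}\}$ and $\{v:\ v_i\le 0\ \text{a.e.\ on}\ \{\widetilde U_i=b_i\}\}$ are closed and convex, and each $h_{k,i}$ satisfies them thanks to $U_k\in\mathbb U_{a,b}$. For the complementarity condition, combine two inequalities. On one hand, the first-order variational inequality \eqref{FOOC} applied with $U=U_k$ gives $\mathcal J'(\widetilde U)[h_k]=\sum_i\int_0^T(\varUpsilon_{\widetilde U})_i\,h_{k,i}\,dt\ge 0$, hence in the limit $\mathcal J'(\widetilde U)[h]\ge 0$, the derivative being represented by $(\varUpsilon_{\widetilde U})\in L^2(0,T;\mathbb R^N)$. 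On the other hand, a first-order Taylor expansion together with the contradiction hypothesis gives $\rho_k\mathcal J'(\widetilde U)[h_k]+o(\rho_k)=\mathcal J(U_k)-\mathcal J(\widetilde U)<\tfrac1k\rho_k^2$, so after dividing by $\rho_k$ and letting $k\to\infty$, $\mathcal J'(\widetilde U)[h]\le 0$. Therefore $\sum_i\int_0^T(\varUpsilon_{\widetilde U})_i h_i\,dt=0$; since each integrand has a fixed sign (by the sign conditions on $h_i$ just established together with the definition of $(\varUpsilon_{\widetilde U})_i$), it must vanish a.e., which forces $h_i=0$ wherever $(\varUpsilon_{\widetilde U})_i\neq0$. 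Thus $h\in\Lambda(\widetilde U)$.

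Next, a second-order Taylor expansion with Lagrange remainder yields $\widehat{U}_k:=\widetilde U+\theta_k(U_k-\widetilde U)$, $\theta_k\in(0,1)$, such that $\mathcal J(U_k)=\mathcal J(\widetilde U)+\mathcal J'(\widetilde U)[U_k-\widetilde U]+\tfrac12\mathcal J''(\widehat{U}_k)[U_k-\widetilde U,U_k-\widetilde U]$. Using $\mathcal J'(\widetilde U)[U_k-\widetilde U]\ge 0$ from \eqref{FOOC} and the contradiction hypothesis, then dividing by $\rho_k^2$, one gets $\tfrac12\mathcal J''(\widehat{U}_k)[h_k,h_k]<\tfrac1k\to0$. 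I would then use the structure of the Hessian read off from the statement of Theorem \ref{T-SOLO}: $\mathcal J''(V)[v,v]=\|v\|_{L^2(0,T;\mathbb R^N)}^2+Q_V(v,v)$, where $Q_V$ gathers the three $\Omega_T$-integrals built from $m'_V[v]$, $\phi'_V[v]$ and $\phi_V$. Since $\widehat{U}_k\to\widetilde U$ strongly in $L^2(0,T;\mathbb R^N)$ and $h_k\rightharpoonup h$, the Lipschitz dependence of $G$ and of the costate on the control, together with an Aubin--Lions compactness argument for the linearized state equation and the a priori bounds of Theorems \ref{T-RS} and \ref{T-AWS}, give $m'_{\widehat{U}_k}[h_k]\to m'_{\widetilde U}[h]$ and $\phi'_{\widehat{U}_k}[h_k]\to\phi'_{\widetilde U}[h]$ strongly in the spaces in which the integrands of $Q$ are continuous, whence $Q_{\widehat{U}_k}(h_k,h_k)\to Q_{\widetilde U}(h,h)$; meanwhile $\liminf_k\|h_k\|^2\ge\|h\|^2$ by weak lower semicontinuity. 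Combining, $\mathcal J''(\widetilde U)[h,h]=\|h\|^2+Q_{\widetilde U}(h,h)\le\liminf_k\mathcal J''(\widehat{U}_k)[h_k,h_k]\le 0$. If $h\neq 0$, then $h\in\Lambda(\widetilde U)\setminus\{0\}$, so the hypothesis forces $\mathcal J''(\widetilde U)[h,h]>0$, a contradiction. If $h=0$, then $Q_{\widehat{U}_k}(h_k,h_k)\to Q_{\widetilde U}(0,0)=0$ while $\|h_k\|^2=1$, so $\mathcal J''(\widehat{U}_k)[h_k,h_k]\to1>0$, again contradicting $\tfrac12\mathcal J''(\widehat{U}_k)[h_k,h_k]\to0$. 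In all cases we reach a contradiction; hence \eqref{QGC} holds for suitable $\epsilon,\sigma>0$, and in particular $\widetilde U$ is a local minimizer of $\mathcal J$ on $\mathbb U_{ad}$.

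I expect the principal difficulty to be the limit passage in the non-convex remainder $Q_{\widehat{U}_k}(h_k,h_k)$, that is, proving that $\widehat{U}_k\to\widetilde U$ and $h_k\rightharpoonup h$ imply the strong convergences $m'_{\widehat{U}_k}[h_k]\to m'_{\widetilde U}[h]$ and $\phi'_{\widehat{U}_k}[h_k]\to\phi'_{\widetilde U}[h]$ in topologies strong enough to make the three $\Omega_T$-integrals sequentially continuous. This requires uniform (in $k$) energy estimates for the linearized state and adjoint systems, exploiting $\widehat{U}_k\in\mathbb U_R$ and the bounds of Theorems \ref{T-RS} and \ref{T-AWS}; an Aubin--Lions argument upgrading the weak limit of $m'_{\widehat{U}_k}[h_k]$ to a strong limit and identifying it (via linearity and $\widehat{U}_k\to\widetilde U$) as $m'_{\widetilde U}[h]$; and then stability of the linearized adjoint equation under convergence of its coefficients and data. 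Much of this is a careful repackaging of the Fr\'echet-differentiability and Lipschitz estimates already proved, but one must check that the functional-analytic topologies line up so that each bilinear term is continuous for the mixed weak/strong convergence at hand rather than merely for norm convergence — this is precisely the Legendre-form property that makes the contradiction scheme work.
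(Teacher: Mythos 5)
Your proposal is correct and follows essentially the same route as the paper: the Casas--Tr\"oltzsch contradiction scheme with normalized quotients $h_k=\rho_k^{-1}(U_k-\widetilde U)\rightharpoonup h$, verification that $h\in\Lambda(\widetilde U)$ by combining the sign/complementarity information from \eqref{FOOC} with a first-order Taylor expansion, a second-order Taylor expansion giving $\limsup_k \frac{\partial^2\mathcal J}{\partial U^2}(\widetilde U)[h_k,h_k]\le 0$ via the Lipschitz continuity of $G'$ and $\Phi'$, and the final dichotomy ($h\neq 0$ contradicts the coercivity hypothesis on the critical cone; $h=0$ contradicts $\|h_k\|=1$). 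The paper merely organizes the same argument into four explicit steps and carries out in detail the mixed weak/strong limit passages in the Hessian that you correctly flag as the main technical burden.
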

	\noindent The proof of Theorem \ref{T-SOLO} is presented in subsection \ref{SS-SOOC}. By virtue of the control-to-state operator we can define a \textit{reduced cost functional} $\mathcal{I}  : \mathbb{U}_{ad} \to \mathbb{R}$ by $\mathcal{I}(U)=\mathcal J(G(U),U)$. Therefore, the optimal control problem (OCP) can be redefined as follows:
	\begin{equation*}\text{(MOCP)}
		\left\{	\begin{array}{lclclc}
			\text{minimize} \ \ \mathcal{I}(U)	\\
			U\in \mathbb{U}_{ad}.
		\end{array}
		\right.
	\end{equation*}
	
	Though the second-order sufficient condition offers valuable insights into optimality, its inherently local nature necessitates the exploration of global optimality conditions.
	To derive global optimality conditions, we have adopted the methodologies outlined in \cite{ADH, MEPK, SKG}. By incorporating these techniques, we aim to extend our analysis beyond local optimality considerations, ensuring a more thorough exploration of optimal control dynamics for the given system through a condition on the solution of the direct problem \eqref{NLP-EV} and the adjoint problem \eqref{AS}.

	\begin{Thm}\label{T-GO}
		Assume any control $\widetilde{U} \in \mathbb{U}_{ad}$ with the associated solution $\widetilde{m}\in \mathcal{M}$ and adjoint state $\phi \in \mathcal{Z}$ satisfies the variational inequality \eqref{FOOC}. Moreover, if the triplet $(\widetilde{U},\widetilde{m},\phi)$ satisfies the following condition:
		\begin{equation}\label{GO-C}
			C(\Omega,T,a,b,m_0)\left\{ 1+ \|\widetilde{m}\|_{L^2(0,T;H^1(\Omega))}\right\}  \|\phi\|_{L^2(0,T;L^2(\Omega))} \leq \frac{1}{2},
		\end{equation}
		then $\widetilde{U}$ is a global optimal control of MOCP.
		Furthermore, in the event of strict inequality in \eqref{GO-C}, the global optimum is unique. 
	\end{Thm}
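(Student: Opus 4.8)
The plan is to exploit the special structure of the objective functional $\mathcal J$: the reduced functional $\mathcal I(U)=\mathcal J(G(U),U)$ decomposes as a strictly convex quadratic term $\frac12\sum_i\|U_i\|^2_{L^2(0,T)}$ plus a tracking part that is smooth but non-convex in $U$ through the nonlinear state map $G$. The key observation behind condition \eqref{GO-C} is that the non-convexity of $\mathcal I$ is controlled by the second derivative of the tracking term, and the bracketed quantity $C(\Omega,T,a,b,m_0)\{1+\|\widetilde m\|_{L^2(0,T;H^1)}\}\|\phi\|_{L^2(0,T;L^2)}$ is precisely a bound for the operator norm of the "bad" part of the Hessian of $\mathcal I$ on the admissible set. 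When this bound is $\le \tfrac12$, the quadratic regularization term dominates and $\mathcal I$ becomes globally convex (indeed strongly convex) on $\mathbb U_{ad}$, so any control satisfying the first-order variational inequality \eqref{FOOC} is automatically a global minimizer.

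Concretely, I would proceed as follows. First, fix an arbitrary admissible control $U\in\mathbb U_{ad}$ and write $h=U-\widetilde U$, with $m_U=G(U)$, $\widetilde m=G(\widetilde U)$. Using the Fréchet differentiability and Lipschitz estimates for $G$ established earlier (the ingredients behind Theorem \ref{T-SOLO}), perform a Taylor expansion of $\mathcal I$ along the segment joining $\widetilde U$ to $U$: write
\begin{equation*}
\mathcal I(U)=\mathcal I(\widetilde U)+\mathcal I'(\widetilde U)[h]+\tfrac12\,\mathcal I''(\widetilde U+\theta h)[h,h]
\end{equation*}
for some $\theta\in(0,1)$. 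By the first-order condition \eqref{FOOC}, $\mathcal I'(\widetilde U)[h]\ge 0$ since $U\in\mathbb U_{ad}$. It therefore suffices to show $\mathcal I''(\widetilde U+\theta h)[h,h]\ge 0$ (or $>0$ under strict inequality) for every $\theta\in[0,1]$ and every such $h$. Second, compute $\mathcal I''$ explicitly: it equals $\|h\|^2_{L^2(0,T;\mathbb R^N)}$ plus cross terms involving $m'_V[h]$, $\phi'_V[h]$ and $\phi_V$ at the intermediate control $V=\widetilde U+\theta h$, exactly of the form appearing in the hypothesis of Theorem \ref{T-SOLO}. Third, estimate those cross terms: bound $\|m'_V[h]\|$ and $\|\phi'_V[h]\|$ in the relevant norms by $C\|\zetaup(h)\|\le C\|h\|_{L^2(0,T;\mathbb R^N)}$ using the linearized state/costate estimates and \eqref{PEST}, and bound the multiplicative factors by $1+\|m_V\|_{L^2(0,T;H^1)}$ and $\|\phi_V\|_{L^2(0,T;L^2)}$. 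Since the admissible set is bounded (box constraints, hence $\|V\|\le\|a\|+\|b\|$, and for $n=3$ the smallness set $\mathbb U_{sa}$), the constants $\|m_V\|$ and $\|\phi_V\|$ are themselves bounded uniformly in $V\in\mathbb U_{ad}$ by quantities comparable to $\|\widetilde m\|$ and $\|\phi\|$ up to a data-dependent constant; absorbing everything into $C(\Omega,T,a,b,m_0)$, the cross terms are bounded in absolute value by $C(\Omega,T,a,b,m_0)\{1+\|\widetilde m\|_{L^2(0,T;H^1)}\}\|\phi\|_{L^2(0,T;L^2)}\,\|h\|^2_{L^2(0,T;\mathbb R^N)}$. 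Hence
\begin{equation*}
\mathcal I''(\widetilde U+\theta h)[h,h]\ \ge\ \Big(1-C(\Omega,T,a,b,m_0)\{1+\|\widetilde m\|_{L^2(0,T;H^1)}\}\|\phi\|_{L^2(0,T;L^2)}\Big)\|h\|^2_{L^2(0,T;\mathbb R^N)}\ \ge\ \tfrac12\|h\|^2,
\end{equation*}
using \eqref{GO-C}. Combining with $\mathcal I'(\widetilde U)[h]\ge0$ gives $\mathcal I(U)\ge\mathcal I(\widetilde U)$, i.e. $\widetilde U$ is globally optimal. For the uniqueness statement, if \eqref{GO-C} is strict then the displayed inequality shows $\mathcal I$ is strongly convex on $\mathbb U_{ad}$ with modulus $2\epsilon>0$ for some $\epsilon>0$; if $\widehat U$ were another global minimizer, applying the strong-convexity inequality (equivalently, running the Taylor argument with the roles symmetric and using $\mathcal I'(\widehat U)[\widetilde U-\widehat U]\ge0$ as well) forces $\widehat U=\widetilde U$ in $L^2(0,T;\mathbb R^N)$.

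The main obstacle I anticipate is making the uniformity of the estimates over the intermediate controls $V=\widetilde U+\theta h$ genuinely rigorous — in particular, for $n=3$ one must check that the whole segment $[\widetilde U,U]$ stays inside the region of global solvability so that $m_V=G(V)$, $\phi_V$, $m'_V[h]$, $\phi'_V[h]$ are all well defined with the quantitative bounds \eqref{SSEE2}, \eqref{AEEE} available; this is where convexity of the smallness set $\mathbb U_{sa}$ (or a reduction of $R$) must be invoked. The secondary technical point is bounding the three cross terms in $\mathcal I''$ by the exact combination of norms appearing in \eqref{GO-C}, which requires carefully pairing each Hölder/Gagliardo–Nirenberg estimate so that only $\|\widetilde m\|_{L^2(0,T;H^1)}$ and $\|\phi\|_{L^2(0,T;L^2)}$ survive as the state/adjoint-dependent factors, with all higher norms of $\widetilde m$ and all norms of the linearized variables either absorbed into the data constant or cancelled against $\|h\|^2$; this is routine but bookkeeping-intensive, and is exactly the place where the structure of the cost functional \eqref{CF-2} (only $L^2$ tracking, no gradient tracking) is essential.
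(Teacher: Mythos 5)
There is a genuine gap: your strategy bounds the wrong object. The paper's condition \eqref{GO-C} requires the ``non-convex'' part of the increment to be controlled by the \emph{product} $\{1+\|\widetilde m\|_{L^2(0,T;H^1(\Omega))}\}\|\phi\|_{L^2(0,T;L^2(\Omega))}$, which in particular can be made to hold by taking $\|\phi\|$ arbitrarily small. Your Hessian $\mathcal I''(V)[h,h]$ does not admit such a bound. First, even at $V=\widetilde U$, the cross term $\int_{\Omega_T}\phi'_{\widetilde U}[h]\cdot\zetaup(h)\,dx\,dt$ involves the linearized costate $\phi'_{\widetilde U}[h]$, which solves \eqref{ASD} with the source term $-z$ on the right-hand side; hence $\|\phi'_{\widetilde U}[h]\|$ does not vanish as $\|\phi\|\to 0$ and this term is only $O(\|h\|^2)$, not $O(\|\phi\|\,\|h\|^2)$. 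Second, at intermediate controls $V=\widetilde U+\theta h$ the Lipschitz estimates \eqref{LCCTSO}, \eqref{LCI} give only \emph{additive} control, $\|\phi_V\|\le\|\phi\|+C\|V-\widetilde U\|$ and similarly for $m_V$; $\phi_V$ solves the adjoint problem with data $m_V-m_d$, $m_V(T)-m_\Omega$, which need not be small when $\phi$ is. The resulting bound $C\{1+\|\widetilde m\|+C'\}\{\|\phi\|+C''\}\|h\|^2$ contains an additive piece $CC''\{1+\|\widetilde m\|\}\|h\|^2$ that is not controlled by \eqref{GO-C}. So your argument would at best yield global optimality under a different, stronger hypothesis, not under the stated one.

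The paper avoids second derivatives entirely. Because $\mathcal J$ is exactly quadratic in $(m,U)$, the difference $\mathcal I(U)-\mathcal I(\widetilde U)$ is computed as an exact identity \eqref{GO-1}; the first-order inequality \eqref{FOOC} absorbs the linear control term, leaving nonnegative quadratic terms plus a remainder $\mathcal R$. One then pairs the equation \eqref{GO-S} for $\shat m=m_U-\widetilde m$ with the \emph{fixed} adjoint $\phi$ and tests the adjoint equation with $\shat m$; all terms linear in $\shat m$ cancel, and $\mathcal R$ reduces to genuinely higher-order terms ($\shat m\times\Delta\shat m$, $\shat m\times\zetaup(\shat U)$, cubic products) each explicitly multiplied by $\phi$. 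These are estimated via the Lipschitz bound $\|\shat m\|_{\mathcal M}\le C\|\shat U\|$ to give $|\mathcal R|\le C\{1+\|\widetilde m\|_{L^2(0,T;H^1(\Omega))}\}\|\phi\|_{L^2(0,T;L^2(\Omega))}\|\shat U\|^2$, which \eqref{GO-C} absorbs into $\frac12\|\shat U\|^2$. This is why the factor $\|\phi\|$ genuinely appears in the condition: the remainder is a duality pairing against $\phi$ itself, not a Hessian.
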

	
	 

	Since the OCP is non-convex, the uniqueness of the optimal control cannot be expected. However, it is essential for implementing accurate numerical methods to approximate the optimal control and for ensuring the stability and efficiency of these computational approaches. Therefore, finally, we also establish a condition on the time `T' that ensures the uniqueness of the optimal control.
	\begin{Thm}\label{UOO}
		Let $\widetilde{U}\in \mathbb{U}_{ad}$ be an optimal control of the minimization problem OCP. Then $\widetilde{U}$ is the unique optimal control under the following condition:
		\begin{equation}\label{ULOC}
		 2C^4_{4,n} \|B\|^2_{H^1(\Omega)} \bigg(4C_2 \|\phi_{\widetilde{U}}\|^2_{L^\infty(0,T;L^2(\Omega))} + \left( C_2 +2 C_3\right)\left( 4C_2 C_{a,b} + \|\widetilde{m}\|_{L^\infty(0,T;H^1(\Omega))}+|\Omega|\right) \bigg)<\frac{1}{T},
		\end{equation}
		where $C_{a,b}=\|a\|^2_{L^2(0,T;\mathbb{R}^N)}+\|b\|^2_{L^2(0,T;\mathbb{R}^N)}$, $C_{4,n}(\Omega)$ is the embedding constant for  $H^1(\Omega) \hookrightarrow L^4(\Omega))$, and $C_2$ and $C_3$ are the Lipschitz constants obtained in estimates \eqref{LCCTSO} and \eqref{LCI}, respectively.
	\end{Thm}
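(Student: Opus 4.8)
The plan is to argue by contradiction: suppose $\widetilde U^{(1)}$ and $\widetilde U^{(2)}$ are two distinct optimal controls for the OCP, with associated states $m^{(1)}=G(\widetilde U^{(1)})$, $m^{(2)}=G(\widetilde U^{(2)})$ and adjoint states $\phi^{(1)},\phi^{(2)}\in\mathcal Z$. Each optimal control satisfies the first-order variational inequality \eqref{FOOC}. The first step is to test the variational inequality for $\widetilde U^{(1)}$ with the admissible competitor $U=\widetilde U^{(2)}$, and symmetrically test the one for $\widetilde U^{(2)}$ with $U=\widetilde U^{(1)}$, then add the two inequalities. The quadratic regularization terms $\sum_i\int_0^T\widetilde U_i(U_i-\widetilde U_i)\,dt$ combine to produce $-\|\widetilde U^{(1)}-\widetilde U^{(2)}\|^2_{L^2(0,T;\mathbb R^N)}$, so one obtains
\begin{equation*}
	\|\widetilde U^{(1)}-\widetilde U^{(2)}\|^2_{L^2(0,T;\mathbb R^N)} \le \sum_{i=1}^N\int_0^T \big(\widetilde U^{(2)}_i-\widetilde U^{(1)}_i\big)\int_\Omega \Big[\big(\phi^{(1)}\times m^{(1)}+\phi^{(1)}\big)-\big(\phi^{(2)}\times m^{(2)}+\phi^{(2)}\big)\Big]\cdot B_i\,dx\,dt.
\end{equation*}

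The next step is to estimate the right-hand side. I would write the bracketed difference as $(\phi^{(1)}-\phi^{(2)})\times m^{(1)} + \phi^{(2)}\times(m^{(1)}-m^{(2)}) + (\phi^{(1)}-\phi^{(2)})$ and bound the spatial integral using the embedding $H^1(\Omega)\hookrightarrow L^4(\Omega)$ with constant $C_{4,n}$ (so that, e.g., $\|\phi^{(2)}\times(m^{(1)}-m^{(2)})\|_{L^2}\lesssim C_{4,n}^2\|\phi^{(2)}\|_{L^4}\|m^{(1)}-m^{(2)}\|_{L^4}$) together with $\|B_i\|_{L^2}\le\|B\|_{H^1}$ and Hölder in time. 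Here the Lipschitz estimates established earlier enter: the Lipschitz continuity of the control-to-state operator, $\|m^{(1)}-m^{(2)}\|\le C_{?}\|\widetilde U^{(1)}-\widetilde U^{(2)}\|$, and the Lipschitz continuity of the control-to-costate map, $\|\phi^{(1)}-\phi^{(2)}\|\le C_3\|\widetilde U^{(1)}-\widetilde U^{(2)}\|$ (estimates \eqref{LCCTSO} and \eqref{LCI}), so that each term on the right is bounded by a constant times $T$ times $\|\widetilde U^{(1)}-\widetilde U^{(2)}\|^2_{L^2(0,T;\mathbb R^N)}$. The constant that emerges is precisely the left side of \eqref{ULOC}: the factor $2C^4_{4,n}\|B\|^2_{H^1(\Omega)}$ comes from squaring the embedding and coil bounds, $4C_2\|\phi_{\widetilde U}\|^2_{L^\infty(0,T;L^2(\Omega))}$ from the term carrying $\phi^{(2)}\times(m^{(1)}-m^{(2)})$, and the $(C_2+2C_3)(4C_2C_{a,b}+\|\widetilde m\|_{L^\infty(0,T;H^1)}+|\Omega|)$ block from the remaining two terms after inserting the a priori bounds on $\|m^{(i)}\|$ (controlled via $C_{a,b}$ and the admissible box) and on $\|\phi^{(i)}\|$.

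Putting these together yields $\|\widetilde U^{(1)}-\widetilde U^{(2)}\|^2_{L^2(0,T;\mathbb R^N)} \le T\,K\,\|\widetilde U^{(1)}-\widetilde U^{(2)}\|^2_{L^2(0,T;\mathbb R^N)}$ with $K$ equal to the left-hand side of \eqref{ULOC}; since \eqref{ULOC} says $TK<1$, this forces $\widetilde U^{(1)}=\widetilde U^{(2)}$, the desired uniqueness. The main obstacle I anticipate is the careful bookkeeping of constants so that the final bound matches \eqref{ULOC} exactly — in particular, correctly tracking which norm of the adjoint ($L^\infty(0,T;L^2)$ versus weaker norms) is needed in each term, making sure the uniform a priori bounds on the two states and two adjoints can be taken independent of the particular optimal control (this uses that all admissible controls lie in the bounded set $\mathbb U_{ad}\subset\mathbb U_R$, so the energy estimates \eqref{SSEE2} and \eqref{AEEE} apply uniformly), and verifying that the Lipschitz constants $C_2,C_3$ from \eqref{LCCTSO}, \eqref{LCI} are genuinely the relevant ones and do not themselves hide a $T$-dependence that would spoil the smallness condition. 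A secondary technical point is justifying that $\widetilde U^{(2)}$ (resp. $\widetilde U^{(1)}$) is an admissible test function in the variational inequality for $\widetilde U^{(1)}$ (resp. $\widetilde U^{(2)}$), which is immediate from the convexity of $\mathbb U_{ad}$.
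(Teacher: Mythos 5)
Your proposal is correct and follows essentially the same route as the paper: the only cosmetic difference is that you derive the contraction estimate by symmetrizing the variational inequality \eqref{FOOC} for the two competing optima, whereas the paper invokes the equivalent projection formula \eqref{PF} and the $1$-Lipschitz property of $\mathcal{P}_{[a_i(t),b_i(t)]}$, both of which yield $\|U-\widetilde{U}\|_{L^2(0,T;\mathbb{R}^N)}\leq \|D\|_{L^2(0,T;\mathbb{R}^N)}$ for the same difference term $D_i=\int_\Omega\big[(\phi_U\times m_U+\phi_U)-(\phi_{\widetilde{U}}\times\widetilde{m}+\phi_{\widetilde{U}})\big]\cdot B_i\,dx$. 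The remaining steps you outline --- splitting $D$ into three pieces, the embedding $H^1(\Omega)\hookrightarrow L^4(\Omega)$, and extracting the factor $T$ from the Lipschitz estimates \eqref{LCCTSO} and \eqref{LCI} via energy/H\"older-in-time arguments --- are exactly the paper's estimates \eqref{UO-1}--\eqref{UO-4}.
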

\noindent The proofs of Theorem \ref{T-GO} and Theorem \ref{UOO} are given in section \ref{S-GOC} and section \ref{S-UOO}, respectively.

	\subsection{Inequalities}
	
	\noindent In this subsection, we present specific equivalent norms and inequalities that play a crucial role in our paper. We begin by discussing the basic properties of the cross product and equivalent norm estimates as presented in the following lemmas.
	\begin{Lem}\label{CPP}
		Let $a,b$ and $c$ be three vectors of $\mathbb{R}^3$, then the following vector identities hold: $a\cdot(b \times c)=-(b \times a)\cdot c$,  $a \cdot (a \times b)=0$,  $a \times (b \times c)=(a \cdot c) b - (a \cdot b) c$. Moreover, assume that $1 \leq r,s \leq \infty, \ (1/r)+(1/s)=1$ and $p\geq 1$, then if $f\in L^{pr}(\Omega)$ and $g\in L^{ps}(\Omega),$ we have
		\begin{equation*}\label{ES0}
			\|f \times g\|_{L^p(\Omega)} \leq \|f\|_{L^{pr}(\Omega)} \|g\|_{L^{ps}(\Omega)}.	
		\end{equation*}	
	\end{Lem}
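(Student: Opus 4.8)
The plan is to treat the statement as two logically independent assertions: the three purely algebraic identities for fixed vectors in $\mathbb{R}^3$, and the analytic $L^p$ bound for the cross product of two measurable vector fields. Both parts are elementary, so the work is essentially a matter of organizing the index bookkeeping cleanly. For the algebraic part I would work in Cartesian components with the Levi-Civita symbol $\varepsilon_{ijk}$ and the summation convention, writing $(a\times b)_i=\varepsilon_{ijk}a_j b_k$ and $a\cdot(b\times c)=\varepsilon_{ijk}a_i b_j c_k$; for the analytic part the whole proof reduces to a single pointwise inequality followed by H\"older.

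For the identities, the relation $a\cdot(b\times c)=-(b\times a)\cdot c$ follows from the total antisymmetry of $\varepsilon_{ijk}$: expanding $(b\times a)\cdot c=\varepsilon_{ijk}b_j a_k c_i$ and relabeling the dummy indices produces $-\varepsilon_{ijk}a_i b_j c_k=-a\cdot(b\times c)$, so the only delicate point is tracking the single index transposition that generates the sign. The relation $a\cdot(a\times b)=0$ is immediate, since $a\cdot(a\times b)=\varepsilon_{ijk}a_i a_j b_k$ contracts the tensor $\varepsilon_{ijk}$, antisymmetric in $i,j$, against $a_i a_j$, which is symmetric in $i,j$, and such a contraction vanishes (geometrically this is just orthogonality of $a\times b$ to $a$). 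For the vector triple product I would invoke the contraction identity $\varepsilon_{ijk}\varepsilon_{klm}=\delta_{il}\delta_{jm}-\delta_{im}\delta_{jl}$, so that the $i$-th component $\varepsilon_{ijk}a_j\varepsilon_{klm}b_l c_m$ collapses to $a_j b_i c_j-a_j b_j c_i=(a\cdot c)\,b_i-(a\cdot b)\,c_i$, which is exactly $a\times(b\times c)=(a\cdot c)b-(a\cdot b)c$ read off componentwise.

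For the norm estimate the key observation is the pointwise inequality $|f(x)\times g(x)|\le |f(x)|\,|g(x)|$, valid at every $x\in\Omega$ because $|a\times b|=|a|\,|b|\sin\theta\le |a|\,|b|$ in $\mathbb{R}^3$. Raising to the $p$-th power and integrating gives $\|f\times g\|_{L^p(\Omega)}^p\le\int_\Omega |f|^p|g|^p\,dx$, and I would then apply H\"older's inequality with the conjugate exponents $r,s$ (so that $1/r+1/s=1$) to the product $|f|^p\cdot|g|^p$, obtaining $\int_\Omega |f|^p|g|^p\,dx\le\big(\int_\Omega |f|^{pr}\,dx\big)^{1/r}\big(\int_\Omega |g|^{ps}\,dx\big)^{1/s}=\|f\|_{L^{pr}(\Omega)}^p\,\|g\|_{L^{ps}(\Omega)}^p$; taking $p$-th roots yields the claim. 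There is no substantive obstacle here: the only points demanding attention are the sign in the first triple-product identity and the endpoint exponents, where for $r=1,\,s=\infty$ (or vice versa) H\"older must be read in its essential-supremum form $\int_\Omega|f|^p|g|^p\,dx\le \|g\|_{L^\infty(\Omega)}^p\int_\Omega|f|^p\,dx$, which still delivers the stated bound. Measurability of $f\times g$ follows from that of the components, so all integrals are well defined.
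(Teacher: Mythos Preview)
Your proof is correct and follows the same approach as the paper, which simply notes that the lemma ``follows from a direct application of H\"older's inequality.'' You have supplied the details the paper omits---the Levi-Civita bookkeeping for the vector identities and the pointwise bound $|f\times g|\le|f||g|$ preceding H\"older---but the underlying argument is the same.
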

	\noindent The proof of Lemma \ref{CPP} follows from a direct application of H\"older's inequality.
	\begin{Lem}[see, \cite{KW}]\label{EN}
		Let $\Omega$ be a bounded smooth domain in $\mathbb{R}^n$ and $k \in \mathbb{N}$. There exists a constant $C_{k,n}>0$ such that for all $m \in H^{k+2}(\Omega)$ and $\frac{\partial m}{\partial \eta}\big|_{\partial \Omega}=0,$ it holds that
		\begin{equation*}\label{ES1}
			\|m\|_{H^{k+2}(\Omega)} \leq C_{k,n} \left(\|m\|_{L^2(\Omega)}+ \|\Delta m\|_{H^k(\Omega)}\right).	
		\end{equation*}
	\end{Lem}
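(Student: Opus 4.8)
The plan is to prove Lemma \ref{EN} by combining a standard a priori elliptic estimate for the Neumann Laplacian with a compactness–interpolation argument that removes the lower-order term. Since $\Delta$ and the Neumann trace act componentwise on $m=(m^{(1)},m^{(2)},m^{(3)})$, it suffices to prove the inequality for a scalar function $u\in H^{k+2}(\Omega)$ satisfying $\partial_\eta u|_{\partial\Omega}=0$; summing the three component bounds then yields the vector-valued estimate. First I would establish the crude a priori estimate
$$\|u\|_{H^{k+2}(\Omega)} \leq C\left(\|\Delta u\|_{H^k(\Omega)} + \|u\|_{H^{k+1}(\Omega)}\right),$$
and then absorb $\|u\|_{H^{k+1}}$ at the cost of replacing it by $\|u\|_{L^2}$.

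For the a priori estimate I would argue by localization. Fix a finite cover of $\bar\Omega$ by balls with a subordinate partition of unity $\{\chi_\ell\}$, splitting $u=\sum_\ell \chi_\ell u$ into interior pieces (supported away from $\partial\Omega$) and boundary pieces. On an interior piece $v=\chi_\ell u$, extension by zero to $\mathbb{R}^n$ and the Fourier transform (equivalently, second-order difference quotients) give $\|v\|_{H^{k+2}(\mathbb{R}^n)}\leq C(\|\Delta v\|_{H^k}+\|v\|_{H^{k+1}})$, while the product rule $\Delta(\chi_\ell u)=\chi_\ell\Delta u+2\nabla\chi_\ell\cdot\nabla u+(\Delta\chi_\ell)u$ shows every commutator term has order $\leq k+1$ and is therefore swallowed by $\|u\|_{H^{k+1}}$. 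On a boundary piece I would flatten $\partial\Omega$ by a smooth diffeomorphism (here the smoothness of $\partial\Omega$ is essential), turning $\Delta$ into a variable-coefficient elliptic operator and the condition $\partial_\eta u=0$ into a homogeneous conormal condition; extending by even reflection across the flattened boundary — which preserves $H^{k+2}$ regularity precisely because the relevant odd-order normal derivatives vanish, by the Neumann condition together with tangential differentiations of the equation — reduces the boundary estimate to the interior case already handled. This localization/reflection step, in particular the bookkeeping of the curvature and lower-order commutator terms generated by the flattening for general $k$, is the main technical obstacle; alternatively one may simply invoke the Agmon--Douglis--Nirenberg regularity theory for the Neumann problem as in \cite{KW}.

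It remains to absorb $\|u\|_{H^{k+1}}$. Because $\Omega$ is bounded and smooth, the embedding $H^{k+2}(\Omega)\hookrightarrow H^{k+1}(\Omega)$ is compact (Rellich--Kondrachov) and $H^{k+1}(\Omega)\hookrightarrow L^2(\Omega)$ is continuous, so Ehrling's lemma gives, for every $\epsilon>0$, a constant $C_\epsilon>0$ with
$$\|u\|_{H^{k+1}(\Omega)} \leq \epsilon\,\|u\|_{H^{k+2}(\Omega)} + C_\epsilon\,\|u\|_{L^2(\Omega)}.$$
Substituting this into the a priori estimate and choosing $\epsilon$ small enough that $C\epsilon\leq\tfrac12$ allows me to absorb $\tfrac12\|u\|_{H^{k+2}}$ into the left-hand side, yielding $\|u\|_{H^{k+2}(\Omega)}\leq 2C(\|\Delta u\|_{H^k(\Omega)}+C_\epsilon\|u\|_{L^2(\Omega)})$, which is the scalar inequality with $C_{k,n}:=2C\max\{1,C_\epsilon\}$. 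The constants lying in the kernel of the Neumann Laplacian cause no difficulty: a constant $u\equiv c$ has $\Delta u=0$ yet $\|u\|_{L^2}=|c|\,|\Omega|^{1/2}>0$, so the right-hand side still controls the left. Summing the resulting inequality over the three components of $m$ completes the proof.
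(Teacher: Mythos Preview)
Your proof sketch is correct and follows the standard route to this elliptic regularity estimate. Note, however, that the paper does not supply its own proof of this lemma: it is quoted directly from \cite{KW} (see the bracketed citation in the lemma heading), so there is no ``paper's proof'' to compare against. Your argument---local interior estimates plus boundary flattening/reflection to obtain the crude bound $\|u\|_{H^{k+2}}\le C(\|\Delta u\|_{H^k}+\|u\|_{H^{k+1}})$, followed by Ehrling's lemma to absorb the intermediate norm---is exactly the kind of proof one finds in the cited reference or in standard elliptic theory. The one point worth flagging is the even-reflection step for general $k$: the Neumann condition $\partial_\eta u=0$ guarantees that the first normal derivative matches across the reflected boundary, but higher odd-order normal derivatives need not vanish without further compatibility, so the reflected function may fail to lie in $H^{k+2}$ globally. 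You correctly identify this as the main technical obstacle and sensibly offer the Agmon--Douglis--Nirenberg alternative, which handles all $k$ uniformly; in practice that is the cleaner way to justify the crude a priori estimate.
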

	\noindent Using Lemma \ref{EN}, we can define an equivalent norm on $H^{k+2}(\Omega)$ as follows 
	$$\|m\|_{H^{k+2}(\Omega)}:=\|m\|_{L^2(\Omega)}+\|\Delta m\|_{H^k(\Omega)}.$$
	
	The following inequalities will be consistently utilized in this paper. It is crucial to emphasize that the constant $C>0$ may assume different values for each inequality, ranging from the estimate \eqref{ES2} to \eqref{AEE-10}.
	\begin{Pro}\label{PROP1}
		Let $\Omega$ be a regular bounded subset of  $\mathbb{R}^2$ or $\mathbb{R}^3$. There exists a constant $C>0$ depending on $\Omega$ such that for all $m \in H^2(\Omega)$ with $\frac{\partial m}{\partial \eta}\big|_{\partial\Omega}=0,$ we have
		\begin{eqnarray} 
			\|m\|_{L^\infty(\Omega)} &\leq& C\ \left(\|m\|^2_{L^2(\Omega)}+ \|\Delta m\|^2_{L^2(\Omega)}\right)^{\frac{1}{2}},\label{ES2}\\
			\|\nabla m\|_{L^s(\Omega)} &\leq& C\  \|\Delta m\|_{L^2(\Omega)}, \ \ \forall \ s \in [1,6],\label{ES3}\\
			\|D^2m\|_{L^2(\Omega)} &\leq& C\ \|\Delta m\|_{L^2(\Omega)}.\label{ES4}	
		\end{eqnarray}
		Moreover, for every $m \in H^3(\Omega)$ with $\frac{\partial m}{\partial \eta}\big|_{\partial\Omega}=0,$ we have
		\begin{eqnarray}
			\|\Delta m\|_{L^2(\Omega)} &\leq& C\ \|\nabla \Delta m\|_{L^2(\Omega)},\label{ES7}\\ 	
			\|D^3m\|_{L^2(\Omega)} &\leq& C\ \|\nabla \Delta m\|_{L^2(\Omega)},\label{ES10}\\
			\|D^2m\|_{L^3(\Omega)} &\leq& C\ \|\Delta m\|^\frac{1}{2}_{L^2(\Omega)} \|\nabla \Delta m\|^\frac{1}{2}_{L^2(\Omega)}.\label{ES9} 		
		\end{eqnarray}
	\end{Pro}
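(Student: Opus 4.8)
The plan is to derive all six inequalities from the equivalent-norm estimate of Lemma \ref{EN}, the Sobolev embeddings that are valid precisely because $n\in\{2,3\}$, the Poincaré–Wirtinger inequality, and a single Gagliardo–Nirenberg interpolation, with the homogeneous Neumann condition $\partial_\eta m|_{\partial\Omega}=0$ playing the decisive role of eliminating the lower-order terms. I would begin with \eqref{ES2}: since $n\le 3$ we have the continuous embedding $H^2(\Omega)\hookrightarrow L^\infty(\Omega)$, so $\|m\|_{L^\infty}\le C\|m\|_{H^2}$, and Lemma \ref{EN} with $k=0$ together with $a+b\le\sqrt 2\,(a^2+b^2)^{1/2}$ yields \eqref{ES2} at once (this is the one estimate whose right-hand side legitimately retains $\|m\|_{L^2}$).

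The crux of the remaining $H^2$-estimates is to remove the $\|m\|_{L^2}$ term that Lemma \ref{EN} produces. For this I would record two auxiliary bounds for $m\in H^2(\Omega)$ with $\partial_\eta m=0$. Writing $\bar m=\tfrac1{|\Omega|}\int_\Omega m$ and $\tilde m=m-\bar m$ (a constant shift that leaves $\nabla m,\ D^2m,\ \Delta m$ unchanged and preserves the Neumann condition), integration by parts gives $\|\nabla\tilde m\|_{L^2}^2=-\int_\Omega\tilde m\,\Delta\tilde m$ since $\int_{\partial\Omega}\tilde m\,\partial_\eta\tilde m=0$; combined with Poincaré–Wirtinger $\|\tilde m\|_{L^2}\le C\|\nabla\tilde m\|_{L^2}$ one obtains first $\|\nabla m\|_{L^2}=\|\nabla\tilde m\|_{L^2}\le C\|\Delta m\|_{L^2}$ and then $\|\tilde m\|_{L^2}\le C\|\Delta m\|_{L^2}$. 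Feeding these into Lemma \ref{EN} with $k=0$ applied to $\tilde m$ gives $\|\tilde m\|_{H^2}\le C\|\Delta m\|_{L^2}$, and since $\|D^2m\|_{L^2}=\|D^2\tilde m\|_{L^2}\le\|\tilde m\|_{H^2}$ this is exactly \eqref{ES4}. Inequality \eqref{ES3} then follows by applying the Sobolev embedding $H^1(\Omega)\hookrightarrow L^s(\Omega)$—valid for $s\le 6$ when $n=3$ and for all $s<\infty$ when $n=2$—to the vector field $\nabla m$: $\|\nabla m\|_{L^s}\le C\|\nabla m\|_{H^1}=C(\|\nabla m\|_{L^2}+\|D^2m\|_{L^2})\le C\|\Delta m\|_{L^2}$.

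For the two $H^3$-estimates I would first prove \eqref{ES7}: the Neumann condition gives $\int_\Omega\Delta m=\int_{\partial\Omega}\partial_\eta m=0$, so $\Delta m$ has zero mean and Poincaré–Wirtinger applied to $\Delta m\in H^1(\Omega)$ yields $\|\Delta m\|_{L^2}\le C\|\nabla\Delta m\|_{L^2}$. Next, applying Lemma \ref{EN} with $k=1$ to $\tilde m$ gives $\|\tilde m\|_{H^3}\le C(\|\tilde m\|_{L^2}+\|\Delta m\|_{L^2}+\|\nabla\Delta m\|_{L^2})$; chaining $\|\tilde m\|_{L^2}\le C\|\Delta m\|_{L^2}\le C\|\nabla\Delta m\|_{L^2}$ from the previous steps collapses the right-hand side to $C\|\nabla\Delta m\|_{L^2}$, which bounds $\|D^3m\|_{L^2}=\|D^3\tilde m\|_{L^2}$ and proves \eqref{ES10}. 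Finally, \eqref{ES9} comes from the Gagliardo–Nirenberg inequality for the field $v=D^2m$, in the additive form $\|v\|_{L^3}\le C\|\nabla v\|_{L^2}^{\,n/6}\|v\|_{L^2}^{\,1-n/6}+C\|v\|_{L^2}$; substituting $\nabla v=D^3m$ and invoking \eqref{ES4}, \eqref{ES7}, \eqref{ES10} converts every resulting factor into powers of $\|\Delta m\|_{L^2}$ and $\|\nabla\Delta m\|_{L^2}$, and in the borderline case $n=2$ (where $\theta=\tfrac13$) one uses \eqref{ES7} once more to trade the surplus factor $\|\Delta m\|_{L^2}^{1/6}$ for $\|\nabla\Delta m\|_{L^2}^{1/6}$, so that the exponents collapse to the stated $(\tfrac12,\tfrac12)$ in both dimensions.

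The main obstacle is not any single embedding but the bookkeeping needed to eliminate lower-order norms: Lemma \ref{EN} inevitably carries a $\|m\|_{L^2}$ (resp. $\|\Delta m\|_{L^2}$) term, whereas \eqref{ES3}, \eqref{ES4}, \eqref{ES10} are scale-critical, their right-hand sides containing only the top-order quantity. Absorbing the lower-order terms relies essentially on the Neumann boundary condition—through the mean-zero property of $\Delta m$ and the freedom to subtract the constant $\bar m$—together with Poincaré–Wirtinger; without the boundary condition these inequalities fail, since constants constitute the nontrivial kernel. The only genuinely dimension-sensitive point is the Gagliardo–Nirenberg exponent $\theta=n/6$ in \eqref{ES9}, which is exactly why the extra application of \eqref{ES7} is required to unify the cases $n=2$ and $n=3$.
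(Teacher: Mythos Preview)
The paper does not prove this proposition; it merely cites Proposition~2.1 of \cite{SPSK}. Your argument is correct and self-contained: the use of Lemma~\ref{EN}, the mean-subtraction trick combined with Poincar\'e--Wirtinger to eliminate the lower-order terms (which is indeed where the Neumann condition is essential), and the Gagliardo--Nirenberg step with the exponent adjustment via \eqref{ES7} for \eqref{ES9} all go through as you describe, so you have supplied precisely the details the paper omits.
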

	The proof of this proposition can be found in Proposition 2.1, \cite{SPSK}.
	
	Some dimension specific particular cases of the Gagliardo-Nirenberg inequality(see, \cite{LN}) combined with estimates \eqref{ES3}-\eqref{ES9}  are given as follows: 
	\begin{numcases}{(n=2)}
		\|\nabla m\|_{L^4(\Omega)} \leq C\ \|\nabla m\|^{\frac{1}{2}}_{L^2(\Omega)} \|\Delta m\|^{\frac{1}{2}}_{L^2(\Omega)}, \ \ \label{ES5}	\\
		\|D^2m\|_{L^4(\Omega)}\leq C\ \|\Delta m\|^{\frac{1}{2}}_{L^2(\Omega)}\ \|\nabla \Delta m\|^{\frac{1}{2}}_{L^2(\Omega)}.\label{ES20}	
	\end{numcases}
	\begin{numcases}{(n=2, 3)}
		\|m\|_{L^\infty(\Omega)}\ \  \leq C\ \|m\|_{L^6(\Omega)}^{\frac{1}{2}} \ \|m\|_{H^2(\Omega)}^{\frac{1}{2}}. \label{ES16}
	\end{numcases}
	
	We have consistently utilized the following inequalities to evaluate the $L^2(0,T;H^1(\Omega))$ norm of various cross products and non-linear terms throughout the paper. 
	\begin{Lem}\label{PROP2}
		Let $\Omega$ be a regular bounded domain of $\mathbb{R}^n$ for $n=2,3$. Then there exists a constant $C>0$ depending on $\Omega$ and $T$ such that 
		\begin{enumerate}[(\roman*)]
			\item  for $\xi \in L^\infty(0,T;H^2(\Omega))$ and $\zeta\in L^2(0,T;H^3(\Omega))$,
			\begin{equation}\label{EE-2}
				\|\xi \times \Delta \zeta\|^2_{L^2(0,T;H^1(\Omega))}\leq  C\ \|\xi\|^2_{L^\infty(0,T;H^2(\Omega))}\  \|\zeta\|^2_{L^2(0,T;H^3(\Omega))},
			\end{equation}
			\item for $\xi\in L^\infty(0,T;H^2(\Omega))$ and $\omega \in L^2(0,T;H^1(\Omega))$,
			\begin{equation}\label{EE-3}
				\|\xi \times \omega\|^2_{L^2(0,T;H^1(\Omega))} \leq  C\ \|\xi\|^2_{L^\infty(0,T;H^2(\Omega))}\  \|\omega\|^2_{L^2(0,T;H^1(\Omega))},
			\end{equation}
			\item  for $\xi,\zeta,\omega \in L^\infty(0,T;H^2(\Omega))$,
			\begin{equation}\label{EE-6}
				\|(\xi\cdot \zeta)\ \omega\|^2_{L^2(0,T;H^1(\Omega))} \leq C\ \|\xi\|^2_{L^\infty(0,T;H^2(\Omega))}\ \|\zeta\|^2_{L^\infty(0,T;H^2(\Omega))}\ \|\omega\|^2_{L^2(0,T;H^2(\Omega))}.
			\end{equation}
		\end{enumerate}
	\end{Lem}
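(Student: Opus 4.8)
The plan is to estimate each term by interpolating the cross-product and product structure between $L^2$ and $L^\infty$ factors, using the equivalent $H^k$ norms from Lemma \ref{EN} together with the Gagliardo--Nirenberg inequalities \eqref{ES2}--\eqref{ES9} and \eqref{ES16}. Throughout, I would write the $H^1$ norm of a term $w$ as $\|w\|_{L^2(\Omega)}+\|\nabla w\|_{L^2(\Omega)}$ and bound the two pieces separately, then integrate in time at the end.

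For part (i), expanding $\nabla(\xi\times\Delta\zeta)=\nabla\xi\times\Delta\zeta+\xi\times\nabla\Delta\zeta$, I would apply Lemma \ref{CPP} to each summand: for the first, $\|\nabla\xi\times\Delta\zeta\|_{L^2}\le\|\nabla\xi\|_{L^6}\|\Delta\zeta\|_{L^3}$, then control $\|\nabla\xi\|_{L^6}$ by $\|\Delta\xi\|_{L^2}$ via \eqref{ES3} and $\|\Delta\zeta\|_{L^3}$ by $\|\Delta\zeta\|_{L^2}^{1/2}\|\nabla\Delta\zeta\|_{L^2}^{1/2}$ (an $H^1\hookrightarrow L^3$ Gagliardo--Nirenberg estimate, or \eqref{ES9}-type reasoning); for the second, $\|\xi\times\nabla\Delta\zeta\|_{L^2}\le\|\xi\|_{L^\infty}\|\nabla\Delta\zeta\|_{L^2}$ with $\|\xi\|_{L^\infty}\lesssim\|\xi\|_{H^2}$ by \eqref{ES2} or \eqref{ES16}. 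Taking the $L^\infty$-in-time norm on the $\xi$-factors and the $L^2$-in-time norm on the $\zeta$-factors, and using that $\|\Delta\zeta\|_{L^\infty(0,T;L^2)}$ is dominated by $\|\zeta\|_{L^2(0,T;H^3)}$ only after a Cauchy--Schwarz in time — here I would instead keep everything under a single time integral, bounding $\|\Delta\zeta(t)\|_{L^2}^{1/2}\|\nabla\Delta\zeta(t)\|_{L^2}^{1/2}\le\|\zeta(t)\|_{H^3}$ pointwise so that squaring and integrating gives $\|\zeta\|_{L^2(0,T;H^3)}^2$ directly, with the $\|\Delta\xi\|_{L^2}$ factor pulled out as $\|\xi\|_{L^\infty(0,T;H^2)}$.

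Part (ii) is analogous but simpler: $\nabla(\xi\times\omega)=\nabla\xi\times\omega+\xi\times\nabla\omega$, and I would bound $\|\nabla\xi\times\omega\|_{L^2}\le\|\nabla\xi\|_{L^\infty}\|\omega\|_{L^2}$ with $\|\nabla\xi\|_{L^\infty}\lesssim\|\xi\|_{H^2}$ in dimension $n=2,3$ (here $n=3$ is the borderline case and one uses $W^{1,p}\hookrightarrow L^\infty$ for $p>n$ applied to $\nabla\xi\in W^{1,2}$ only for $n=2$; for $n=3$ one instead writes $\|\nabla\xi\times\omega\|_{L^2}\le\|\nabla\xi\|_{L^3}\|\omega\|_{L^6}\lesssim\|\xi\|_{H^2}\|\omega\|_{H^1}$ using \eqref{ES3} and the Sobolev embedding $H^1\hookrightarrow L^6$), and $\|\xi\times\nabla\omega\|_{L^2}\le\|\xi\|_{L^\infty}\|\nabla\omega\|_{L^2}\lesssim\|\xi\|_{H^2}\|\omega\|_{H^1}$ by \eqref{ES2}. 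Squaring and integrating in time, with $\|\xi\|_{L^\infty(0,T;H^2)}$ pulled out, yields \eqref{EE-3}. For part (iii), $\nabla((\xi\cdot\zeta)\omega)=(\nabla\xi\cdot\zeta)\omega+(\xi\cdot\nabla\zeta)\omega+(\xi\cdot\zeta)\nabla\omega$; I would bound each term in $L^2$ by placing two factors in $L^\infty$ (controlled by $H^2$ via \eqref{ES2}) and keeping the remaining one — $\|\nabla\xi\|_{L^2}$, $\|\nabla\zeta\|_{L^2}$, or $\|\nabla\omega\|_{L^2}$ respectively, each $\le\|\cdot\|_{H^2}$ — plus the zeroth-order term $(\xi\cdot\zeta)\omega$ handled the same way, giving \eqref{EE-6} after squaring and integrating with the two $L^\infty(0,T;H^2)$ factors extracted.

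The main obstacle is the borderline Sobolev exponent in dimension $n=3$: $H^2(\Omega)$ does \emph{not} embed into $W^{1,\infty}(\Omega)$, so the naive "put the gradient factor in $L^\infty$" argument fails for the first term of part (ii) and one must instead split the integrability as $L^3\times L^6$ and invoke $H^1\hookrightarrow L^6$ together with $\|\nabla\xi\|_{L^3}\lesssim\|\Delta\xi\|_{L^2}$ from \eqref{ES3}. A similar care is needed in part (i) for the term $\nabla\xi\times\Delta\zeta$, where the pairing $L^6\times L^3$ is forced and the $L^3$ norm of $\Delta\zeta$ must be interpolated between $L^2$ and $H^1$ rather than simply bounded by $H^1$, since only then does the product of time-integrability exponents close to give the clean $L^\infty_t H^2\times L^2_t H^3$ bound on the right-hand side. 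Once the correct Hölder pairings are fixed, every remaining step is a routine application of Lemma \ref{EN}, Proposition \ref{PROP1}, and Hölder's inequality in time.
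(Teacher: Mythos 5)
Your proposal is correct and follows the route the paper itself delegates to its reference \cite{SPSK2}: expand the gradient by the product rule, choose H\"older pairings term by term, control each factor by the embeddings of Proposition \ref{PROP1} and Lemma \ref{EN}, and finally place the $\xi$-type factors in $L^\infty$ in time. One small inaccuracy: in part (ii) you assert that for $n=2$ one may bound $\|\nabla\xi\|_{L^\infty(\Omega)}\lesssim\|\xi\|_{H^2(\Omega)}$ via $W^{1,p}\hookrightarrow L^\infty$ for $p>n$; this fails, since $\nabla\xi\in H^1(\Omega)$ and $H^1(\Omega)\hookrightarrow L^\infty(\Omega)$ is false already in dimension $2$ (the critical case). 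This does not damage the argument, because the $L^3\times L^6$ pairing $\|\nabla\xi\times\omega\|_{L^2}\le\|\nabla\xi\|_{L^3}\|\omega\|_{L^6}\lesssim\|\Delta\xi\|_{L^2}\|\omega\|_{H^1}$ that you give for $n=3$ works verbatim for $n=2$ as well, so you should simply use it in both dimensions. Likewise, in part (i) the interpolation of $\|\Delta\zeta\|_{L^3}$ between $L^2$ and $H^1$ is not actually needed: the crude bound $\|\Delta\zeta(t)\|_{L^3}\lesssim\|\zeta(t)\|_{H^3}$ already closes, since the whole product is estimated pointwise in $t$ by $\|\xi(t)\|_{H^2}\|\zeta(t)\|_{H^3}$ before squaring and integrating.
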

		The paper also requires $L^2(0,T;H^1(\Omega)^*)$ norm estimates for various terms as given below.
		\begin{Lem}\label{L-CP}	
			Let $\Omega$ be a bounded subset of $\mathbb{R}^2$ with smooth boundary. Then there exists a constant $C>0$ such that 
			\begin{enumerate}[(\roman*)]
				\item for $\xi \in L^2(0,T;H^1(\Omega))$ and $\zeta \in L^\infty(0,T;H^2(\Omega))$ with $\frac{\partial \xi}{\partial \eta}=\frac{\partial \zeta}{\partial \eta}=0$,
				\begin{equation}\label{AEE-3}
					\|\Delta (\xi\times \zeta)\|_{L^2(0,T;H^1(\Omega)^*)} \leq  C\ \|\xi\|_{L^2(0,T;H^1(\Omega))}\  \|\zeta\|_{L^\infty(0,T;H^2(\Omega))},
				\end{equation}
				\item  for $\xi \in L^\infty(0,T;H^2(\Omega))$ and $\omega\in L^2(0,T;H^1(\Omega))$,
				\begin{equation}\label{AEE-4}
					\|\Delta \xi\times \omega \|_{L^2(0,T;H^1(\Omega)^*)}\leq C\ \|\xi\|_{L^\infty(0,T;H^2(\Omega))} \ \|\omega\|_{L^2(0,T;H^1(\Omega))},
				\end{equation} 
				\item  for $\xi \in L^\infty(0,T;L^2(\Omega))$ and $\omega\in L^2(0,T;H^1(\Omega))$,
				\begin{equation}\label{AEE-8}
					\| \xi\times \omega \|_{L^2(0,T;H^1(\Omega)^*)}\leq C\ \|\xi\|_{L^\infty(0,T;H^2(\Omega))} \ \|\omega\|_{L^2(0,T;H^1(\Omega))},
				\end{equation} 
				\item  for $\xi,\zeta \in L^\infty(0,T;H^1(\Omega))$ and $\omega\in L^2(0,T;L^2(\Omega))$,
				\begin{equation}\label{AEE-9}
					\| (\xi\cdot \zeta) \omega \|_{L^2(0,T;H^1(\Omega)^*)}\leq C\ \|\xi\|_{L^\infty(0,T;H^1(\Omega))} \ \|\zeta\|_{L^\infty(0,T;H^1(\Omega))} \  \|\omega\|_{L^2(0,T;L^2(\Omega))},
				\end{equation} 
				\item  for $\xi,\zeta \in L^\infty(0,T;H^1(\Omega))$ and $\omega\in L^2(0,T;L^2(\Omega))$,
				\begin{equation}\label{AEE-10}
					\| (\xi\cdot \zeta) \ \omega \|_{L^2(0,T;H^1(\Omega)^*)}\leq C\ \|\xi\|_{L^\infty(0,T;H^1(\Omega))} \ \|\omega\|_{L^2(0,T;L^2(\Omega))} \ \|\zeta\|_{L^\infty(0,T;H^1(\Omega))}.
				\end{equation} 
			\end{enumerate}
		\end{Lem}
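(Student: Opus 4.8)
The plan is to compute the $L^2(0,T;H^1(\Omega)^*)$ norms through the duality characterization
$$\|F(t)\|_{H^1(\Omega)^*}=\sup\left\{\langle F(t),\psi\rangle : \psi\in H^1(\Omega),\ \|\psi\|_{H^1(\Omega)}\le 1\right\},$$
to establish in each case a pointwise-in-time bound of the form $\|F(t)\|_{H^1(\Omega)^*}\le C\prod_j\|f_j(t)\|$, and then to integrate in $t$, pairing each $L^\infty(0,T;\cdot)$ factor against the single $L^2(0,T;\cdot)$ factor by H\"older's inequality in time (which produces no extra power of $T$). The only dimension-specific ingredients are the two-dimensional Sobolev embeddings $H^1(\Omega)\hookrightarrow L^p(\Omega)$ for every finite $p$ (in particular $L^4$ and $L^8$), the bounds $\|\nabla m\|_{L^4(\Omega)}\le C\|m\|_{H^2(\Omega)}$ from \eqref{ES3} and $H^2(\Omega)\hookrightarrow L^\infty(\Omega)$ from \eqref{ES2}, together with the cross-product H\"older inequality of Lemma \ref{CPP} and the identity $(a\times b)\cdot c=a\cdot(b\times c)$.

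For the two estimates carrying a Laplacian, \eqref{AEE-3} and \eqref{AEE-4}, I would exploit the negative order of the target norm. For \eqref{AEE-3}, since $\partial\xi/\partial\eta=\partial\zeta/\partial\eta=0$ the Leibniz rule gives $\partial(\xi\times\zeta)/\partial\eta=0$, so integration by parts against a test function $\psi$ yields $\langle\Delta(\xi\times\zeta),\psi\rangle=-\int_\Omega\nabla(\xi\times\zeta)\cdot\nabla\psi\,dx$ with no boundary term; hence $\|\Delta(\xi\times\zeta)\|_{H^1(\Omega)^*}\le\|\nabla(\xi\times\zeta)\|_{L^2(\Omega)}$, and expanding $\nabla(\xi\times\zeta)=\nabla\xi\times\zeta+\xi\times\nabla\zeta$ I would bound the two resulting terms by $\|\nabla\xi\|_{L^2}\|\zeta\|_{L^\infty}$ and $\|\xi\|_{L^4}\|\nabla\zeta\|_{L^4}$, respectively, invoking \eqref{ES2}, \eqref{ES3} and $H^1(\Omega)\hookrightarrow L^4(\Omega)$. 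For \eqref{AEE-4}, rather than differentiate any product I would observe, via Lemma \ref{CPP} with $p=4/3$, that $\|\Delta\xi\times\omega\|_{L^{4/3}(\Omega)}\le\|\Delta\xi\|_{L^2(\Omega)}\|\omega\|_{L^4(\Omega)}\le C\|\xi\|_{H^2(\Omega)}\|\omega\|_{H^1(\Omega)}$, and then use the continuous embedding $L^{4/3}(\Omega)\hookrightarrow H^1(\Omega)^*$ valid in dimension two (the dual of $H^1\hookrightarrow L^4$). This sidesteps any second derivative of the product and any boundary term.

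The remaining three estimates are pure dual-pairing computations. For \eqref{AEE-8} the identity $(\xi\times\omega)\cdot\psi=\xi\cdot(\omega\times\psi)$ gives $\langle\xi\times\omega,\psi\rangle\le\|\xi\|_{L^2}\|\omega\|_{L^4}\|\psi\|_{L^4}\le C\|\xi\|_{L^2}\|\omega\|_{H^1}\|\psi\|_{H^1}$; this proves the claim with only $\|\xi\|_{L^2}$ on the right, and the stated form with $\|\xi\|_{L^\infty(0,T;H^2)}$ follows a fortiori from $\|\xi\|_{L^2(\Omega)}\le\|\xi\|_{H^2(\Omega)}$ (indeed, only the $L^2$-norm of $\xi$ is actually needed). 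For \eqref{AEE-9} and \eqref{AEE-10} I would write $(\xi\cdot\zeta)(\omega\cdot\psi)=\omega\cdot\left[(\xi\cdot\zeta)\psi\right]$, so that $\langle(\xi\cdot\zeta)\omega,\psi\rangle\le\|\omega\|_{L^2}\|(\xi\cdot\zeta)\psi\|_{L^2}\le\|\omega\|_{L^2}\|\xi\|_{L^8}\|\zeta\|_{L^8}\|\psi\|_{L^4}\le C\|\xi\|_{H^1}\|\zeta\|_{H^1}\|\omega\|_{L^2}\|\psi\|_{H^1}$, using $H^1\hookrightarrow L^8$ and $H^1\hookrightarrow L^4$ in two dimensions. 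Taking the supremum over $\|\psi\|_{H^1}\le1$ and integrating in time completes each estimate.

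I expect the main obstacle to be \eqref{AEE-3} and \eqref{AEE-4}: these are precisely the terms whose operand has too little regularity for a naive $L^2$ bound, so the argument must genuinely use the weaker $H^1(\Omega)^*$ norm---via integration by parts in \eqref{AEE-3} and via the embedding $L^{4/3}(\Omega)\hookrightarrow H^1(\Omega)^*$ in \eqref{AEE-4}. It is also here that the restriction to $\mathbb{R}^2$ is essential: the embeddings $H^1\hookrightarrow L^4$, $H^1\hookrightarrow L^8$ and $H^2\hookrightarrow L^\infty$ that render every product integrable fail in their required strength in three dimensions, which is why Lemma \ref{L-CP} is stated only for planar domains.
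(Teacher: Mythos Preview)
Your proposal is correct and follows the standard duality-plus-Sobolev-embedding route: integrate by parts to shed a derivative in \eqref{AEE-3}, use $L^{4/3}(\Omega)\hookrightarrow H^1(\Omega)^*$ by duality for \eqref{AEE-4}, and apply H\"older with the two-dimensional embeddings $H^1\hookrightarrow L^p$ ($p<\infty$) for the remaining items. The paper does not give its own argument for Lemma~\ref{L-CP} but simply refers to \cite{SPSK2}, remarking that the estimates there ``can be derived by similar arguments''; your treatment is exactly the kind of computation one expects in that reference, so there is nothing to contrast.
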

		The proofs of Lemmas \ref{PROP2} and \ref{L-CP} are available in \cite{SPSK2}. While some of the estimates may not be identical to those in \cite{SPSK2}, they can be derived by similar arguments.

\section{Strong Solution and Regular Solution}\label{S-WSRS}

\subsection {Existence of Strong Solution}\label{SS-EOSS}
In this subsection we will prove the strong solvability of system \eqref{NLP} by applying the method of Galerkin approximation. 
\begin{proof}[Proof of Theorem \ref{T-SS}]
	Let $\xi_j$ be the $j^{th}$ eigenfunction corresponding to the eigenvalue $\rho_j$ of the operator $-\Delta + I$ with Neumann boundary condition, that is, $(-\Delta +I) \xi_j=\rho_j \xi_j$ with $\frac{\partial \xi_j}{\partial \eta}\big|_{\partial \Omega}=0$ such that $\{\xi_j\}^\infty_{j=1}$ forms a orthonormal basis of $L^2(\Omega)$. Let $W_n:= span \{\xi_1,\xi_2,...,\xi_n\}$ be the finite dimenisonal subspace of $L^2(\Omega)$ and $\mathbb{P}_n:L^2(\Omega)\to W_n$ be the orthogonal projection. Consider the following Galerkin system
	
		\begin{equation}\label{GA}
		\begin{cases}
			(m_n)_t- \Delta m_n=\mathbb{P}_n \left[ m_n \times \Delta m_n + m_n \times u-\big(1+|m_n|^2\big)\ m_n +u \right],\\
			m_n(0)=\mathbb{P}_n(m_0)
		\end{cases}
	\end{equation}
	where $m_n(t)=\sum_{k=1}^{n}a_{kn}(t)\ \xi_k\in W_n$ and $\mathbb{P}_n(m_0)=\sum_{k=1}^n b_{kn} \ \xi_k$. Then system \eqref{GA} is equivalent to the following system of ordinary differential equations
	\begin{equation}\label{ODE}  
		\frac{d}{dt}a_{kn}(t)= F_k(t,a_{n}), \ \ a_{k}(0)=b_{k}, \ \ \ \ \ k=1,2,...,n,
	\end{equation}
	where $a_{n}=(a_{1n}(t),a_{2n}(t),...,a_{nn}(t))^T$ and 
	\begin{align*}
		F_k(t,a_{n})& = -\rho_k\ a_{kn}(t)- \sum_{r,s=1}^{n} (\rho_s-1) \big(a_{rn}(t)\times a_{sn}(t)\big)\int_\Omega \xi_r \ \xi_s \ \xi_k\ dx \\
		& +  \sum_{r=1}^{n} \int_\Omega \big(a_{rn}(t)\times u\big)\ \xi_r\  \xi_k\  dx - \sum_{r,s,v=1}^{n} \big(a_{rn}(t) \ a_{sn}(t)\ a_{vn}(t)\big)\int_\Omega  \xi_r \ \xi_s \ \xi_v\ \xi_k\ dx + \int_{\Omega}u\  \xi_k\ dx.	
	\end{align*}	
	The dense embedding of $C([0,T];L^2(\Omega))$ in $L^2(0,T;L^2(\Omega))$ helps us to assume that the control function $u \in C([0,T];L^2(\Omega))$. Now, considering the continuity of $F_k(t,a_n)$ in $[0,T]\times \mathbb{R}^n$, the existence theory of ODEs (see \cite{PH}) ensures the existence of a solution $a_n \in C^1([0,t_m);\mathbb{R}^n)$, with $t_m$ being the maximal time of existence. However, we  will provide \emph{a priori} estimates to show that $|a_n(t)|$ remains bounded within $[0, T]$, that is, the solution of the ODE \eqref{ODE} exists globally.
	
	\noindent Taking $L^2$ inner product of equation \eqref{GA} with $m_n$ and using the fact that $a\cdot (a\times b)=0$, we find
	\begin{equation}\label{SS-E1}
		 \frac{d}{dt} \|m_n(t)\|^2_{L^2(\Omega)} + \|\nabla m_n(t)\|^2_{L^2(\Omega)} +  \|m_n(t)\|^2_{L^2(\Omega)} + \|m_n(t)\|^4_{L^4(\Omega)} \leq \|u(t)\|^2_{L^2(\Omega)}.
	\end{equation}
	Now, an $L^2$ inner product of equation \eqref{GA} with $-\Delta m_n$ gives us
	\begin{equation}\label{SS-E2}
		\frac{1}{2} \frac{d}{dt} \|\nabla m_n(t)\|^2_{L^2(\Omega)} + \|\Delta m_n(t)\|^2_{L^2(\Omega)} = -\ (m_n \times u, \Delta m_n) + \big(\big(1+|m_n|^2\big)m_n, \Delta m_n\big) - (u, \Delta m_n).
	\end{equation}
Next, upon analyzing the second term on the right-hand side, it becomes evident that it is non-positive for all time $t>0$. This conclusion is drawn by performing integration by parts across the spatial domain as given below:
	\begin{flalign*}
		\big(\big(1+|m_n|^2\big)m_n, \Delta m_n\big) &= - \sum_{i=1}^{n} \int_{\Omega} \left[\big(1+|m_n|^2\big) \frac{\partial m_n}{\partial x_i} \cdot \frac{\partial m_n}{\partial x_i} + 2 \left(m_n \cdot \frac{\partial m_n}{\partial x_i}\right) m_n \cdot \frac{\partial m_n}{\partial x_i} \right]\ dx&\\
		&= -\int_{\Omega} \left(\big(1+|m_n|^2\big) \ |\nabla m_n|^2 + 2\ |m_n \cdot \nabla m_n|^2\right)\ dx\leq 0.
	\end{flalign*}
For the last term $(u,\Delta m_n)$ of \eqref{SS-E2} doing a straight forward estimation, we can find
\begin{flalign*}
	(u,\Delta m_n)\leq \|u(t)\|_{L^2(\Omega)} \ \|\Delta m_n(t)\|_{L^2(\Omega)} \leq \epsilon \ \|\Delta m_n(t)\|^2_{L^2(\Omega)}+ C(\epsilon) \ \|u(t)\|^2_{L^2(\Omega)}.
\end{flalign*} 

The estimation for $(m_n \times u, \Delta m_n)$ will be different for dimension $n=1,2$ and $3$. First, for $n=1$ choosing $s=2$ and then using the embedding $H^1(\Omega)\hookrightarrow L^\infty(\Omega)$, we find
$$(m_n \times u, \Delta m_n) \leq \|m_n(t)\|_{L^\infty(\Omega)} \ \|u(t)\|_{L^2(\Omega)} \ \|\Delta m_n(t)\|_{L^2(\Omega)}\leq \epsilon \ \|\Delta m_n(t)\|^2_{L^2(\Omega)} + C(\epsilon) \   \|u(t)\|^2_{L^2(\Omega)}\ \|m_n(t)\|^2_{H^1(\Omega)}.$$
For $n=2$, choose $s=2+\delta$ for some real number $\delta >0$. Moreover, for $n=3$, choose any $s=3$. Applying H\"older's inequality and the continuous embedding $H^1(\Omega) \hookrightarrow L^s(\Omega)$, we obtain 
\begin{flalign*}
	(m_n \times u, \Delta m_n) &\leq \|m_n(t)\|_{L^{\frac{2s}{s-2}}(\Omega)} \ \|u(t)\|_{L^s(\Omega)}\ \|\Delta m_n(t)\|_{L^2(\Omega)}&\\
	&\leq \epsilon \ \|\Delta m_n(t)\|^2_{L^2(\Omega)} + C(\epsilon)\ \|u(t)\|^2_{L^s(\Omega)} \ \|m_n(t)\|^2_{H^1(\Omega)} .
\end{flalign*}
Substituting all these estimates in equation \eqref{SS-E2} and choosing a suitable value for $\epsilon$, we get
	\begin{align}\label{SS-E3}
	\frac{d}{dt} \|\nabla m_n(t)\|^2_{L^2(\Omega)} &+ \|\Delta m_n(t)\|^2_{L^2(\Omega)} + \int_{\Omega} \left(\big(1+|m_n|^2\big) \ |\nabla m_n|^2 + 2\ |m_n \cdot \nabla m_n|^2\right)\ dx&\nonumber\\
	&\hspace{0.5cm}\leq C\ \|u(t)\|^2_{L^2(\Omega)} + C \ \|u(t)\|^2_{L^s(\Omega)} \  \|m_n(t)\|^2_{H^1(\Omega)},
	\end{align}
	where $s$ takes values as defined above depending on $n$. Now, combining estimates \eqref{SS-E1} and \eqref{SS-E3}, we left with the following
\begin{equation*}
	\frac{d}{dt} \left(\|m_n(t)\|^2_{L^2(\Omega)}+\|\nabla m_n(t)\|^2_{L^2(\Omega)} \right) + \|\Delta m_n(t)\|^2_{L^2(\Omega)} \leq C\ \|u(t)\|^2_{L^2(\Omega)} + C\ \|u(t)\|^2_{L^s(\Omega)}\  \|m_n(t)\|^2_{H^1(\Omega)}.		
\end{equation*}
By implementing Gronwall's inequality and the estimate $\|\mathbb{P}_n(m_0)\|^2_{H^1(\Omega)} \leq \|m_0\|^2_{H^1(\Omega)}$, we obtain
\begin{eqnarray}\label{SS-E5}
	\lefteqn{\|m_n(t)\|^2_{L^2(\Omega)}+\|\nabla m_n(t)\|^2_{L^2(\Omega)} + \int_0^t \|\Delta m_n(\tau)\|^2_{L^2(\Omega)} d\tau } \nonumber\\
	&&\leq C \left(\|m_0\|^2_{H^1(\Omega)} + \ \|u\|^2_{L^2(0,T;L^2(\Omega))}\right) \exp\left\{C\ \|u\|^2_{L^2(0,T;L^s(\Omega))}\right\}\ \ \ \ \ \forall \ t\in [0,T].
\end{eqnarray}
Therefore, by virtue of Lemma \ref{EN}, the sequence $\{m_n\}$ is uniformly bounded in $L^\infty(0,T;H^1(\Omega))$ and $L^2(0,T;H^2(\Omega))$.

Next, we will estimate a uniform bound for the sequence $\{(m_n)_t\}$. For the first term $m_n \times \Delta m_n$, considering the $L^{\frac{4}{3}}(0,T;L^2(\Omega))$ norm and then applying H\"older's inequality followed by Gagliardo-Nirenberg interpolation inequality \eqref{ES16}, we obtain
\begin{flalign*}
	&\int_0^T \left(\int_{\Omega} |m_n \times \Delta m_n|^2\ dx\right)^{\frac{2}{3}} dt \leq \int_0^T \|m_n(t)\|^{\frac{4}{3}}_{L^\infty(\Omega)} \ \|\Delta m_n(t)\|^{\frac{4}{3}}_{L^2(\Omega)}\ dt\\
	&\hspace{1cm}\leq C\ \int_0^T \|m_n(t)\|_{L^6(\Omega)}^{\frac{2}{3}}\ \|m_n(t)\|_{H^2(\Omega)}^{\frac{2}{3}}\ \|\Delta m_n(t)\|^{\frac{4}{3}}_{L^2(\Omega)}\ dt \leq C\ \int_0^T \|m_n(t)\|_{H^1(\Omega)}^{\frac{2}{3}}\ \|m_n(t)\|_{H^2(\Omega)}^{2}\ dt\\
	&\hspace{1cm}\leq C\ \|m_n\|_{L^\infty(0,T;H^1(\Omega))}^{\frac{2}{3}} \ \|m_n\|_{L^2(0,T;H^2(\Omega))}^{2}.
\end{flalign*}
The above estimate is proved for $n=3$, which holds true  for $n=1$ and $2$ as well. Similarly, estimating the $L^{\frac{4}{3}}(0,T;L^2(\Omega))$ norm of $m_n\times u$ and $\left(1+|m_n|^2\right)m_n$, we derive
\begin{flalign*}
	&\int_0^T \left(\int_{\Omega} |m_n \times u|^2\ dx\right)^{\frac{2}{3}} dt \leq \int_0^T \|m_n(t)\|_{L^{\frac{2s}{s-2}}(\Omega)}^{\frac{4}{3}} \ \|u(t)\|^{\frac{4}{3}}_{L^s(\Omega)}\ dt\\
	&\hspace{1.5cm}\leq C \int_0^T \|m_n(t)\|_{H^1(\Omega)}^{\frac{4}{3}} \ \|u(t)\|^{\frac{4}{3}}_{L^s(\Omega)}\ dt \leq C\  \|m_n\|_{L^\infty(0,T;H^1(\Omega))}^{\frac{4}{3}} \  \|u\|^{\frac{4}{3}}_{L^2(0,T;L^s(\Omega))},
\end{flalign*} 
where we use the same range of $s$ as defined in the statement of this theorem. Moreover, we yield
\begin{flalign*}
	\int_0^T& \left(\int_{\Omega}\left|\left(1+|m_n|^2\right)m_n\right|^2 dx\right)^{\frac{2}{3}} dt \leq  \int_0^T 2^{\frac{2}{3}} \left(\|m_n(t)\|_{L^2(\Omega)}+\|m_n(t)\|^3_{L^6(\Omega)}\right)^{\frac{4}{3}} dt\\
	&\leq \int_0^T 2 \left(\|m_n(t)\|^{\frac{4}{3}}_{L^2(\Omega)}+\|m_n(t)\|^{4}_{L^6(\Omega)}\right) dt \leq C\ \left(\|m_n\|^{\frac{4}{3}}_{L^\infty(0,T;L^2(\Omega))}+ \|m_n\|^{4}_{L^\infty(0,T;H^1(\Omega))}\right).
\end{flalign*}
Therefore, considering the $L^{\frac{4}{3}}(0,T;L^2(\Omega))$ norm in equation \eqref{GA} and then substituting the above estimates, we find that
\begin{align}\label{SS-E6}
\|(m_n)_t\|^{2}_{L^{\frac{4}{3}}(0,T;L^2(\Omega))} \leq & \	C\ \left[ \|m_n\|^2_{L^2(0,T;H^2(\Omega))}  +   \|m_n\|_{L^\infty(0,T;H^1(\Omega))} \ \|m_n\|_{L^2(0,T;H^2(\Omega))}^{3} +  \|m_n\|^2_{L^\infty(0,T;L^2(\Omega))} \right.\nonumber\\
&\hspace{0.4cm}\left. +\  \|m_n\|^{6}_{L^\infty(0,T;H^1(\Omega))}
+ \|u\|^2_{L^2(0,T;L^2(\Omega))} +\|m_n\|_{L^\infty(0,T;H^1(\Omega))}^{2} \  \|u\|^{2}_{L^2(0,T;L^s(\Omega))} \right].
\end{align}
Applying the uniform bounds for $\{m_n\}$ in $L^\infty(0,T;H^1(\Omega))$ and $L^2(0,T;H^2(\Omega))$ from estimate \eqref{SS-E5} in \eqref{SS-E6}, we obtain
\begin{flalign}\label{SS-E7}
	\|(m_n)_t\|^2_{L^{\frac{4}{3}}(0,T;L^2(\Omega))} \leq C\  \left(1+\|m_0\|^2_{H^1(\Omega)}+\|u\|^2_{L^2(0,T;L^2(\Omega))}\right)^3\ \exp\left\{C\ \|u\|^2_{L^2(0,T;L^s(\Omega))} \right\}.
\end{flalign}
As a result, utilizing the uniform bound estimates \eqref{SS-E5} and \eqref{SS-E6}, along with the Alaoglu weak$^*$ compactness theorem and the reflexive weak compactness theorem (Theorem 4.18, \cite{JCR}), we obtain the following convergences:
	\begin{eqnarray} \left\{\begin{array}{cccll}
		m_n &\overset{w}{\rightharpoonup} & m \  &\mbox{weakly in}& \ L^2(0,T;H^2(\Omega)),\\
		m_n &\overset{w^\ast}{\rightharpoonup} & m \ &\mbox{weak$^*$ in}& \ L^{\infty}(0,T;H^1(\Omega)),\\
		(m_n)_t &\overset{w}{\rightharpoonup} & m_t \ &\mbox{weakly in}& \ L^{\frac{4}{3}}(0,T;L^2(\Omega)), \ \ \mbox{as} \  \ n\to \infty. \label{wc}
	\end{array}\right.	
\end{eqnarray}
Applying the Aubin-Lions-Simon lemma (refer to Corollary 4 in \cite{JS}), we can extract a subsequence from $\{m_n\}$ (which we will also denote by $\{m_n\}$ for simplicity) that converges strongly to \(m\) in both \(L^2(0,T;H^1(\Omega))\) and \(C([0,T];L^2(\Omega))\). Using this and the weak-weak$^*$ convergences from \eqref{wc}, we can find the convergence of the approximated system to the main equation \eqref{NLP}. Let us show the convergence of one of the non-linear terms:
$$ \int_0^T \int_{\Omega} \mathbb{P}_n\big(m_n \times \Delta m_n\big)\cdot v\ dx\ dt\to \int_0^T\int_{\Omega}\big(m\times \Delta m\big)\cdot v\ dx\ dt.$$
For any $v=\sum_{k=1}^n a_k(t) \xi_k$ with $a_k\in C([0,T])$, using the fact that $\mathbb{P}_n (v)=v$ and applying H\"older's inequality, estimate \eqref{ES16} along with the embedding $H^1(\Omega) \rightharpoonup L^6(\Omega)$, we derive
\begin{flalign*}
	\lefteqn{\left|\int_0^T \int_{\Omega} \big(m_n \times \Delta m_n -m \times \Delta m\big)\cdot v\ dx\ dt\right|   } \\
	&\leq  \int_0^T \int_{\Omega}|m_n-m|\ |\Delta m_n|\ |v|\ dx\ dt + \underbrace{\left| \int_0^T\int_{\Omega}(v\times m)\cdot (\Delta m_n-\Delta m)\ dx\ dt\right|}_{:=\mathcal{H}}\\
	&\leq \sum_{k=1}^n\int_0^T\|m_n(t) -m(t) \|_{L^\infty(\Omega)}\ \|\Delta m_n(t)\|_{L^2(\Omega)}\ \|\xi_k\|_{L^2(\Omega)}\ |a_k(t)|\ dt+ \mathcal{H}\\
	&\leq \sum_{k=1}^n \int_0^T|a_k(t)|\ \|m_n(t)-m(t)\|^{\frac{1}{2}}_{L^6(\Omega)}\ \|m_n(t)-m(t)\|^{\frac{1}{2}}_{H^2(\Omega)}\ \|\Delta m_n(t)\|_{L^2(\Omega)}\ dt+\mathcal{H}\\
	&\leq  \sum_{k=1}^n\|a_k\|_{C([0,T])}\ \|m_n-m\|^{\frac{1}{2}}_{L^2(0,T;H^1(\Omega))} \|m_n-m\|^{\frac{1}{2}}_{L^2(0,T;H^2(\Omega))} \|m_n\|_{L^2(0,T;H^2(\Omega))}+\mathcal{H} \ \to 0\ \ \ \text{as}\ n\to\infty,
\end{flalign*}
where we have used the uniform boundedness of $\{m_n\}$ in $L^2(0,T;H^2(\Omega))$ and the strong convergence of $\{m_n\}$ in $L^2(0,T;H^1(\Omega))$. The convergence of the second term $\mathcal{H}$ relies on the fact that $\Delta m_n \rightharpoonup \Delta m$ weakly in $L^2(0,T;L^2(\Omega))$ and $v\times m\in L^2(0,T;L^2(\Omega))$. Convergences of the rest of the terms can be shown in a similar way. Moreover, since the set of such $v=\sum_{k=1}^n a_k(t) \xi_k$ with $a_k\in C([0,T])$ are dense in $L^4(0,T;L^2(\Omega))$, the equation in \eqref{NLP} holds as an equality in $L^{\frac{4}{3}}(0,T;L^2(\Omega))$. 

The verification of initial condition and the uniqueness (for n=2) of the strong solution can be demonstrated classically. Finally, combining estimate \eqref{SS-E5} and \eqref{SS-E7}, and using the weak sequential lower semi continuity, we find the required result \eqref{SSEE}.
\end{proof}

\subsection{Existence of Regular Solution}\label{SS-EORS}
In this subsection, we will demonstrate that for dimension $n=2$ and $3$, if we assume the control function and initial data to be more regular then we can find the state solution in higher regular spaces. 

\begin{proof}[Proof of Theorem \ref{T-RS}:]
In the Galerkin approximated system \eqref{GA}, considering $L^2$ inner product with $\Delta^2 m_n$, we have
\begin{align}\label{REQ-1}
	\frac{1}{2} \frac{d}{dt}\|\Delta m_n(t)\|^2_{L^2(\Omega)} &+ \|\nabla \Delta m_n(t)\|^2_{L^2(\Omega)} = - \Big(\nabla \left(m_n\times \Delta m_n\right), \nabla \Delta m_n\Big) - \Big( \nabla \big(m_n\times u\big),\nabla  \Delta m_n\Big)\ \ \ \ \nonumber\\
	&-  \big(\nabla u,\nabla \Delta m_n\big	) +\left(\nabla \left(\left(1+|m_n|^2\right)m_n\right),\nabla \Delta m_n\right) := \sum_{i=1}^{4} E_i. \ \ \ \ \ \ \ 
\end{align}
Since the inequalities outlined in Proposition \ref{PROP1} are dimension specific, certain estimates for $n=2$ and $n=3$ will be different. Initially, we will address the global solution scenario for $n=2$. 

\noindent \textbf{Part I:\ (Global existence for n=2)} \ For the first term $E_1$ utilizing the equality $a\cdot (a \times b)=0$, applying H\"older's inequality, and incorporating the estimates \eqref{ES5} and \eqref{ES20}, we determine 
\begin{flalign*}
	&E_1= - \int_\Omega \left(\nabla m_n(t) \times \Delta m_n(t)\right) \cdot \nabla \Delta m_n(t)\ dx \leq \|\nabla m_n(t)\|_{L^4(\Omega)} \ \|\Delta m_n(t)\|_{L^4(\Omega)} \ \|\nabla \Delta m_n(t)\|_{L^2(\Omega)}&\\
	& \leq C\ \|\nabla m_n(t)\|_{L^2(\Omega)}^{\frac{1}{2}}   \|\Delta m_n(t)\|_{L^2(\Omega)} \ \|\nabla \Delta m_n(t)\|_{L^2(\Omega)}^{\frac{3}{2}}	\leq \epsilon \ \|\nabla \Delta m_n(t)\|_{L^2(\Omega)}^2 + C\ \|\nabla m_n(t)\|^2_{L^2(\Omega)}\ \|\Delta m_n(t)\|^4_{L^2(\Omega)}. 
\end{flalign*}

For the second term $E_2$ and the third term $E_3$, the inequalities \eqref{ES2}, \eqref{ES3} and the embedding $H^1(\Omega)\hookrightarrow L^4(\Omega)$ lead to the estimates
\begin{flalign*}
	E_2 &=  - \int_\Omega \bigg[ \Big(\nabla m_n(t) \times u(t)\big)\cdot  \nabla \Delta m_n(t)+\big(m_n(t) \times \nabla u(t)\big)\cdot  \nabla \Delta m_n(t)\bigg]\ dx&\\
	&\leq \  \|\nabla m_n(t)\|_{L^4(\Omega)} \ \|u(t)\|_{L^4(\Omega)} \ \|\nabla \Delta m_n(t)\|_{L^2(\Omega)} + \  \|m_n(t)\|_{L^\infty(\Omega)} \ \|\nabla u(t)\|_{L^2(\Omega)} \ \|\nabla \Delta m_n(t)\|_{L^2(\Omega)}\\
	&\leq \epsilon   \ \|\nabla \Delta m_n(t)\|^2_{L^2(\Omega)} + C(\epsilon) \ \|u(t)\|^2_{H^1(\Omega)} \left(\|m_n(t)\|^2_{L^2(\Omega)}+\|\Delta m_n(t)\|^2_{L^2(\Omega)} \right),
\end{flalign*}
and
\begin{flalign*}
	E_3 &= -\int_\Omega \nabla u(t)\cdot \nabla \Delta m_n(t) \ dx \leq \|\nabla u(t)\|_{L^2(\Omega)}\ \|\nabla \Delta m_n(t)\|_{L^2(\Omega)}\leq \epsilon \ \|\nabla \Delta m_n(t)\|^2_{L^2(\Omega)} + C(\epsilon) \ \|u(t)\|^2_{H^1(\Omega)}.&
\end{flalign*}
Now, for the final term $E_4$, applying H\"older's inequality, embedding $H^1(\Omega)\hookrightarrow L^6(\Omega)$ and implementing estimate \eqref{ES3} yield 
\begin{flalign*}
	E_4&= 2\int_{\Omega}\big( m_n(t)\cdot \nabla m_n(t)\big)\ m_n(t)\cdot \nabla \Delta m_n(t)\ dx +\int_{\Omega} \left(1+|m_n(t)|^2\right)\nabla m_n(t)\cdot \nabla \Delta m_n(t)\ dx&\\
	&\leq 3\ \|m_n(t)\|^2_{L^6(\Omega)}\ \|\nabla m_n(t)\|_{L^6(\Omega)} \|\nabla \Delta m_n(t)\|_{L^2(\Omega)} + \|\nabla m_n(t)\|_{L^2(\Omega)}\ \|\nabla \Delta m_n(t)\|_{L^2(\Omega)}\\
	&\leq \epsilon \ \|\nabla \Delta m_n(t)\|^2_{L^2(\Omega)} + C(\epsilon)\ \left(1+ \|m_n(t)\|^4_{H^1(\Omega)} \right)\ \|\Delta m_n(t)\|^2_{L^2(\Omega)}. 
\end{flalign*}
Substituting the estimates for $E_i$ from $i=1$ to $4$ in \eqref{REQ-1}, choosing $\epsilon = 1/8$ and combining with estimate \eqref{SS-E1}, we have
\begin{flalign*}
	&\frac{d}{dt} \left(\|m_n(t)\|^2_{L^2(\Omega)} + \|\Delta m_n(t)\|^2_{L^2(\Omega)} \right) +  \|\nabla \Delta m_n(t)\|^2_{L^2(\Omega)} \leq C\ \|u(t)\|^2_{H^1(\Omega)} \\
	&+  C\ \left(1+\|m_n(t)\|^4_{H^1(\Omega)}+\|\nabla m_n(t)\|^2_{L^2(\Omega)}\ \|\Delta m_n(t)\|^2_{L^2(\Omega)}+\|u(t)\|^2_{H^1(\Omega)} \right)  \left(\|m_n(t)\|^2_{L^2(\Omega)}+\|\Delta m_n(t)\|^2_{L^2(\Omega)} \right).
\end{flalign*}
Now, applying Gr\"onwall's inequality and implementing the estimate $\|m_n(0)\|_{L^2(\Omega)}\leq \|m_0\|_{L^2(\Omega)}$ and $\|\Delta m_n(0)\|_{L^2(\Omega)}\leq \|\Delta m_0\|_{L^2(\Omega)}$, we derive 
\begin{flalign}\label{SS-E8}
	\|m_n(t)\|^2_{L^2(\Omega)} &+ \|\Delta m_n(t)\|^2_{L^2(\Omega)} +\int_0^t \|\nabla \Delta m_n(\tau)\|^2_{L^2(\Omega)}\ d\tau \leq \left(\|m_0\|^2_{L^2(\Omega)}+\|\Delta m_0\|^2_{L^2(\Omega)}+\|u\|^2_{L^2(0,T;H^1(\Omega))}\right)\nonumber\\
	&\hspace{1cm}  \times  \exp\left\{C\left[1+\|m_n\|^2_{L^\infty(0,T;H^1(\Omega))} \|m_n\|^2_{L^2(0,T;H^2(\Omega))}+\|u\|^2_{L^2(0,T;H^1(\Omega))}\right]\right\}.
\end{flalign}
Now, applying the uniform bounds for $m_n$ in $L^\infty(0,T;H^1(\Omega))\cap L^2(0,T;H^2(\Omega))$ from estimate \eqref{SS-E5} in \eqref{SS-E8}, we find
\begin{equation}\label{SS-E9}
	\|m_n(t)\|^2_{L^2(\Omega)} + \|\Delta m_n(t)\|^2_{L^2(\Omega)} +\int_0^t \|\nabla \Delta m_n(\tau)\|^2_{L^2(\Omega)}\ d\tau \leq M(m_0,u,\Omega,T).	
\end{equation}
Therefore, as a consequence of estimates \eqref{ES4}, \eqref{ES10} and Lemma \eqref{CPP}, $\{m_n\}$ is uniformly bounded in $L^\infty(0,T;H^2(\Omega))\cap L^2(0,T;H^3(\Omega))$. Next, considering the $L^2(0,T;H^1(\Omega))$ norm of $(m_n)_t$ in equation \eqref{GA} and using the estimates from Lemma \ref{PROP2}, we find 
\begin{flalign*}
	\|(m_n)_t\|^2_{L^2(0,T;H^1(\Omega))} &\leq C\ \left(1+\|m_n\|^2_{L^\infty(0,T;H^2(\Omega))}\right)\ \left(\|m_n\|^2_{L^2(0,T;H^3(\Omega))}+ \|u\|^2_{L^2(0,T;H^1(\Omega))}\right)\\
	&\hspace{1.5cm} + C\ \|m_n\|^4_{L^\infty(0,T;H^1(\Omega))}\  \|m_n\|^2_{L^2(0,T;H^2(\Omega))}.
\end{flalign*}
Applying the uniform bounds for $\{m_n\}$ obtained from estimates \eqref{SS-E5} and \eqref{SS-E9} in the above estimate, we derive
\begin{flalign}\label{SS-E10}
	\|(m_n)_t\|&^2_{L^2(0,T;H^1(\Omega))} \leq \left(1+\|m_0\|^2_{L^2(\Omega)}+\|\Delta m_0\|^2_{L^2(\Omega)}+\|u\|^2_{L^2(0,T;H^1(\Omega))}\right)^2\nonumber\\
	&\hspace{0.5cm}  \times  \exp\left\{C \left[1+\|m_0\|^2_{H^1(\Omega)} + \|u\|^2_{L^2(0,T;H^1(\Omega))} \right]^3 \ \exp\left\{C\ \|u\|^2_{L^2(0,T;H^1(\Omega))}\right\}\right\}.
\end{flalign}
Thus, using the Aloglu weak$^*$ compactness and reflexive weak compactness theorems (Theorem 4.18, \cite{JCR}), we get	
\begin{eqnarray} \left\{\begin{array}{cccll}
		m_n &\overset{w}{\rightharpoonup} & m \  &\mbox{weakly in}& \ L^2(0,T;H^3(\Omega)),\\
		m_n &\overset{\ast}{\rightharpoonup} & m \ &\mbox{weak$^*$ in}& \ L^{\infty}(0,T;H^2(\Omega))\ \ \text{and}\\
		(m_n)_t &\overset{w}{\rightharpoonup} & m_t \ &\mbox{weakly in}& \ L^2(0,T;H^1(\Omega)) \ \ \ \mbox{as} \  \ n\to \infty. \label{wc1}
	\end{array}\right.	
\end{eqnarray}

By using the Aubin-Lions-Simon lemma (see, Corollary 4, \cite{JS}), we can obtain a sub-sequence of $\{m_n\}$ (again denoted as $\{m_n\}$) such that $m_n \to m$ strongly in $L^2(0,T;H^2(\Omega))$ and $C([0,T];H^1(\Omega))$. Finally, passing to the limit in \eqref{GA} using convergences from \eqref{wc1} (similar to \cite{SPSK}), we find that $m$ satisfies \eqref{NLP} almost everywhere. Also, taking the weak sequential lower semi-continuity in \eqref{SS-E9}, we get the required estimate \eqref{SSEE2}. Uniqueness can be demonstrated using a classical approach, similar to the one presented in \cite{SPSK}.


\noindent\textbf{Part II: (Local existence for n=3)}\ 
For $n=3$, estimates $E_2$, $E_3$ and $E_4$ will be same. The only change will be at estimating $E_1$. In fact this is the main reason why we will get only a local solution for $n=3$. 

Applying H\"older's inequality followed by the Gagliardo-Nirenberg interpolation inequality \eqref{ES9} and estimate \eqref{ES3}, we derive 
\begin{flalign*}
	&E_1= - \int_\Omega \left(\nabla m_n(t) \times \Delta m_n(t)\right) \cdot \nabla \Delta m_n(t)\ dx \leq \|\nabla m_n(t)\|_{L^6(\Omega)} \ \|\Delta m_n(t)\|_{L^3(\Omega)} \ \|\nabla \Delta m_n(t)\|_{L^2(\Omega)}&\\
	&\hspace{2cm} \leq C\  \|\Delta m_n(t)\|_{L^2(\Omega)}^{\frac{3}{2}} \ \|\nabla \Delta m_n(t)\|_{L^2(\Omega)}^{\frac{3}{2}}	\leq \epsilon \ \|\nabla \Delta m_n(t)\|_{L^2(\Omega)}^2 + C\ \|\Delta m_n(t)\|^6_{L^2(\Omega)}. 
\end{flalign*}
Substituting this estimate in \eqref{REQ-1} and combining with estimate \eqref{SS-E1}, we find
\begin{flalign*}
	&\frac{d}{dt} \left(\|m_n(t)\|^2_{L^2(\Omega)} + \|\Delta m_n(t)\|^2_{L^2(\Omega)} \right) +  \|\nabla \Delta m_n(t)\|^2_{L^2(\Omega)}  \\
	&\leq   C\ \left(1 +\|u(t)\|^2_{H^1(\Omega)} \right)  \left(1+\|m_n(t)\|^2_{L^2(\Omega)}+\|\Delta m_n(t)\|^2_{L^2(\Omega)} \right)^3.
\end{flalign*}
Therefore, by applying the classical comparison lemma and utilizing the estimate $1+\|m_n(0)\|^2_{L^2(\Omega)}+\|\Delta m_n(0)\|^2_{L^2(\Omega)} \leq 1+ \|m_0\|^2_{L^2(\Omega)}+\|\Delta m_0\|^2_{L^2(\Omega)}=:M_0$, we obtain
\begin{equation*}
	1+ \|m_n(t)\|^2_{L^2(\Omega)} + \|\Delta m_n(t)\|^2_{L^2(\Omega)} + \int_0^t \|\nabla \Delta m_n(\tau)\|^2_{L^2(\Omega)}\ d\tau < \frac{M_0}{\left(1-2M_0^2\ C(\Omega) \int_0^t \left(1 + \|u(\tau)\|^2_{H^1(\Omega)} \right) \ d\tau \right)^\frac{1}{2}},
\end{equation*} 
provided \ \ $\int_0^t \left(1 + \|u(\tau)\|^2_{H^1(\Omega)} \right) \ d\tau < \frac{1}{2M_0^2 C(\Omega)}$. If this hypothesis holds for every $t\in [0,T]$ then we can directly get a global solution. If not, then for any such control $u\in L^2(0,T;H^1(\Omega))$, there exists a time $T^*$ such that for any time $\widetilde{T}<T^*$, $\int_0^{\widetilde{T}} \left(1 + \|u(\tau)\|^2_{H^1(\Omega)}\right)  \ d\tau <  \frac{1}{2M_0^2 C(\Omega)}$. Therefore, for any fixed $\widetilde{T}<T^*$, the sequence of approximated solutions $\{m_n\}$ is uniformly bounded in $L^\infty(0,\widetilde{T};H^2(\Omega))\cap L^2(0,\widetilde{T};H^3(\Omega))$. Moreover, by considering the $L^2(0,T;H^1(\Omega))$ norm of $(m_n)_t$ in \eqref{GA} and using these bounds, we can obtain a uniform bound for $(m_n)_t$. Proceeding as we have done for $n=2$, we can find a subsequence that converges to a local regular solution of \eqref{NLP} on $[0,\widetilde{T}]$.  

\noindent \textbf{Part\ III:\ (Global existence for n=3)\:}\ 
Finally, we will demonstrate the global existence of a regular solution under the smallness assumption \eqref{SAOIC}. We will employ the same estimate derived for $E_1$ in the three-dimensional case, along with the estimates $E_2,E_3$ and $E_4$ as previously derived for $n=2$. 
Substituting these estimate in \eqref{REQ-1} and choosing $\epsilon=\frac{1}{8}$, we have
\begin{flalign*}
	\frac{d}{dt} \|\Delta m_n(t)\|&^2_{L^2(\Omega)} + \|\nabla \Delta m_n(t)\|^2_{L^2(\Omega)}  \leq   C\ \|\Delta m_n(t)\|^6_{L^2(\Omega)} + C\ \|u(t)\|^2_{H^1(\Omega)}\nonumber\\
	& + C\    \left(1+\|m_n(t)\|^4_{H^1(\Omega)}+\|u(t)\|^2_{H^1(\Omega)}  \right)\ \left( \|m_n(t) \|^2_{L^2(\Omega)}+\|\Delta m_n(t)\|^2_{L^2(\Omega)}\right).
\end{flalign*}
Now, combining with estimate \eqref{SS-E1} and using the inequality \eqref{ES7}, we obtain
\begin{flalign}\label{N3E2}
	&\frac{d}{dt} \left(\|m_n(t)\|^2_{L^2(\Omega)} + \|\Delta m_n(t)\|^2_{L^2(\Omega)} \right) +\frac{1}{2}\ \|\nabla \Delta m_n(t)\|^2_{L^2(\Omega)}+ \frac{1}{2}\   \left(1-\widetilde{C}\ \|\Delta m_n(t)\|^4_{L^2(\Omega)}\right) \|\nabla \Delta m_n(t)\|^2_{L^2(\Omega)} \nonumber\\
	&\hspace{1cm} \leq   C\ \|u(t)\|^2_{H^1(\Omega)} +C\    \left(1+\|m_n(t)\|^4_{H^1(\Omega)}+\|u(t)\|^2_{H^1(\Omega)}  \right)\ \left(\|m_n(t)\|^2_{L^2(\Omega)} + \|\Delta m_n(t)\|^2_{L^2(\Omega)} \right).
\end{flalign}
We claim that for all $t\in[0,T]$, the following inequality holds:
$$ \|m_n(t)\|^2_{L^2(\Omega)} + \|\Delta m_n(t)\|^2_{L^2(\Omega)} +\int_0^t \|\nabla \Delta m_n(t)\|^2_{L^2(\Omega)} \ dt \leq  M(m_0,u,\Omega,T).$$
From our assumption \eqref{SAOIC}, it is clear that $\|\Delta m_n(0)\|^2_{L^2(\Omega)}\leq  \|\Delta m_0\|^2_{L^2(\Omega)} <\frac{1}{\widetilde{C}^{\frac{1}{2}}}$. Since, $\|\Delta m_n(t)\|_{L^2(\Omega)}$ is continuous, there exists a time $t^*>0$ such that  
\begin{equation}\label{GEE-1}
\|\Delta m_n(t)\|^2_{L^2(\Omega)}< \frac{1}{\widetilde{C}^{\frac{1}{2}}}\ \ \ \\ \text{for all}\ t\in [0,t^*].	
\end{equation}
Let $T^*$ be the least upper bound of the set of all times $t^*\in [0,T]$ such that the  inequality \eqref{GEE-1} holds true. Therefore, by the definition of $T^*$, we have
$$1-\widetilde{C}\ \|\Delta m_n(t)\|^4_{L^2(\Omega)} \geq0 \ \ \ \text{for all}\ t\in [0,T^*].$$
Substituting this inequality into \eqref{N3E2}, and applying Gr\"onwall's inequality followed by the estimate \eqref{SS-E5}, we derive
\begin{equation}\label{GEE-2}
	\|m_n(t)\|^2_{L^2(\Omega)}+\|\Delta m_n(t)\|^2_{L^2(\Omega)} +\int_0^t\|\nabla\Delta m_n(t)\|^2_{L^2(\Omega)} \ dt\leq M(m_0,u,\Omega,T) <\frac{1}{\widetilde{C}^{\frac{1}{2}}}\ \ \ \forall\  t\in[0,T^*].
\end{equation}
If $T^*=T$, then we are done. If not, and $T^* <T$, then due to the continuity argument for $\Delta m_n$ and the previously derived estimate \eqref{GEE-2}, there exists a time $T^{\#}>T^*$ such that 
\begin{equation*}
	\|\Delta m_n(t)\|^2_{L^2(\Omega)}  <\frac{1}{\widetilde{C}^{\frac{1}{2}}}\ \ \ \forall\  t\in[0,T^\#].
\end{equation*}
This contradicts the definition of $T^*$. Therefore, we conclude that $T^*=T$.


Finally, using the uniform bounds, the weak and weak$^*$ convergences as in \eqref{wc}, and proceeding similarly to the previous case for $n=2$, we find that $m$ is a global regular solution on $[0,T]$. 
Finally, the uniqueness of regular solutions can be demonstrated in a classical manner by following the steps outlined in \cite{SPSK}.
\end{proof}

\section{Existence of Optimum and First Order Optimality Condition}\label{S-FOOC}

\subsection{Existence of Optimal Control}

The optimal control problem is meaningful only when a globally optimal solution exists. The necessity of demonstrating the existence of an optimal control becomes particularly intricate when dealing with nonlinear systems featuring non-convex cost functionals. In such scenarios, the complexities introduced by nonlinearity and non-convexity can pose formidable challenges. However, establishing the existence of an optimal control remains paramount even in these cases. This fact is established by the following theorem.

\begin{proof}[Proof of Theorem \ref{T-EOOC}]	
	We prove this theorem using the direct method of calculus of variations. Clearly, the bounds $a,b\in L^2(0,T;\mathbb{R}^N)$ for $n=2$, validates the non-emptiness of $\mathcal{A}$. Moreover, for $n=3$ we have made the assumption that $\mathbb{U}_{ad}$ is non-empty, which shows that admissible pairs $\mathcal{A}$ is non-empty. Since, the cost functional \eqref{CF-2} is bounded below, there exists real number $\alpha\geq 0$ such that $\alpha: = \inf_{(m,U) \in \mathcal{A}} \mathcal{J}(m,U)$. Furthermore, we can extract a minimizing sequence $\{(m_n,U_n)\}\subset \mathcal{A}$ such that the convergence $\mathcal{J}(m_n,U_n)\to \alpha$ as $n\to \infty$ holds true.

	
	Since, $\{(m_n,U_n)\}$ constitutes a minimizing sequence, according to the definition of the cost functional, $\{U_n\}$ is uniformly bounded in $L^2(0,T;\mathbb{R}^N)$. Therefore, we can extract a sub-sequence again denoted as $\{U_n\}$ such that $U_n \overset{w}{\rightharpoonup}\widetilde{U}$ weakly in $L^2(0,T;\mathbb{R}^N)$ for some element $\widetilde{U}\in L^2(0,T;\mathbb{R}^N)$. Moreover, as the set $\mathbb{U}_{a,b}$ is a closed and convex set, therefore it is weakly closed. So, $\widetilde{U}\in \mathbb{U}_{a,b}$ for both $n=2$ and $3$. Furthermore, as norm is weakly sequential lower semi-continuous, that is, $\|U\|_{L^2(0,T;\mathbb{R}^N)}\leq \liminf_{n \to \infty} \|U_n\|_{L^2(0,T;\mathbb{R}^N)}$, $\widetilde{U}\in \mathbb{U}_{sa}$ for $n=3$. Therefore, $\widetilde{U}\in \mathbb{U}_{ad}$ for $n=2,3$.

	
	
	Also, using estimate \eqref{PEST}, $\zetaup(U_n)$ is uniformly bounded in $L^2(0,T;H^1(\Omega))$. Utilizing the estimate \eqref{SSEE2} and \eqref{SS-E10}, we can establish a uniform bound for the sequence $\{m_n\}$ in space $\mathcal{M}$.
	Then, applying the Aubin–Lions–Simon compactness theorem, we conclude that the sequence $\{m_n\}$ is relatively compact in the space $C([0,T];H^1(\Omega)) \cap L^2(0,T;H^2(\Omega))$. As a result, there exist subsequences (again denoted as $\left\{ (m_n,U_n)\right\} \subset \mathcal{A}$) such that
	\begin{eqnarray} \left\{\begin{array}{ccccl}
			U_n &\overset{w}{\rightharpoonup} & \widetilde{U} \ &\mbox{weakly in}&  L^2(0,T;\mathbb{R}^N),\\
			\zetaup(U_n) &\overset{w}{\rightharpoonup} & \zetaup(\widetilde{U}) \ &\mbox{weakly in}&  L^2(0,T;H^1(\Omega)),\\
			m_n &\overset{w}{\rightharpoonup} & \widetilde{m} \  &\mbox{weakly in}&  L^2(0,T;H^3(\Omega)),\\
			m_n &\overset{*}{\rightharpoonup} & \widetilde{m} \  &\mbox{weak$^*$ in}&  L^\infty(0,T;H^2(\Omega)),\\
			(m_n)_t &\overset{w}{\rightharpoonup} & \widetilde{m}_t \ &\mbox{weakly in}&  L^2(0,T;H^1(\Omega))\ \text{and}\\
			m_n &\overset{s}{\to} & \widetilde{m} \  &\mbox{strongly in}& C([0,T];H^1(\Omega))\cap L^2(0,T;H^2(\Omega))\ \ \ \ \mbox{as} \  \ n\to \infty. \label{P2}
		\end{array}\right.	
	\end{eqnarray}
	Employing the convergences established in \eqref{P2}, we can confirm that $\widetilde{m}$ is a regular solution of system \eqref{NLP-EV} corresponding to the control $\widetilde{U}$, that is $(\widetilde{m},\widetilde{U})\in \mathcal{A}$. Leveraging the weak lower semicontinuity of the cost functional $\mathcal{J}$, we can deduce that
	\begin{equation}\label{I2}
		\mathcal J(\widetilde{m},\widetilde{U}) \leq \liminf_{n \to \infty}  \mathcal J(m_n,U_n)=\lim_{n \to \infty} \mathcal J(m_n,U_n)=\alpha.
	\end{equation}
	As $\alpha$ represents the infimum of the functional $\mathcal{J}$ over $\mathcal{A}$, it follows that $\alpha \leq \mathcal{J}(\widetilde{m},\widetilde{U})$. Thus, in conjunction with \eqref{I2}, we can deduce that $\displaystyle{\mathcal J(\widetilde{m},\widetilde{U}) = \alpha = \inf_{(m,U) \in \mathcal{A}} \mathcal J(m,U)}$.
	Thus the proof is completed.
\end{proof}

\subsection{Control-to-State Operator}

Let $\widetilde{m}$ be the unique regular solution of system \eqref{NLP-EV}, associated with the time dependent control $\widetilde{U}\in \mathbb{U}_R$, and initial data $m_0$ meets the condition \eqref{IC}. Now, let's analyze the following linearized system: 
\begin{equation}\label{CLE}
	(L-LLG)\begin{cases}
		\begin{array}{l}
			\mathcal{L}_{\widetilde{U}}z=f \ \ \ \ \text{in}\ \Omega_T,\\
			\frac{\partial z}{\partial \eta}=0 \ \ \ \ \ \  \text{in}\ \partial \Omega_T, \ \ \ z(x,0)=z_0\ \ \text{in}\ \Omega,
		\end{array}
	\end{cases}	
\end{equation}
where the operator $\mathcal{L}_{\widetilde{U}}$ is defined as
\begin{equation}\label{CLO}
	\mathcal{L}_{\widetilde{U}}z:= z_t-\Delta z -z\times \Delta \widetilde{m} -\widetilde{m}\times \Delta z - z \times \zetaup(\widetilde{U}) +2\ (\widetilde{m}\cdot z)\ \widetilde{m}+ \left(1+|\widetilde{m}|^2\right)z.	
\end{equation}

\begin{Lem}\label{L-SLS}
	Given any $f$ in $L^2(0,T;H^1(\Omega))$, we can find a unique regular solution $z$ in $L^2(0,T;H^3(\Omega))\cap L^{\infty}(0,T;H^2(\Omega))$ of the linearized system \eqref{CLE}. Furthermore, the subsequent estimation holds:
	\begin{align}\label{LSSE}
		\mathcal{B}(z_{\infty},z_2,{z_t}_2) \leq& \left(\|z_0\|^2_{L^2(\Omega)}+\|\Delta z_0\|^2_{L^2(\Omega)} +  \|f\|^2_{L^2(0,T;H^1(\Omega))}\right) \times \ \exp\bigg\{C\   \Big(1+\|\widetilde{m}\|^2_{L^\infty(0,T;H^2(\Omega))}\nonumber\\
		&+\|\widetilde{m}\|^2_{L^\infty(0,T;H^1(\Omega))}\|\widetilde{m}\|^2_{L^2(0,T;H^2(\Omega))}+\|\widetilde{m}\|^2_{L^2(0,T;H^3(\Omega))}+ \|\zetaup(\widetilde{U})\|^2_{L^2(0,T;H^1(\Omega))}\Big) \bigg\},
\end{align}
where $\mathcal{B}(z_{\infty},z_2,{z_t}_2):=\|z\|^2_{L^{\infty}(0,T;H^2(\Omega))}+ \|z\|^2_{L^2(0,T;H^3(\Omega))}+\|z_t\|^2_{L^2(0,T;H^1(\Omega))}$.
\end{Lem}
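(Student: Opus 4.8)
The plan is to build the solution by Galerkin approximation in the same orthonormal basis $\{\xi_j\}$ of eigenfunctions of $-\Delta+I$ with Neumann boundary conditions used in the proof of Theorem~\ref{T-SS} (recall $\Delta\xi_j=(1-\rho_j)\xi_j$, so $\partial_\eta\Delta\xi_j=0$ as well, which legitimizes testing with $-\Delta z_n$ and $\Delta^2 z_n$), to derive a priori estimates that parallel those of Theorem~\ref{T-RS}, and to pass to the limit. The decisive structural point is that $\mathcal{L}_{\widetilde U}$ in \eqref{CLO} is \emph{linear} in $z$: the energy identities will therefore produce a differential inequality that is genuinely linear in the unknown norms, with time‑dependent coefficients built \emph{only} from the given data $\widetilde m$ and $\zetaup(\widetilde U)$. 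Hence Gr\"onwall's inequality closes the estimate on the whole of $[0,T]$ with no smallness assumption (contrast Theorem~\ref{T-RS}, where the analogous coefficient was $\|\Delta m_n\|_{L^2(\Omega)}^4$, part of the solution itself, which forced the condition \eqref{SAOIC}).

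First, the finite‑dimensional Galerkin problem is a linear ODE system whose coefficients are integrable in time (through $\widetilde m,\Delta\widetilde m,\zetaup(\widetilde U),f$), so Carath\'eodory's theorem gives a unique global solution $z_n$ on $[0,T]$. Next come the a priori estimates: test the Galerkin equation successively with $z_n$, $-\Delta z_n$ and $\Delta^2 z_n$. The identities of Lemma~\ref{CPP} eliminate the most dangerous terms, namely $(z_n\times\Delta\widetilde m)\cdot z_n=0$, $(z_n\times\zetaup(\widetilde U))\cdot z_n=0$, $(\widetilde m\times\Delta z_n)\cdot\Delta z_n=0$ and $(\widetilde m\times\nabla\Delta z_n)\cdot\nabla\Delta z_n=0$, while $-2(\widetilde m\cdot z_n)\widetilde m\cdot z_n$ and $-(1+|\widetilde m|^2)|z_n|^2$ have a favourable sign. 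After integration by parts one bounds the remaining cross and cubic contributions by H\"older's inequality together with the interpolation inequalities \eqref{ES2}, \eqref{ES9} of Proposition~\ref{PROP1} and the Gagliardo–Nirenberg inequalities \eqref{ES5}, \eqref{ES20}; typical instances are
\[
\Big|\!\int_\Omega(\nabla\widetilde m\times\Delta z_n)\cdot\nabla\Delta z_n\,dx\Big|\le\varepsilon\|\nabla\Delta z_n\|_{L^2(\Omega)}^2+C\|\nabla\widetilde m\|_{L^6(\Omega)}^4\|\Delta z_n\|_{L^2(\Omega)}^2,
\]
\[
\Big|\!\int_\Omega(z_n\times\nabla\Delta\widetilde m)\cdot\nabla\Delta z_n\,dx\Big|\le\varepsilon\|\nabla\Delta z_n\|_{L^2(\Omega)}^2+C\|\widetilde m\|_{H^3(\Omega)}^2\big(\|z_n\|_{L^2(\Omega)}^2+\|\Delta z_n\|_{L^2(\Omega)}^2\big),
\]
and similarly for $\nabla z_n\times\Delta\widetilde m$ (coefficient $\|\widetilde m\|_{H^2(\Omega)}\|\widetilde m\|_{H^3(\Omega)}$), for the $\zetaup(\widetilde U)$‑terms (coefficient $\|\zetaup(\widetilde U)\|_{H^1(\Omega)}^2$) and for the cubic term coming from $2(\widetilde m\cdot z_n)\widetilde m$ (coefficient $1+\|\widetilde m\|_{H^1(\Omega)}^4$, and $\int_0^T\|\widetilde m\|_{H^1(\Omega)}^4\le\|\widetilde m\|_{L^\infty(0,T;H^1(\Omega))}^2\|\widetilde m\|_{L^2(0,T;H^2(\Omega))}^2$). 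All these coefficients are $L^1(0,T)$ functions of the \emph{data}. Adding the three tested identities and absorbing the $\varepsilon$‑terms into $\|\nabla\Delta z_n\|_{L^2(\Omega)}^2$ gives, with $g\in L^1(0,T)$ the explicit combination of norms of $\widetilde m$ and $\zetaup(\widetilde U)$ occurring in the exponential in \eqref{LSSE},
\[
\frac{d}{dt}\Big(\|z_n\|_{L^2(\Omega)}^2+\|\nabla z_n\|_{L^2(\Omega)}^2+\|\Delta z_n\|_{L^2(\Omega)}^2\Big)+\|\nabla\Delta z_n\|_{L^2(\Omega)}^2\le g(t)\Big(\|z_n\|_{L^2(\Omega)}^2+\|\nabla z_n\|_{L^2(\Omega)}^2+\|\Delta z_n\|_{L^2(\Omega)}^2\Big)+C\|f\|_{H^1(\Omega)}^2.
\]
Gr\"onwall's inequality, the bound $\|z_n(0)\|_{H^1(\Omega)}\le\|z_0\|_{H^1(\Omega)}$, $\|\Delta z_n(0)\|_{L^2(\Omega)}\le\|\Delta z_0\|_{L^2(\Omega)}$, and the equivalent‑norm Lemma~\ref{EN} then yield \eqref{LSSE} for $z_n$, uniformly in $n$. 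Feeding these bounds into the equation and using Lemma~\ref{PROP2} to estimate the $L^2(0,T;H^1(\Omega))$ norms of the cross and cubic terms produces a uniform bound for $(z_n)_t$ in $L^2(0,T;H^1(\Omega))$.

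Finally, the uniform bounds give (up to a subsequence) $z_n\rightharpoonup z$ weakly in $L^2(0,T;H^3(\Omega))$, weak‑$*$ in $L^\infty(0,T;H^2(\Omega))$, $(z_n)_t\rightharpoonup z_t$ weakly in $L^2(0,T;H^1(\Omega))$, and by the Aubin–Lions–Simon lemma $z_n\to z$ strongly in $L^2(0,T;H^2(\Omega))\cap C([0,T];H^1(\Omega))$; since every term of $\mathcal{L}_{\widetilde U}z_n=f$ is linear in $z_n$ with fixed coefficients $\widetilde m,\Delta\widetilde m,\zetaup(\widetilde U)$, the passage to the limit is routine, $z$ solves \eqref{CLE} a.e.\ with $z(0)=z_0$, and \eqref{LSSE} follows by weak lower semicontinuity. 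Uniqueness is immediate from linearity: the difference of two solutions solves \eqref{CLE} with $f\equiv0$, $z_0=0$, and \eqref{LSSE} forces it to vanish. The only genuinely delicate step is the bookkeeping in the $\Delta^2 z_n$‑test: after splitting $\nabla(z_n\times\Delta\widetilde m)$ and $\nabla(\widetilde m\times\Delta z_n)$ and discarding the term pairing $\widetilde m\times\nabla\Delta z_n$ with $\nabla\Delta z_n$, each of the four surviving products must be distributed so that its time coefficient is integrable on $[0,T]$ — this is precisely what makes $\|\widetilde m\|_{L^2(0,T;H^3(\Omega))}^2$ and the $L^\infty$‑in‑time norms of $\widetilde m$ appear in \eqref{LSSE}, and it is the step where, in the nonlinear setting of Theorem~\ref{T-RS}, a smallness hypothesis was unavoidable but here is not.
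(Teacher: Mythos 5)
Your proposal is correct and follows essentially the same route as the paper: Galerkin approximation, energy estimates at the $L^2$ and $H^2$ levels (your test with $\Delta^2 z_n$ is, after integration by parts under the Neumann conditions, the paper's device of taking the gradient of the equation and pairing with $-\nabla\Delta z$), Gr\"onwall closing globally because the coefficients depend only on the data $\widetilde m$ and $\zetaup(\widetilde U)$, the $z_t$ bound via Lemma \ref{PROP2}, and compactness plus linearity for the limit passage and uniqueness. The only cosmetic divergence is in some H\"older splittings (e.g.\ your $L^6$--$L^3$ split for $\nabla\widetilde m\times\Delta z_n$ yields a coefficient $\|\widetilde m\|_{H^2}^4$ rather than the paper's $\|\widetilde m\|_{H^1}^2\|\widetilde m\|_{H^2}^2$ obtained from an $L^4$--$L^4$ split with \eqref{ES5}), which changes the constant in the exponential but not the validity of the argument.
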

\begin{proof}
By employing the Faedo-Galerkin approximation technique, we are able to demonstrate the existence and uniqueness of the system \eqref{CLE}. However, in this context, we will focus solely on presenting a priori estimates for the solution. By considering the $L^2$ inner product of \eqref{CLE} with the function $z$, we have 
\begin{equation}\label{LSE1}
	\frac{d}{dt} \|z(t)\|^2_{L^2(\Omega)} +  \|\nabla z(t)\|^2_{L^2(\Omega)} \leq C\  \left(1+  \|\widetilde{m}(t)\|^2_{H^2(\Omega)} \right)\|z(t)\|^2_{H^1(\Omega)} + \|f(t)\|^2_{L^2(\Omega)}.	
\end{equation}
By calculating the gradient of the system \eqref{CLE} and subsequently considering the inner product with $-\nabla \Delta z$, we arrive at the following modification:
\begin{eqnarray}\label{LSH2E}
	\lefteqn{\frac{1}{2} \frac{d}{dt} \|\Delta z(t)\|^2_{L^2(\Omega)} +  \int_\Omega |\nabla \Delta z(t)|^2 dx = - \int_\Omega \nabla \big(z \times \Delta \widetilde{m}\big) \cdot \nabla \Delta z\ dx }\nonumber\\
	&&- \int_\Omega \nabla \big( \widetilde{m} \times \Delta z\big)  \cdot \nabla \Delta z\ dx  -  \int_\Omega \nabla \big(z \times \zetaup(\widetilde{U})\big)\cdot \nabla \Delta z\ dx +\ 2\  \int_\Omega \nabla \big( \big(\widetilde{m}\cdot z\big)\widetilde{m}\big) \cdot  \nabla \Delta z\ dx \nonumber\\
	&& + \int_\Omega \nabla \left( \left(1+|\widetilde{m}|^2\right)z\right)\cdot \nabla \Delta z\ dx - \int_\Omega \nabla f\cdot  \nabla \Delta z\ dx:= \sum_{i=1}^{6}\Gamma_i. \ \ \ \ \ \ \ 
\end{eqnarray}

Let's assess the terms on the right-hand side. By employing H\"older's inequality and utilizing the embeddings $H^1(\Omega) \hookrightarrow L^p(\Omega)$ for $p \in [1,6]$ and $H^2(\Omega) \hookrightarrow L^\infty(\Omega)$, we obtain
\begin{flalign*}
	\Gamma_1 &=- \int_\Omega \left[ \nabla z\times \Delta \widetilde{m} + z \times \nabla \Delta \widetilde{m} \right]\cdot \nabla \Delta z\ dx&\\
	&\leq \ \ \|\nabla z(t)\|_{L^4(\Omega)} \|\Delta \widetilde{m}(t)\|_{L^4(\Omega)}   \|\nabla \Delta z(t)\|_{L^2(\Omega)} + \|z(t)\|_{L^\infty(\Omega)} \|\nabla \Delta \widetilde{m}\|_{L^2(\Omega)} \|\nabla \Delta z\|_{L^2(\Omega)}\\
	&\leq \epsilon   \int_\Omega |\nabla \Delta z(t)|^2 dx +\ C(\epsilon)   \ \|\widetilde{m}(t)\|^2_{H^3(\Omega)} \ \|z(t)\|^2_{H^2(\Omega)},
\end{flalign*}
\begin{flalign*}
	\Gamma_5 &= \int_\Omega  \big[\nabla z+2\ \big(\widetilde{m}\cdot \nabla \widetilde{m}\big)z+ |\widetilde{m}|^2 \nabla z \big]\cdot \nabla \Delta z\ dx&\\
	&\leq \left[ \|\nabla z(t)\|_{L^2(\Omega)} + 2\  \|\widetilde{m}(t)\|_{L^6(\Omega)}\ \|\nabla \widetilde{m}(t)\|_{L^6(\Omega)}\ \|z(t)\|_{L^6(\Omega)}  + \|\widetilde{m}(t)\|^2_{L^6(\Omega)} \|\nabla z(t)\|_{L^6(\Omega)} \right] \ \|\nabla \Delta z(t)\|_{L^2(\Omega)}\\
	&\leq \epsilon   \int_\Omega |\nabla \Delta z(t)|^2 dx +\ C(\epsilon)   \ \left[1+ \|\widetilde{m}(t)\|^2_{H^1(\Omega)} \ \|\widetilde{m}(t)\|^2_{H^2(\Omega)} \right]  \ \|z(t)\|^2_{H^2(\Omega)}.
\end{flalign*}
We can obtain the estimate for $\Gamma_2, \Gamma_3, \Gamma_4$ and $\Gamma_6$ in a similar way. Now, substituting all these estimates in equation \eqref{LSH2E}, choosing $\epsilon=\frac{1}{12}$ and adding with \eqref{LSE1}, we get
\begin{eqnarray*}
	\lefteqn{\frac{d}{dt} \left(\|z(t)\|^2_{L^2(\Omega)}+\|\Delta z(t)\|^2_{L^2(\Omega)}\right)+  \|\nabla z(t)\|^2_{L^2(\Omega)}+\|\nabla \Delta z(t)\|^2_{L^2(\Omega)} } \nonumber\\
	&&\leq C\ \bigg[1+ \|\widetilde{m}(t)\|^2_{H^1(\Omega)}\|\widetilde{m}(t)\|^2_{H^2(\Omega)} + \|\widetilde{m}(t)\|^2_{H^3(\Omega)}  + \|\zetaup(\widetilde{U})(t)\|^2_{H^1(\Omega)}\bigg]\ \|z(t)\|^2_{H^2(\Omega)} + C\   \|f(t)\|^2_{H^1(\Omega)}.
\end{eqnarray*}
By utilizing the inequality $\|z\|_{H^2(\Omega)} \leq C \left(\|z\|_{L^2(\Omega)} + \|\Delta z\|_{L^2(\Omega)}\right)$ stated in Lemma \ref{EN}, and subsequently applying Gronwall's inequality, we find 
\begin{equation}\label{LSE3}
    \|z(t)\|^2_{L^2(\Omega)}+\|\Delta z(t)\|^2_{L^2(\Omega)} + \int_0^T \left(\|\nabla z(s)\|^2_{L^2(\Omega)}+\|\nabla \Delta z(s)\|^2_{L^2(\Omega)}\right)\ ds 
	\leq C_1(\Omega,T,z_0,f,\widetilde{m},\widetilde{u}),	
\end{equation}
\begin{flalign*}
\text{where}\ & C_1(\Omega,T,z_0,f,\widetilde{m},\widetilde{u}) = \left(\|z_0\|^2_{L^2(\Omega)}+\|\Delta z_0\|^2_{L^2(\Omega)} +  \|f\|^2_{L^2(0,T;H^1(\Omega))}\right)&\\
&\hspace{1.5cm}\times \ \exp\bigg\{C  \left(1+ \|\widetilde{m}\|^2_{L^\infty(0,T;H^1(\Omega))}\|\widetilde{m}\|^2_{L^2(0,T;H^2(\Omega))}+\|\widetilde{m}\|^2_{L^2(0,T;H^3(\Omega))}+ \|\zetaup(\widetilde{U})\|^2_{L^2(0,T;H^1(\Omega))}\right) \bigg\}.	
\end{flalign*}
Moreover, estimating the $L^2(0,T;H^1(\Omega))$ norm of $z_t$ in equation \eqref{CLE} using the inequalities from Lemma \ref{PROP2}, we derive
\begin{align}\label{LSE4}
	&\|z_t\|_{L^2(0,T;H^1(\Omega))} \leq  \left(1+\|\widetilde{m}\|^2_{L^\infty(0,T;H^2(\Omega))}\right)\|z\|^2_{L^2(0,T;H^3(\Omega))}\nonumber\\
	&\hspace{0.5cm} +\left(\|\widetilde{m }\|^2_{L^2(0,T;H^3(\Omega))}+\|\zetaup(\widetilde{U})\|^2_{L^2(0,T;H^1(\Omega))}+\|\widetilde{m}\|^2_{L^\infty(0,T;H^1(\Omega))} \|\widetilde{m}\|^2_{L^2(0,T;H^2(\Omega))}\right)\|z\|^2_{L^\infty(0,T;H^2(\Omega))}.
\end{align}
Finally, substituting the $\|z\|_{L^2(0,T;H^3(\Omega))}$ and $\|z\|_{L^\infty(0,T;H^2(\Omega))}$ norm estimates from \eqref{LSE3} in estimate \eqref{LSE4}, and adding the resultant again with \eqref{LSE3}, we obtain our required estimate \eqref{LSSE}.
Hence the proof.
\end{proof}


\begin{Lem}{(Lipschitz Continuity of G)}\label{L-LCCTS}	The control-to-state operator $G:\mathbb{U}_R \to \mathcal{M}$ exhibits Lipschitz continuity. In other words, there exists a positive constant $C_2$, which depends on $\Omega$, $T$, $R$, and $m_0$, such that the following holds:
	\begin{equation}\label{LCCTSO}
		\|G(U_1)-G(U_2)\|^2_{\mathcal{M}} \leq C_2\ \|U_1-U_2\|^2_{L^2(0,T;\mathbb{R}^N)}, \ \ \ \ \ \ \forall \ U_1,U_2 \in \mathbb{U}_R.
	\end{equation}	
\end{Lem}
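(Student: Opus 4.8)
The plan is to subtract the two state equations, recognise that the difference solves a linear parabolic system of exactly the same structure as the linearised problem \eqref{CLE}--\eqref{CLO}, and then re-run the a priori estimates behind Lemma \ref{L-SLS}. Fix $U_1,U_2\in\mathbb{U}_R$ and write $m_i:=G(U_i)$ for the associated regular solutions of \eqref{NLP-EV}; by Theorem \ref{T-RS} together with the definition of $\mathbb{U}_R$ (which, for $n=3$, keeps $M(m_0,\zetaup(U_i),\Omega,T)$ below $\widetilde C^{-1/2}$), both $m_1,m_2$ are global and satisfy a common bound $\|m_i\|_{\mathcal M}\le K=K(R,m_0,\Omega,T)$ coming from \eqref{SSEE2} and \eqref{SS-E10}. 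Put $w:=m_1-m_2$. Subtracting the two copies of \eqref{NLP-EV}, using bilinearity of the cross product, linearity of $\zetaup$, and the algebraic identity $|m_1|^2m_1-|m_2|^2m_2=|m_1|^2\,w+\big(w\cdot(m_1+m_2)\big)m_2$, one finds that $w$ solves, with homogeneous Neumann boundary condition and $w(0)=0$,
\begin{align*}
 w_t-\Delta w&-w\times\Delta m_1-m_2\times\Delta w-w\times\zetaup(U_1)+w+|m_1|^2w+\big(w\cdot(m_1+m_2)\big)m_2\\
 &= m_2\times\zetaup(U_1-U_2)+\zetaup(U_1-U_2)=:f.
\end{align*}

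Every term on the left is linear in $w$ with coefficients built from $m_1,m_2,\zetaup(U_1)$, and $f$ plays the role of the source while the initial datum vanishes; thus the system for $w$ is of the form $\mathcal L z=f$ in the spirit of \eqref{CLO}. I would therefore reproduce the two-level energy estimate of Lemma \ref{L-SLS}: testing with $w$ controls $\|w\|_{L^\infty(0,T;L^2(\Omega))}$ and $\|w\|_{L^2(0,T;H^1(\Omega))}$ (the cross term $(w\times\Delta m_1)\cdot w$ vanishes, and the remaining terms are handled as in \eqref{LSE1}), while taking the gradient of the equation and pairing with $-\nabla\Delta w$ controls $\|\Delta w\|_{L^\infty(0,T;L^2(\Omega))}$ and $\|\nabla\Delta w\|_{L^2(0,T;L^2(\Omega))}$, hence by Lemma \ref{EN} also $\|w\|_{L^\infty(0,T;H^2(\Omega))}$ and $\|w\|_{L^2(0,T;H^3(\Omega))}$. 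The nonlinear contractions are estimated exactly as $\Gamma_1,\dots,\Gamma_6$ in Lemma \ref{L-SLS} and the $E_1$-term in the proof of Theorem \ref{T-RS}, using Lemma \ref{CPP}, Proposition \ref{PROP1} and the dimension-specific Gagliardo--Nirenberg inequalities \eqref{ES5}, \eqref{ES20}, \eqref{ES9}; the only change is the innocuous presence of two coefficient states $m_1,m_2$, each bounded by $K$ in $\mathcal M$. Since $m_i\in L^\infty(0,T;H^2(\Omega))\cap L^2(0,T;H^3(\Omega))$, every Grönwall coefficient that shows up (products such as $\|\Delta m_i\|_{L^2(\Omega)}\|\nabla\Delta m_i\|_{L^2(\Omega)}$ or $\|\Delta m_i\|^4_{L^2(\Omega)}$) lies in $L^1(0,T)$, so Grönwall's inequality produces the \emph{global} bound
\[
 \|w\|^2_{L^\infty(0,T;H^2(\Omega))}+\|w\|^2_{L^2(0,T;H^3(\Omega))}\le C(K,\Omega,T)\,\|f\|^2_{L^2(0,T;H^1(\Omega))}.
\]

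It then remains to bound $f$ and $w_t$. By the estimate \eqref{EE-3}, the bound \eqref{PEST}, and $\|m_2\|_{L^\infty(0,T;H^2(\Omega))}\le K$,
\[
 \|f\|^2_{L^2(0,T;H^1(\Omega))}\le C(1+K^2)\,\|\zetaup(U_1-U_2)\|^2_{L^2(0,T;H^1(\Omega))}\le C(1+K^2)\max_{1\le k\le N}\|B_k\|^2_{H^1(\Omega)}\,\|U_1-U_2\|^2_{L^2(0,T;\mathbb{R}^N)}.
\]
Inserting the $H^3$-bound just obtained into the equation for $w$ and estimating its $L^2(0,T;H^1(\Omega))$ norm by means of Lemma \ref{PROP2} controls $\|w_t\|_{L^2(0,T;H^1(\Omega))}$ by the same right-hand side. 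Adding the two contributions gives $\|G(U_1)-G(U_2)\|^2_{\mathcal M}=\|w\|^2_{L^2(0,T;H^3(\Omega))}+\|w_t\|^2_{L^2(0,T;H^1(\Omega))}\le C_2\|U_1-U_2\|^2_{L^2(0,T;\mathbb{R}^N)}$ with $C_2=C_2(\Omega,T,R,m_0)$, which is the assertion.

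The main obstacle is the top-order estimate, i.e. handling $\int_\Omega\nabla(w\times\Delta m_1)\cdot\nabla\Delta w\,dx$ and $\int_\Omega\nabla\big((w\cdot(m_1+m_2))m_2\big)\cdot\nabla\Delta w\,dx$ after distributing the gradient: this is dimension-sensitive and forces the interpolation inequalities \eqref{ES9}, \eqref{ES20}, and one must verify term by term that the resulting time-dependent prefactors are integrable on $[0,T]$ so that Grönwall yields a global rather than a local bound. The term $m_2\times\Delta w$ is benign because $(m_2\times\nabla\Delta w)\cdot\nabla\Delta w=0$, leaving only $(\nabla m_2\times\Delta w)\cdot\nabla\Delta w$, which is exactly of $E_1$-type; crucially $w$ enters all dangerous terms linearly once $m_1,m_2$ are frozen as coefficients, so no quadratic-in-$w$ obstruction appears and the final bound is genuinely linear in $\|U_1-U_2\|^2_{L^2(0,T;\mathbb{R}^N)}$. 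Everything else is the routine bookkeeping already carried out in Lemma \ref{L-SLS}.
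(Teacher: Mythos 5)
Your proposal is correct and follows essentially the same route as the paper: subtract the two state equations, write the difference $w=m_1-m_2$ as a solution of a linear system of the type $\mathcal{L}_{\widetilde U}z=f$ with vanishing initial datum (your algebraic splitting of $|m_1|^2m_1-|m_2|^2m_2$ differs from the paper's only in which factor carries $w$), and then rerun the two-level energy estimates of Lemma \ref{L-SLS} — testing with $w$ and with $\nabla\Delta w$ — followed by Gr\"onwall and the $L^2(0,T;H^1(\Omega))$ bound on $w_t$. No substantive differences.
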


\begin{proof}
	
	Assume $m_1$ and $m_2$ are two regular solutions of system \eqref{NLP-EV} corresponding to the controls $U_1$ and $U_2$, respectively. Define $\shat{m} := m_1 - m_2$ and $\shat{U} := U_1 - U_2$. The pair $(\shat{m}, \shat{U})$ satisfies the following system 
\begin{equation}\label{EPD}
	\begin{cases}
		\displaystyle \shat{m}_t- \Delta \shat{m}= \shat{m} \times \Delta m_1 + m_2 \times \Delta \shat{m}+ \shat{m}\times \zetaup(U_1)+ m_2 \times \zetaup(\shat{U})\\
		\hspace{2.5cm} -\ \big(\shat{m}\cdot (m_1+m_2)\big)m_1-\big(1+|m_2|^2\big)\shat{m} +\zetaup(\shat{U})  \ \ \ \ \ \ (x,t)\in \Omega_T,\\
		\frac{\partial \shat{m}}{\partial \eta}=0 \ \ \ \ \ (x,t) \in  \partial\Omega_T,\ \ \ \ \ \ \ \ \ \ \ 
		\shat{m}(\cdot,0)=0 \ \ \ \ \ \text{in} \ \Omega.
	\end{cases}	
\end{equation}
If we consider the $L^2$ inner product of system \eqref{EPD} with $\shat{m}$ and employing $(\shat{m}\times \Delta m_1)\cdot \shat{m}=0$, $(m_2\times \Delta \shat{m}) \cdot \shat{m}= -\nabla \shat{m}\cdot (\shat{m}\times \nabla m_2)$, we obtain
\begin{align}\label{TE-1}
	\frac{d}{dt}\|\shat{m}(t)\|^2_{L^2(\Omega)} + \|\nabla \shat{m}(t)\|^2_{L^2(\Omega)} \leq C\left(1+ \|m_1\|^2_{H^1(\Omega)}+\|m_2\|^2_{H^2(\Omega)}\right)\ \|\shat{m}\|^2_{H^1(\Omega)}+ \|\zetaup(\shat{U})\|^2_{L^2(\Omega)}
\end{align}
By calculating the gradient of equation \eqref{EPD} and subsequently taking the $L^2$ inner product with $\nabla \Delta \hat{m}$, while performing the estimates in a manner analogous to those in Lemma \ref{L-SLS}, we derive 
\begin{flalign}\label{TE-2}
	&\frac{d}{dt} \|\Delta \shat{m}(t)\|^2_{L^2(\Omega)} + \|\nabla \Delta \shat{m}(t)\|^2_{L^2(\Omega)} \leq C\ \left(1+\|m_2(t)\|^2_{H^2(\Omega)}\right)\|\zetaup(\shat{U})(t)\|^2_{H^1(\Omega)} +C\ \bigg[1+\|m_1(t)\|^2_{H^3(\Omega)} \nonumber\\	
	&\hspace{1.5cm} + \|m_2(t)\|^2_{H^3(\Omega)} + \|\zetaup(U_1)(t)\|^2_{H^1(\Omega)} +\|m_1(t)\|^4_{H^2(\Omega)} + \|m_2(t)\|^4_{H^2(\Omega)} \bigg] \ \|\shat{m}(t)\|^2_{H^2(\Omega)}.
\end{flalign}
Now, we will combine the estimates \eqref{TE-1} and \eqref{TE-2}, and then proceed to apply Gr\"onwall's inequality. This leads to the following estimate:
\begin{flalign}\label{TE-3}
	&\|\shat{m}\|^2_{L^\infty(0,T;H^2(\Omega))}+\|\shat{m}\|^2_{L^2(0,T;H^3(\Omega))} \leq   \left(1+\|m_2\|^2_{L^\infty(0,T;H^2(\Omega))}\right)\|\zetaup(\shat{U})\|^2_{L^2(0,T;H^1(\Omega))} \exp\left\{C\left[T+ \|m_1\|^2_{L^2(0,T;H^3(\Omega))}\right.\right.\nonumber\\
	&\hspace{1.2cm} \left.\left.+\|m_2\|^2_{L^2(0,T;H^3(\Omega))}+\  \|\zetaup(U_1)\|^2_{L^2(0,T;H^1(\Omega))}+T\ \|m_1\|^4_{L^\infty(0,T;H^2(\Omega))} + T\ \|m_2\|^4_{L^\infty(0,T;H^2(\Omega))}\right] \right\}.
\end{flalign}
Furthermore, we evaluate the $L^2(0,T;H^1(\Omega))$ norm of $\shat{m}_t$ in equation \eqref{EPD}. We then substitute the estimates for $\|\shat{m}\|^2_{L^\infty(0,T;H^2(\Omega))}$ and $\|\shat{m}\|^2_{L^2(0,T;H^3(\Omega))}$ from estimate \eqref{TE-3}. Additionally, incorporating the bounds for $\|m_k\|_{\mathcal{M}}$ for $k=1,2$ from the estimate \eqref{SSEE2}, along with the initial condition $\hat{m}(0)=m_1(0)-m_2(0)=0$, we derive the desired conclusion \eqref{LCCTSO}. This completes the proof.
\end{proof}

\begin{Pro}\label{P-CTS}
	Given any control $\bar{U}$ in the control set $\mathbb{U}_R$, let $m_{\bar{U}}$ denote the corresponding regular solution of system \eqref{NLP-EV}.
	Then the following statements hold:
	\begin{enumerate}[label=(\roman*)]
		\item The control-to-state mapping $G$ is Fr\'echet differentiable on $\mathbb{U}_R$. This means that for any $\bar{U}\in \mathbb{U}_R$, there exists a bounded linear operator $G'(\bar{U}):L^2(0,T;\mathbb{R}^N) \to \mathcal{M}$ with $G'(\bar{U})[U]:=z$, such that 
		$$\frac{\|G(\bar{U}+U)-G(\bar{U})-G'(\bar{U})[U]\|_{\mathcal{M}}}{\|U\|_{L^2(0,T;\mathbb{R}^N)}}\to 0 \ \ \ \ \ \text{as} \ \ \|U\|_{L^2(0,T;\mathbb{R}^N)}\to 0,$$
		where $z$ is the unique regular solution of the following linearized system:
		\begin{equation}\label{LS2}
			\ \ \ \ \ \ \ \ \ \ \ \begin{cases}
				\mathcal{L}_{\bar{U}}z= \zetaup(U)+ m_{\bar{U}} \times \zetaup(U)  \ \  \ \text{in} \ \ \Omega_T,\\
				\frac{\partial z}{\partial \eta}=0 \ \ \ \ \text{on} \ \partial \Omega_T,\\
				z(0)=0 \ \ \text{in} \ \Omega.	
			\end{cases}
		\end{equation}
		\item The Fr\'echet derivative $G'$ is Lipschitz continuous on $\mathbb{U}_R$, that is, for any controls $U_1,U_2 \in \mathbb{U}_R$ and $U\in L^2(0,T;\mathbb{R}^N)$, there exists a constant $C_3>0$ depending on $\Omega,T,R,m_0$ such that 
		\begin{equation}\label{LC-CTS}
		\|G'(U_1)[U]-G'(U_2)[U] \|_{\mathcal{M}} \leq C_3\  \|U_1-U_2\|_{L^2(0,T;\mathbb{R}^N)} \ \|U\|_{L^2(0,T;\mathbb{R}^N)}. 	
		\end{equation}
\end{enumerate}
\end{Pro}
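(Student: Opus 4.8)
The plan is to establish both statements by working with appropriate linearized systems and exploiting the regularity estimate from Lemma \ref{L-SLS} together with the Lipschitz continuity of $G$ from Lemma \ref{L-LCCTS}. For part (i), I would first check that the system \eqref{LS2} has a unique regular solution $z$: its right-hand side $\zetaup(U)+m_{\bar U}\times\zetaup(U)$ lies in $L^2(0,T;H^1(\Omega))$ by \eqref{PEST} and the product estimate \eqref{EE-3} (using $m_{\bar U}\in L^\infty(0,T;H^2(\Omega))$), so Lemma \ref{L-SLS} applies and gives $z\in\mathcal{M}$ with $\|z\|_{\mathcal M}\le C\|U\|_{L^2(0,T;\mathbb R^N)}$; this simultaneously shows $G'(\bar U)$ is a bounded linear operator. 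To prove Fréchet differentiability, set $m_{\bar U+U}$ and $m_{\bar U}$ for the states, and let $w:=m_{\bar U+U}-m_{\bar U}-z$. Subtracting the equations for $m_{\bar U+U}$, $m_{\bar U}$ and \eqref{LS2}, and writing everything in terms of $w$, $z$, and the difference $\delta:=m_{\bar U+U}-m_{\bar U}$, one finds that $w$ solves $\mathcal L_{\bar U}w = \mathcal R$ with zero initial data, where the remainder $\mathcal R$ collects quadratic-type terms such as $\delta\times\Delta\delta$, $(\delta\cdot\delta)m_{\bar U}$-type cubic leftovers, $\delta\times\zetaup(U)$, etc. The key point is that every term in $\mathcal R$ is quadratic in $(\delta,\zetaup(U))$ or is of the form (linear in $w$)$\times$(something bounded), so using Lemma \ref{L-SLS} once more yields $\|w\|_{\mathcal M}\le C\,\|\mathcal R\|_{L^2(0,T;H^1(\Omega))}$, and then the Lemma \ref{L-LCCTS} bound $\|\delta\|_{\mathcal M}\le C\|U\|_{L^2(0,T;\mathbb R^N)}$ gives $\|\mathcal R\|_{L^2(0,T;H^1(\Omega))}\le C\|U\|^2_{L^2(0,T;\mathbb R^N)}$, hence $\|w\|_{\mathcal M}=o(\|U\|_{L^2(0,T;\mathbb R^N)})$, which is exactly Fréchet differentiability.

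For part (ii), let $U_1,U_2\in\mathbb U_R$, $U\in L^2(0,T;\mathbb R^N)$, and put $z_j:=G'(U_j)[U]$, i.e.\ $z_j$ solves $\mathcal L_{U_j}z_j = \zetaup(U)+m_{U_j}\times\zetaup(U)$ with $z_j(0)=0$. Writing $\zeta:=z_1-z_2$ and subtracting the two linearized systems, $\zeta$ satisfies $\mathcal L_{U_1}\zeta = \mathcal S$ with $\zeta(0)=0$, where $\mathcal S$ contains, on one hand, the difference of right-hand sides $(m_{U_1}-m_{U_2})\times\zetaup(U)$, and on the other hand the difference of operators $(\mathcal L_{U_2}-\mathcal L_{U_1})z_2$, which expands into terms like $z_2\times\Delta(m_{U_1}-m_{U_2})$, $(m_{U_1}-m_{U_2})\times\Delta z_2$, $z_2\times\zetaup(U_1-U_2)$, $(\text{differences of }\widetilde m\cdot z_2)\widetilde m$, and $(|m_{U_1}|^2-|m_{U_2}|^2)z_2$. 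Applying Lemma \ref{L-SLS} to $\zeta$ gives $\|\zeta\|_{\mathcal M}\le C\|\mathcal S\|_{L^2(0,T;H^1(\Omega))}$; then estimating $\mathcal S$ via Lemma \ref{PROP2} (to control the various cross-products and cubic terms in the $L^2(0,T;H^1(\Omega))$ norm), using $\|m_{U_1}-m_{U_2}\|_{\mathcal M}\le C\|U_1-U_2\|_{L^2(0,T;\mathbb R^N)}$ from Lemma \ref{L-LCCTS}, the bound $\|z_2\|_{\mathcal M}\le C\|U\|_{L^2(0,T;\mathbb R^N)}$ from part (i), and \eqref{PEST} for $\zetaup(U_1-U_2)$, yields $\|\mathcal S\|_{L^2(0,T;H^1(\Omega))}\le C\|U_1-U_2\|_{L^2(0,T;\mathbb R^N)}\|U\|_{L^2(0,T;\mathbb R^N)}$, which is \eqref{LC-CTS}.

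The main obstacle is the careful bookkeeping in part (i): one must correctly identify which terms of $m_{\bar U+U}-m_{\bar U}-z$ are genuinely quadratic (and hence negligible) versus which are linear in $w$ and must be absorbed into the operator $\mathcal L_{\bar U}$ on the left-hand side. In particular the cross-product term $\delta\times\Delta\delta = \delta\times\Delta w + \delta\times\Delta z$ and the cubic nonlinearity $(1+|m|^2)m$ expanded around $m_{\bar U}$ generate mixed terms that need to be split with precision, and one must verify that each "quadratic" remainder genuinely carries two factors of $\delta$ or $\zetaup(U)$ so that the $o(\|U\|)$ conclusion holds; controlling these in the strong $L^2(0,T;H^1(\Omega))$ norm (rather than a weaker one) is where the estimates of Lemma \ref{PROP2} and the embedding $H^2(\Omega)\hookrightarrow L^\infty(\Omega)$ are essential. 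The argument for part (ii) is structurally the same linear-solve-plus-Lipschitz scheme and presents no new difficulty once part (i) is in place.
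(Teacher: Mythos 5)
Your proposal is correct and follows essentially the same route as the paper: well-posedness and boundedness of $z=G'(\bar U)[U]$ via Lemma \ref{L-SLS}, then the remainder equation $\mathcal L_{\bar U}w=\mathcal R$ for $w=m_{\bar U+U}-m_{\bar U}-z$ with $\mathcal R$ purely quadratic/cubic in $\shat m=m_{\bar U+U}-m_{\bar U}$ and $\zetaup(U)$ (the paper's \eqref{LOCTS}), estimated by Lemma \ref{PROP2} and the Lipschitz bound \eqref{LCCTSO} to get $\|w\|_{\mathcal M}=O(\|U\|^2)$; part (ii) is the identical linear-solve-plus-Lipschitz scheme applied to $z_{U_1}-z_{U_2}$. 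Your worry about splitting $\shat m\times\Delta\shat m$ into $w$ and $z$ pieces is unnecessary — that term is already quadratic in $\shat m$ and is bounded directly by \eqref{EE-2}, exactly as the paper does.
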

\begin{proof}
	 Clearly, for any control $U\in L^2(0,T;\mathbb{R}^N)$, the terms $\zetaup(U), m_{\bar{U}}\times \zetaup(U) \in L^2(0,T;H^1(\Omega))$. Consequently, according to Lemma \ref{L-SLS}, there exists a unique regular solution $z \in \mathcal{M}$ for system \eqref{LS2}. This formalizes the clarification, making it clear that $z$ is just a notation and not a derivative yet. If $m_{\bar{U}+U}$ be the state associated to the control $\bar{U}+U$, then $w:=m_{\bar{U}+U}-m_{\bar{U}}-z$ satisfies the subsequent system:
	\begin{equation}\label{LOCTS}
	\begin{cases}
		\displaystyle \mathcal{L}_{\bar{U}}w=  \shat{m} \times \Delta \shat{m} +\shat{m}\times \zetaup(U)-|\shat{m}|^2\shat{m}-|\shat{m}|^2m_{\bar{U}} -2 (\shat{m}\cdot m_{\bar{U}})\shat{m} \ \ \ \ \ \text{in}\ \Omega_T,\\
		\frac{\partial w}{\partial \eta}=0 \ \ \ \ \text{on} \ \partial \Omega_T,  \ \ \ \ w(0)=0 \ \ \text{in} \ \Omega,	
	\end{cases}
\end{equation}
where $\mathcal{L}_{\bar{U}}$ is defined in \eqref{CLO}. Applying estimates \eqref{EE-2} and \eqref{EE-3} for the first two terms on the right hand side of equation \eqref{LOCTS}, we find
\begin{flalign*}
	\|\shat{m}\times \Delta \shat{m}\|^2_{L^2(0,T;H^1(\Omega))} &\leq C\ \|\shat{m}\|^2_{L^\infty(0,T;H^2(\Omega))} \ \|\shat{m}\|^2_{L^2(0,T;H^3(\Omega))},\\
	\|\shat{m}\times \zetaup(U)\|^2_{L^2(0,T;H^1(\Omega))} &\leq C\ \|\shat{m}\|^2_{L^\infty(0,T;H^2(\Omega))}\ \|\zetaup(U)\|^2_{L^2(0,T;H^1(\Omega))}.
\end{flalign*}
Now, by employing estimate \eqref{EE-6} for the last three terms of equation \eqref{LOCTS}, we obtain
\begin{align*}
		&\left\| |\shat{m}|^2\shat{m}\right\|^2_{L^2(0,T;H^1(\Omega))} +\left\| |\shat{m}|^2m_{\bar{U}}\right\|^2_{L^2(0,T;H^1(\Omega))} + \big\|2(\shat{m}\cdot m_{\bar{U}})\shat{m}\big\|^2_{L^2(0,T;H^1(\Omega))} \\
		&\hspace{1.5cm} \leq C\ \left(\|m_{\bar{U}}\|^2_{L^2(0,T;H^2(\Omega))} +\|\shat{m}\|^2_{L^2(0,T;H^2(\Omega))}\right)\|\shat{m}\|^4_{L^\infty(0,T;H^2(\Omega))}. 
\end{align*}
Utilizing all of these estimates and invoking Lemma \ref{L-SLS}, we can establish the subsequent estimate:
\begin{eqnarray*}
	\lefteqn{\mathcal{B}(w_{\infty},w_2,{w_t}_2) \leq \bigg[\ \left( \|\shat{m}\|^2_{L^2(0,T;H^3(\Omega))}+\|\zetaup(U)\|^2_{L^2(0,T;H^1(\Omega))}\right)\|\shat{m}\|^2_{L^\infty(0,T;H^2(\Omega))}}\nonumber\\
	&&\hspace{1cm} +\ \left(\|m_{\bar{U}}\|^2_{L^2(0,T;H^2(\Omega))} +\|\shat{m}\|^2_{L^2(0,T;H^2(\Omega))}\right)\|\shat{m}\|^4_{L^\infty(0,T;H^2(\Omega))}\bigg]\times \ \exp\bigg\{C  \left(1+\|m_{\bar{U}}\|^2_{L^\infty(0,T;H^2(\Omega))} \right.\nonumber\\
	&&\hspace{1cm}\left.+\ \|m_{\bar{U}}\|^2_{L^\infty(0,T;H^1(\Omega))} \|m_{\bar{U}}\|^2_{L^2(0,T;H^2(\Omega))}+\|m_{\bar{U}}\|^2_{L^2(0,T;H^3(\Omega))}+ \|\zetaup(\bar{U})\|^2_{L^2(0,T;H^1(\Omega))}\right) \bigg\}.\hspace{1.5cm}
\end{eqnarray*}
Using the estimate \eqref{SSEE2}, we can establish a bound for $\|m_{\bar{U}}\|_{\mathcal{M}}$ in terms of $\Omega,T, m_0,\bar{U}$. By leveraging the Lipschitz continuity estimate for the control-to-state operator \eqref{LCCTSO} and \eqref{PEST}, we obtain
\begin{equation}\label{X1}
\mathcal{B}(w_{\infty},w_2,{w_t}_2)  \leq C\ \|U\|^4_{L^2(0,T;\mathbb{R}^N)}+C\ \|U\|^6_{L^2(0,T;\mathbb{R}^N)}.	
\end{equation}
Hence, $\frac{\|w\|_{\mathcal{M}}}{\|U\|_{L^2(0,T;\mathbb{R}^N)}}\to 0 $ as $\|U\|_{L^2(0,T;\mathbb{R}^N)} \to 0$. This implies the Fr\`echet differentiability of the control-to-state operator and confirms the equality $z=G'(\bar{U})[U]$. Consequently, the proof of (i) is concluded. 

Next, we will show the Lipschitz continuity of the Fr\'echet derivative of the control-to-state operator. Suppose $U_1,U_2$ be any two control variables in $\mathbb{U}_R$ and $U\in L^2(0,T;\mathbb{R}^N)$. Consider $z_{U_1}:=G'(U_1)[U]$ and $z_{U_2}:=G'(U_2)[U]$ as two distinct regular solutions of the linearized system \eqref{LS2}. If we denote  $\shat{z}=z_{U_1}-z_{U_2}$, $\shat{m}=m_{U_1}-m_{U_2}$ and $\shat{U}=U_1-U_2$, then the pair $(\shat{z},\shat{U})$ satisfies the subsequent system:
	\begin{equation*}
	\begin{cases}
		\displaystyle \mathcal{L}_{U_1}\shat{z}= \sum_{k=1}^{3} \Theta_k,\\
		\frac{\partial \shat{z}}{\partial \eta}=0 \ \ \ \ \text{on} \ \partial \Omega_T,  \ \ \ \ \shat{z}(0)=0 \ \ \text{in} \ \Omega,	
	\end{cases}
\end{equation*}
where $\mathcal{L}_{U_1}$ is defined in \eqref{CLO} and the terms $\Theta_k$'s are given by
$$
\begin{array}{llll}
	\Theta_1= z_{U_2} \times \Delta \shat{m} + \shat{m}\times \Delta z_{U_2}, \ \ \ \  \Theta_2= z_{U_2} \times \zetaup(\shat{U})+\shat{m} \times \zetaup(U),\\
	\Theta_3= -2(\shat{m} \cdot z_{U_2})m_{U_1} -2(m_{U_2}\cdot z_{U_2}) \shat{m} -\big(\shat{m}\cdot (m_{U_1}+m_{U_2})\big)z_{U_2}.
\end{array}
$$
Again by employing Lemma \ref{L-SLS} and utilizing the initial condition $z(0)=0$, we obtain 
\begin{align}\label{DCTS-E4}
	\mathcal{B}(\shat{z}_{\infty}&,\shat{z}_2,{\shat{z}_t}_2) \leq \sum_{k=1}^3   \|\Theta_k\|^2_{L^2(0,T;H^1(\Omega))} \times \exp\bigg\{C  \left(1+\|m_{U_1}\|^2_{L^\infty(0,T;H^1(\Omega))} \|m_{U_1}\|^2_{L^2(0,T;H^2(\Omega))}   \right.\nonumber\\
	&\hspace{2cm}\left.+\ \|m_{U_1}\|^2_{L^\infty(0,T;H^2(\Omega))}+\|m_{U_1}\|^2_{L^2(0,T;H^3(\Omega))}+ \|\zetaup(U_1)\|^2_{L^2(0,T;H^1(\Omega))}\right) \bigg\}.\ \ \ \ \ \ \ \ \ \  
\end{align}

Now, let's proceed to estimate the terms containing $\Theta_k$'s on the right-hand side. By applying estimates \eqref{EE-2} and \eqref{EE-3} for the terms involving $\Theta_1$ and $\Theta_2$ respectively, we obtain
\begin{align*}
	\|\Theta_1\|^2_{L^2(0,T;H^1(\Omega))} &\leq C\ \|z_{U_2}\|^2_{L^\infty(0,T;H^2(\Omega))}  \|\shat{m}\|^2_{L^2(0,T;H^3(\Omega))} + C\ \|\shat{m}\|^2_{L^\infty(0,T;H^2(\Omega))}  \|z_{U_2}\|^2_{L^2(0,T;H^3(\Omega))},\\
	\|\Theta_2\|^2_{L^2(0,T;H^1(\Omega))} &\leq C\ \|z_{U_2}\|^2_{L^\infty(0,T;H^2(\Omega))}\ \|\zetaup(\shat{U})\|^2_{L^2(0,T;H^1(\Omega))}+C\ \|\shat{m}\|^2_{L^\infty(0,T;H^2(\Omega))}\ \|\zetaup(\shat{U})\|^2_{L^2(0,T;H^1(\Omega))}.
\end{align*}
Likewise, utilizing estimate \eqref{EE-6} for the term $\|\Theta_3\|^2_{L^2(0,T;H^1(\Omega))}$, we deduce
\begin{align*}
	\|\Theta_3\|^2_{L^2(0,T;H^1(\Omega))} \leq C\  \left(\|m_{U_1}\|^2_{L^2(0,T;H^2(\Omega))}+\|m_{U_2}\|^2_{L^2(0,T;H^2(\Omega))}\right) \|\shat{m}\|^2_{L^\infty(0,T;H^2(\Omega))}\|z_{U_2}\|^2_{L^\infty(0,T;H^2(\Omega))}.
\end{align*}

 Given that for each $k=1,2$, the control $U_k\in \mathbb{U}_R$ and $m_{U_k}$ constitutes a regular solution of system \eqref{NLP-EV} corresponding to the control $U_k$. Therefore, implementing estimate \eqref{SSEE2}, we can find the bound as $\|m_{U_k}\|_{\mathcal{M}}\leq C(\Omega,T,R,m_0)$. By incorporating these bounds in the energy estimate for $z_{U_2}$, we deduce from \eqref{FDCTS-S} that $\|z_{U_2}\|_{\mathcal{M}}\leq C\ \|U\|_{L^2(0,T;\mathbb{R}^N)}$. Finally, we combine all the estimates for $\|\Theta_k\|^2_{L^2(0,T;H^1(\Omega))}$ for $k=1,2,3$ and substitute in \eqref{DCTS-E4}. Furthermore, making use of the Lipschitz continuity property of the control-to-state operator from estimate \eqref{LCCTSO} and boundedness of the operator $\zetaup$ from estimate \eqref{PEST}, we arrive at
$$\mathcal{B}(\shat{z}_{\infty},\shat{z}_2,{\shat{z}_t}_2) \leq C(\Omega,T,R)\ \|\shat{U}\|^2_{L^2(0,T;\mathbb{R}^N)} \ \|U\|^2_{L^2(0,T;\mathbb{R}^N)}.$$
This completes the proof.
\end{proof}

\begin{Cor}\label{C-CTSD}
The Fr\'echet derivative of the control-to-state operator satisfies the following inequality:
\begin{equation}\label{FDCTS-S}
	\|G'(U)[V]\|_{\mathcal{M}} \leq C(\Omega,T, U,m_0)\ \|V\|_{L^2(0,T;\mathbb{R}^N)}.
\end{equation} 	
\end{Cor}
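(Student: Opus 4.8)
The plan is to obtain the estimate directly by specializing Lemma \ref{L-SLS} to the linearized system defining $G'(U)[V]$. By Proposition \ref{P-CTS}(i), the element $z := G'(U)[V]$ is the unique regular solution of \eqref{LS2}, i.e. $\mathcal{L}_U z = f$ with $f := \zetaup(V) + m_U \times \zetaup(V)$, homogeneous Neumann data and $z(0)=0$, where $m_U = G(U)$ and $\mathcal{L}_U$ is the operator in \eqref{CLO}. Hence all that remains is (a) to bound $\|f\|_{L^2(0,T;H^1(\Omega))}$ linearly in $\|V\|_{L^2(0,T;\mathbb{R}^N)}$, and (b) to check that the exponential prefactor in \eqref{LSSE} is a finite constant depending only on $\Omega,T,U,m_0$.

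For step (a), the boundedness of $\zetaup$ from \eqref{PEST} gives $\|\zetaup(V)\|_{L^2(0,T;H^1(\Omega))} \leq \max_{1\le k\le N}\|B_k\|_{H^1(\Omega)}\,\|V\|_{L^2(0,T;\mathbb{R}^N)}$, while \eqref{EE-3} with $\xi=m_U$ and $\omega=\zetaup(V)$ yields $\|m_U\times\zetaup(V)\|_{L^2(0,T;H^1(\Omega))} \leq C\,\|m_U\|_{L^\infty(0,T;H^2(\Omega))}\,\|\zetaup(V)\|_{L^2(0,T;H^1(\Omega))}$. Since $m_U$ is a regular solution of \eqref{NLP-EV} for the control $U$, Theorem \ref{T-RS} together with the energy estimate \eqref{SSEE2} bounds $\|m_U\|^2_{L^\infty(0,T;H^2(\Omega))} + \|m_U\|^2_{L^2(0,T;H^3(\Omega))} \leq M(m_0,\zetaup(U),\Omega,T)$, a quantity controlled by $\Omega,T,U,m_0$ (for $n=3$ this uses the global solvability guaranteed because $U\in\mathbb{U}_R$, equivalently the smallness built into $\mathbb{U}_{sa}$). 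Combining, $\|f\|_{L^2(0,T;H^1(\Omega))} \leq C(\Omega,T,U,m_0)\,\|V\|_{L^2(0,T;\mathbb{R}^N)}$.

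For step (b), the exponential in \eqref{LSSE} depends only on $\|m_U\|_{L^\infty(0,T;H^2)}$, $\|m_U\|_{L^\infty(0,T;H^1)}$, $\|m_U\|_{L^2(0,T;H^2)}$, $\|m_U\|_{L^2(0,T;H^3)}$ and $\|\zetaup(U)\|_{L^2(0,T;H^1)}$, all of which are again bounded via \eqref{SSEE2} and \eqref{PEST} by a constant $C(\Omega,T,U,m_0)$. Therefore Lemma \ref{L-SLS} applied with $z_0=0$ gives
\begin{equation*}
	\mathcal{B}(z_\infty,z_2,{z_t}_2) = \|z\|^2_{L^\infty(0,T;H^2(\Omega))} + \|z\|^2_{L^2(0,T;H^3(\Omega))} + \|z_t\|^2_{L^2(0,T;H^1(\Omega))} \leq C(\Omega,T,U,m_0)\,\|V\|^2_{L^2(0,T;\mathbb{R}^N)}.
\end{equation*}
Since $\|G'(U)[V]\|^2_{\mathcal{M}} = \|z\|^2_{L^2(0,T;H^3(\Omega))} + \|z_t\|^2_{L^2(0,T;H^1(\Omega))}$ is dominated by $\mathcal{B}(z_\infty,z_2,{z_t}_2)$, taking square roots yields \eqref{FDCTS-S}. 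There is essentially no obstacle here: the statement is a bookkeeping corollary of Lemma \ref{L-SLS}, and the only point requiring mild care is confirming that every norm of $m_U$ entering \eqref{LSSE} is genuinely controlled by the energy bound \eqref{SSEE2}, which for $n=3$ presupposes the global-in-time regular solution ensured by the smallness condition underlying $\mathbb{U}_R$.
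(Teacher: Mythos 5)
Your proposal is correct and follows exactly the paper's route: the paper proves this corollary in one line by citing Lemma \ref{L-SLS} with $z_0=0$ and $f=\zetaup(V)+m_{\bar U}\times\zetaup(V)$, and your write-up simply makes explicit the bookkeeping (bounding $f$ via \eqref{PEST} and \eqref{EE-3}, and controlling the exponential prefactor via \eqref{SSEE2}) that the paper leaves implicit.
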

\noindent The proof of Corollary \ref{C-CTSD} is a direct consequence of Lemma \ref{L-SLS} with $z(0)=0$ and $f=\zetaup(V)+m_{\bar{U}}\times \zetaup(V)$.

\subsection{Solvability of Adjoint System:}\label{SS-SOAS}

The adjoint problem is indispensable for deriving optimality conditions as it systematically incorporates constraints and facilitates the formulation of necessary conditions through Lagrange multipliers. 

Assume $(\widetilde{m},\widetilde{U})$ represents an admissible pair for the control problem (OCP). Now, consider the following adjoint system: 
\begin{equation}\label{CLAS}
	(AL-LLG)\begin{cases}
		\begin{array}{l}
			\mathcal{E}_{\widetilde{U}}\phi=g \ \ \ \ \text{in}\ \Omega_T,\\
			\frac{\partial \phi}{\partial \eta}=0 \ \ \ \ \ \  \text{in}\ \partial \Omega_T, \ \ \ \phi(x,T)=\phi_T\ \ \text{in}\ \Omega,
		\end{array}
	\end{cases}	
\end{equation}
where the operator $\mathcal{E}_{\widetilde{U}}$ is defined as
\begin{equation*}\label{CLAO}
	\mathcal{E}_{\widetilde{U}}\phi:= \phi_t + \Delta \phi  + \Delta (\phi \times \widetilde{m})+(\Delta  \widetilde{m}\times \phi)-(\phi \times \zetaup(\widetilde{U})) -\left(1+|\widetilde{m}|^2\right)\phi -2 \big(\widetilde{m} \cdot \phi \big) \widetilde{m}.	
\end{equation*}

The weak formulation of system \eqref{CLAS} follows from the same notion given in Definition \ref{AWSD} by replacing the right-hand side term with $\int_0^T  \langle g(t),\vartheta(t)																																																																																																																																																																																																																																																																																																																																																																																																																																																																																																																																																																																																																																																																																																																									\rangle_{H^1(\Omega)^*\times H^1(\Omega)}\ dt$, for any $g\in L^2(0,T;H^1(\Omega)^*)$.

\begin{Lem}\label{AL-SLS}
	For any $g\in L^2(0,T;H^1(\Omega)^*)$ there exists a unique weak solution $\phi \in \mathcal{Z}$ of the linear system \eqref{CLAS} in the sense of Definition \ref{AWSD}. Moreover, the following estimate holds:
	\begin{equation*}
		\mathcal{D}(\phi_{\infty},\phi_2,{\phi_t}_2) \leq \left(\|\phi_T\|^2_{L^2(\Omega)}+  \|g\|^2_{L^2(0,T;H^1(\Omega)^*)}\right) \ \mathcal{K}(\widetilde{m},\widetilde{U}),
	\end{equation*}
	where $\mathcal{D}(\phi_{\infty},\phi_2,{\phi_t}_2):=\|\phi\|^2_{L^{\infty}(0,T;L^2(\Omega))}+ \|\phi\|^2_{L^2(0,T;H^1(\Omega))} +\|\phi_t\|^2_{L^2(0,T;H^1(\Omega)^*)}$ and 
	
	$\mathcal{K}(\widetilde{m},\widetilde{U}):= \exp\bigg\{C  \left(1+ \|\widetilde{m}\|^4_{L^\infty(0,T;H^1(\Omega))}+\|\widetilde{m}\|^2_{L^\infty(0,T;H^2(\Omega))}+\|\widetilde{m}\|^2_{L^2(0,T;H^3(\Omega))}+\|\zetaup(\widetilde{U})\|^2_{L^2(0,T;H^1(\Omega))} \right) \bigg\}$.
\end{Lem}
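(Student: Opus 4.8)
The plan is to reverse time in \eqref{CLAS}, which converts it into a \emph{forward} linear parabolic system with initial datum $\phi_T$, and then to run the Faedo-Galerkin scheme on the eigenbasis $\{\xi_j\}$ of $-\Delta+I$ with Neumann conditions already used in the proof of Theorem \ref{T-SS}. Since \eqref{CLAS} is \emph{linear} in $\phi$ --- every nonlinearity is carried by the fixed data $\widetilde m\in\mathcal M$ and $\widetilde U\in\mathbb U_{ad}$ --- the only substantive work is to derive \emph{a priori} bounds uniform in the Galerkin index; the passage to the limit in the weak formulation of Definition \ref{AWSD}, the recovery of the terminal condition, and uniqueness are then immediate consequences of those bounds together with weak compactness.

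For the basic energy estimate I would test the truncated, time-reversed equation with $\phi$ itself. By the vector identities of Lemma \ref{CPP} the terms coming from $\Delta\widetilde m\times\phi$ and $\phi\times\zetaup(\widetilde U)$ vanish pointwise against $\phi$, and the Laplacian produces the favorable dissipative term $\|\nabla\phi\|_{L^2(\Omega)}^2$ on the left-hand side. The delicate contribution is $\Delta(\phi\times\widetilde m)$: after integration by parts it equals $-\int_\Omega\nabla(\phi\times\widetilde m)\cdot\nabla\phi$, and since $(\nabla\phi\times\widetilde m)\cdot\nabla\phi=0$ this reduces to $-\int_\Omega(\phi\times\nabla\widetilde m)\cdot\nabla\phi$; using $\widetilde m\in L^2(0,T;H^3(\Omega))\hookrightarrow L^2(0,T;W^{1,\infty}(\Omega))$, H\"older's and Young's inequalities bound it by $\tfrac14\|\nabla\phi\|_{L^2(\Omega)}^2+C\|\nabla\widetilde m\|_{L^\infty(\Omega)}^2\|\phi\|_{L^2(\Omega)}^2$, where $\|\nabla\widetilde m\|_{L^\infty(\Omega)}^2$ is integrable in $t$ with integral bounded by $C\|\widetilde m\|_{L^2(0,T;H^3(\Omega))}^2$. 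The zeroth-order terms $\int_\Omega(1+|\widetilde m|^2)|\phi|^2+2\int_\Omega|\widetilde m\cdot\phi|^2$ are controlled, via \eqref{ES16} and (for $n=2$) \eqref{ES5}, by $C(1+\|\widetilde m\|_{H^1(\Omega)}^4+\|\widetilde m\|_{H^2(\Omega)}^2)\|\phi\|_{L^2(\Omega)}^2$, and the source by $\langle g,\phi\rangle\le\tfrac14\|\phi\|_{H^1(\Omega)}^2+C\|g\|_{H^1(\Omega)^*}^2$. Absorbing the $\|\nabla\phi\|_{L^2}^2$ pieces on the left and invoking Gr\"onwall's inequality with datum $\|\phi_T\|_{L^2(\Omega)}$ then yields the bound for $\|\phi\|_{L^\infty(0,T;L^2(\Omega))}^2+\|\phi\|_{L^2(0,T;H^1(\Omega))}^2$ by $(\|\phi_T\|_{L^2(\Omega)}^2+\|g\|_{L^2(0,T;H^1(\Omega)^*)}^2)$ times an exponential factor.

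For the time derivative I would read $\phi_t$ off the equation and estimate each term in $L^2(0,T;H^1(\Omega)^*)$: the linear Laplacian obeys $\|\Delta\phi\|_{L^2(0,T;H^1(\Omega)^*)}\le\|\nabla\phi\|_{L^2(0,T;L^2(\Omega))}$, while the remaining products are precisely those covered by Lemma \ref{L-CP} --- $\Delta(\phi\times\widetilde m)$ by \eqref{AEE-3}, $\Delta\widetilde m\times\phi$ by \eqref{AEE-4}, $\phi\times\zetaup(\widetilde U)$ by \eqref{AEE-8}, and $(1+|\widetilde m|^2)\phi$ together with $(\widetilde m\cdot\phi)\widetilde m$ (using the pointwise domination $|(\widetilde m\cdot\phi)\widetilde m|\le|\widetilde m|^2|\phi|$) by \eqref{AEE-9}--\eqref{AEE-10}. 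Substituting the bounds for $\|\phi\|_{L^\infty(0,T;L^2)}$ and $\|\phi\|_{L^2(0,T;H^1)}$ just obtained dominates $\|\phi_t\|_{L^2(0,T;H^1(\Omega)^*)}^2$ by $(\|\phi_T\|_{L^2(\Omega)}^2+\|g\|_{L^2(0,T;H^1(\Omega)^*)}^2)$ times a polynomial in the $\widetilde m$- and $\zetaup(\widetilde U)$-norms, which is in turn absorbed into the exponential $\mathcal K(\widetilde m,\widetilde U)$ --- this accounts for the appearance of $\|\widetilde m\|_{L^\infty(0,T;H^1(\Omega))}^4$ and $\|\zetaup(\widetilde U)\|_{L^2(0,T;H^1(\Omega))}^2$ inside $\mathcal K$. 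Collecting the three bounds gives the asserted estimate for $\mathcal D(\phi_{\infty},\phi_2,{\phi_t}_2)$. With these uniform estimates, weak-$*$ and weak compactness yield $\phi_n\rightharpoonup\phi$ in $L^\infty(0,T;L^2(\Omega))$ and $L^2(0,T;H^1(\Omega))$ with $(\phi_n)_t\rightharpoonup\phi_t$ in $L^2(0,T;H^1(\Omega)^*)$; linearity permits passing to the limit term by term in Definition \ref{AWSD}, and since $\phi\in L^2(0,T;H^1(\Omega))$ with $\phi_t\in L^2(0,T;H^1(\Omega)^*)$ the Lions--Magenes embedding gives $\phi\in C([0,T];L^2(\Omega))$, so that $\phi(T)=\phi_T$ is meaningful and attained. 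Uniqueness follows by applying the energy estimate to the difference of two solutions, which solves $\mathcal E_{\widetilde U}\phi=0$ with $\phi(T)=0$.

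The main obstacle is the energy estimate, and within it the term $\Delta(\phi\times\widetilde m)$: its integration-by-parts remainder $\int_\Omega(\phi\times\nabla\widetilde m)\cdot\nabla\phi$ carries a prefactor that cannot be made small, so closing the estimate hinges on the regularity $\widetilde m\in L^2(0,T;H^3(\Omega))$ (whence $\nabla\widetilde m\in L^2(0,T;L^\infty(\Omega))$), which renders that prefactor only time-integrable and thus admissible in Gr\"onwall's inequality; the same regularity, together with the $L^\infty(0,T;H^2(\Omega))$ bound on $\widetilde m$, pins down the precise powers of $\widetilde m$ appearing inside $\mathcal K(\widetilde m,\widetilde U)$.
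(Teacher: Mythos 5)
Your proposal is correct and follows essentially the same route as the paper: a Faedo--Galerkin scheme on the eigenbasis of $-\Delta+I$, an energy estimate obtained by testing with $\phi$ in which $(\Delta\widetilde m\times\phi)\cdot\phi$ and $(\phi\times\zetaup(\widetilde U))\cdot\phi$ cancel and the critical term $\Delta(\phi\times\widetilde m)$ is integrated by parts and reduced via $(\partial_i\phi\times\widetilde m)\cdot\partial_i\phi=0$ to a contribution controlled by $\|\widetilde m\|^2_{H^3}\|\phi\|^2_{L^2}$ inside Gr\"onwall, followed by the Lemma \ref{L-CP} estimates for $\phi_t$ and weak compactness. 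The only (harmless) deviations are cosmetic: time reversal instead of a backward Gr\"onwall, and bounding the zeroth-order terms $\int_\Omega(1+|\widetilde m|^2)|\phi|^2+2\int_\Omega|\widetilde m\cdot\phi|^2$ even though they carry a favorable sign and could simply be dropped.
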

\begin{proof}
Consider the identical orthonormal basis $\{w_j\}_{j=1}^{\infty}$ established in Theorem \ref{T-SS}. Through the application of solvability of ordinary differential equations, we can ascertain a solution $\phi_n=\sum_{j=1}^{n}g_{jn}(t)w_j$ for each $j=1,\cdots,n$ of the following approximated system:
\begin{equation}\label{APGS}
	\begin{cases}
		-\big(\phi_n'(t),w_j\big)+\big(\nabla\phi_n(t),\nabla w_j\big)= -\ \big(\nabla  (\phi_n(t) \times \widetilde{m}(t)),\nabla w_j\big)+\big(\Delta \widetilde{m}(t)\times \phi_n(t),w_j\big)\\
		\ \ \ \ \  -\ \big(\phi_n(t) \times  \zetaup(\widetilde{U})(t) ,w_j\big)-\ \left(\left(1+|\widetilde{m}(t)|^2\right)\phi_n(t),w_j\right) -2 \left( \big(\widetilde{m}(t)\cdot \phi_n(t)\big)\widetilde{m}(t),w_j\right) -\ \langle g(t), w_j \rangle,\ \ \ \ \ \ \ \  \\
		\phi_n(T)=\mathbb{P}_n\big(\phi_T\big).
	\end{cases}
\end{equation}
By performing multiplication on \eqref{APGS} with the factor $g_{jn}(t)$, summing the resulting expression over values of $j$ ranging from $1$ to $n$, and applying the property that $a\cdot (a \times b)=0$, we derive
\begin{align*}
	&-\frac{1}{2}\frac{d}{dt}\|\phi_n(t)\|^2_{L^2(\Omega)}+ \int_\Omega |\nabla \phi_n(t)|^2 dx+\int_{\Omega} \left(1+|\widetilde{m}(t)|^2\right) |\phi_n(t)|^2 + 2 \int_\Omega \big| \widetilde{m}(t)\cdot \phi_n(t)\big|^2 \ dx  \\
	&\hspace{3.5cm} = -\int_\Omega \nabla\big(\phi_n(t) \times \widetilde{m}(t)\big)\cdot \nabla\phi_n(t) \ dx - \langle g(t),\phi_n(t) \rangle.
\end{align*}	 
Utilizing H\"older's inequality along with the embeddings $H^1(\Omega)\hookrightarrow L^4(\Omega)$ and $H^2(\Omega) \hookrightarrow L^\infty(\Omega)$, we arrive at
\begin{equation*}
	-\frac{d}{dt}\|\phi_n(t)\|^2_{L^2(\Omega)}+ \int_\Omega |\nabla \phi_n(t)|^2 dx\leq C\  \|g(t)\|^2_{H^1(\Omega)^*} + \ C\ \left(1+\|\widetilde{m}(t)\|^2_{H^3(\Omega)}\right) \|\phi_n(t)\|^2_{L^2(\Omega)}.
\end{equation*}
Through the utilization of Gronwall's inequality and making use of the inequality $\|\phi_n(T)\|_{L^2(\Omega)}\leq \|\phi_T\|_{L^2(\Omega)}$, we deduce the following inequality on $[0,T]$:
\begin{equation}\label{AEE1}
	\|\phi_n(t)\|^2_{L^2(\Omega)}+\int_t^T\int_\Omega |\nabla \phi_n(\tau)|^2 dx\ d\tau \leq  \Big(\|\phi_T\|^2_{L^2(\Omega)} +  \|g\|^2_{L^2(0,T;H^1(\Omega)^*)}  \Big) \ \exp{\left\{C \ \left(1+ \int_0^T \|\widetilde{m}(\tau)\|^2_{H^3(\Omega)} d\tau\right) \right\} }.
\end{equation}
Thus, the sequence $\big\{\phi_n\big\}$ is uniformly bounded in $L^\infty(0,T;L^2(\Omega)) \cap L^2(0,T;H^1(\Omega))$.

Next, proceeding in a similar way to Theorem 5.5 of \cite{SPSK}, we can obtain a uniform bound for the sequence $\big\{\phi_n'\big\}$ in the space $L^2(0,T;H^1(\Omega)^*)$ as follows:
\begin{align}
	\int_0^T \|\phi_n'(t)\|^2_{H^1(\Omega)^*} dt &\leq C\  \left[  \left(1+ \|\widetilde{m}\|^4_{L^\infty(0,T;H^1(\Omega))} + \|\widetilde{m}\|^2_{L^\infty(0,T;H^2(\Omega))}\right) \|\phi_n\|^2_{L^2(0,T;H^1(\Omega))} \right.&\nonumber\\
	& \left.\ \ \ \ \  +\   \|\zetaup(\widetilde{U})\|^2_{L^2(0,T;H^1(\Omega))}\  \|\phi_n\|^2_{L^\infty(0,T;L^2(\Omega))}+\ \|g\|^2_{L^2(0,T;H^1(\Omega)^*)}\right].\nonumber
\end{align}

Hence, by incorporating the uniform bound for $\big\{\phi_n\big\}$ in $L^\infty(0,T;L^2(\Omega))$ and $L^2(0,T;H^1(\Omega))$ from inequality \eqref{AEE1} into the previously obtained estimate, it follows that $\{\phi_n^\prime\}$ is uniformly bounded in $L^2(0, T;H^1(\Omega)^*)$. Moreover, invoking both the Alaoglu's weak* compactness theorem and the reflexive weak compactness theorem, we can conclude:
\begin{eqnarray} \left\{\begin{array}{cccll}
		\phi_n &\overset{w}{\rightharpoonup} & \phi \  &\mbox{weakly in}& \ L^2(0,T;H^1(\Omega)),\\
		\phi_n &\overset{w^\ast}{\rightharpoonup} & \phi \ &\mbox{weak$^*$ in}& \ L^{\infty}(0,T;L^2(\Omega)),\\
		\phi^\prime_n &\overset{w}{\rightharpoonup} & \phi^\prime \ &\mbox{weakly in}& \ L^2(0,T;H^1(\Omega)^*), \ \ \mbox{as} \  \ n\to \infty. \label{AEWC}
	\end{array}\right.	
\end{eqnarray}

Once more, the Aubin-Lions-Simon lemma (see, Corollary 4, \cite{JS}) plays a crucial role in establishing the existence of a subsequence from $\{\phi_n\}$ (which we continue to denote as $\{\phi_n\}$) such that $\phi_n \overset{s}{\to} \phi$ strongly in $L^2(0,T;L^2(\Omega))$. Leveraging this strong convergence in conjunction with \eqref{AEWC}, we can verify that $\phi$ satisfies condition (i) of Definition \ref{AWSD} for each $\vartheta \in \text{span}(w_1,w_2,...)$. By the density of such functions in $H^1(\Omega)$, this holds true for every $\vartheta \in H^1(\Omega)$. This concludes the proof.
\end{proof}

\subsection{First Order Optimality Condition}\label{SS-FOOC}

In general, Theorem \ref{T-EOOC} does not guarantee the uniqueness of the globally optimal solution. Due to the inherent nonlinearity of the control-to-state operator, we cannot necessarily assume convexity of the cost functional. Consequently, the optimization problem might encompass multiple locally optimal solutions or even several distinct globally optimal solutions. In the ensuing discussion, it is important to note that numerical methods typically tend to identify local minimizers, thereby motivating our focus on delineating locally optimal solutions through essential optimality conditions.

\begin{proof}[Proof of Theorem \ref{FOOCT}]
	
	 As $\mathbb{U}_{ad}$ is a convex set, we have that $\widetilde{U}+\epsilon (U-\widetilde{U})$ lies within $\mathbb{U}_{ad}$ for any control $U,\widetilde{U}\in \mathbb{U}_{ad}$ and $0\leq \epsilon\leq 1$. Since the control-to-state operator is Fréchet differentiable, the reduced functional $\mathcal{I}$ is consequently also Fréchet differentiable. Moreover, since we assume that $\widetilde{U}$ is an optimum, the functional $\mathcal{I}(\cdot)$ satisfies $\mathcal{I}'(\widetilde{U})[U-\widetilde{U}] \geq 0$ for all $ U \in \mathbb{U}_{ad}$. Now, by setting $v=U-\widetilde{U}$ and applying chain rule, we have
	\begin{flalign}
		 \mathcal{I}' &(\widetilde{U}) [v]  = \frac{\partial \mathcal{J}}{\partial m}(\widetilde{m},\widetilde{U}) \circ\left(  G'(\widetilde{U}) [v]\right) + \frac{\partial \mathcal J}{\partial U} (\widetilde{m},\widetilde{U})[v]\nonumber\\
		&=\sum_{i=1}^N\int_0^T  \widetilde{U}_i\ v_i\ dt + \int_{\Omega_T} ( \widetilde{m}- m_d)\cdot  z\ dx\ dt+ \int_\Omega \left(\widetilde{m}(T)-m_\Omega\right)\cdot z(T)\ dx,\label{KKR}
	\end{flalign}
	where $z=G'(\widetilde{U})[v] \in \mathcal{M}$ \ is a unique regular solution of the linearized system \eqref{LS2}.

	Next, we consider $\vartheta =z_v$ in the weak adjoint formulation given in Definition \ref{AWSD} of system \eqref{AS} and proceed further using the equality $\int_0^T \langle \phi'(t),z_v(t)\rangle_{H^1(\Omega)^*\times H^1(\Omega)}\ dt= -\int_0^T \big(\phi(t),z_v'(t)\big)\ dt+\int_{\Omega}(\widetilde{m}(x,T)-m_{\Omega})\cdot z(x,T)\ dx $. 	On the other hand, we take the inner product of the linearized system \eqref{LS2} with $\phi$,
	combining both these resultants and then substituting in \eqref{KKR}, we deduce	
	\begin{equation*}
		\mathcal{I}'(\widetilde{U}) [v] = \sum_{i=1}^N\int_0^T  \widetilde{U}_i\ v_i\ dt    +\int_{\Omega_T} \Big(\phi \times \widetilde{m}+ \phi\Big)\cdot \zetaup(v)\ dx\ dt.
	\end{equation*}
	Since $\mathcal{I}'(\widetilde{U})[U-\widetilde{U}] \geq 0$, for all $U \in \mathbb{U}_{ad}$, therefore invoking $v=U-\widetilde{U}$ in the above equality, we arrive at the desired optimality condition \eqref{FOOC}.
\end{proof}

\section{Second Order Optimality Condition}\label{S-SOOC}

\subsection{Control-to-Costate Operator}\label{S-CTCO}

\begin{Lem}\label{LCCTC}
	The operator $\Phi:\mathbb{U}_R\to \mathcal{Z}$, which maps from the control space $\mathbb{U}_R$ to the costate space $\mathcal{Z}$, exhibits Lipschitz continuity, that is, there exists a positive constant $C_3$, which relies on $\Omega$, $T$, $R$, and $m_0$, and satisfies the following inequality
	\begin{equation}\label{LCI}
		\|\Phi(U_1)-\Phi(U_2)\|^2_{\mathcal{Z}} \leq C_3 \ \|U_1-U_2\|^2_{L^2(0,T;\mathbb{R}^N)} \ \ \ \ \ \ \forall \ U_1,U_2\in \mathbb{U}_R.
	\end{equation}
\end{Lem}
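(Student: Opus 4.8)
The plan is to realize $\widehat{\phi}:=\Phi(U_1)-\Phi(U_2)$ as the weak solution of the \emph{same} linear adjoint operator $\mathcal{E}_{U_1}$ that already appears in Lemma \ref{AL-SLS}, but with a perturbed right-hand side and a perturbed terminal datum, and then simply invoke the a priori estimate of that lemma. Write $m_k:=G(U_k)\in\mathcal{M}$, $\phi_k:=\Phi(U_k)\in\mathcal{Z}$ (so $\phi_k$ solves \eqref{CLAS} with $\widetilde m=m_k$, $\widetilde U=U_k$, $g=-(m_k-m_d)$ and $\phi_T=m_k(T)-m_\Omega$), and set $\widehat m:=m_1-m_2$, $\widehat U:=U_1-U_2$. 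Subtracting the two instances of \eqref{AS} and grouping so that $\mathcal{E}_{U_1}$ acts on $\widehat\phi$, one finds
\begin{equation*}
\mathcal{E}_{U_1}\widehat\phi=g,\qquad \frac{\partial\widehat\phi}{\partial\eta}=0\ \text{on}\ \partial\Omega_T,\qquad \widehat\phi(T)=\widehat m(T)\ \text{in}\ \Omega,
\end{equation*}
where, after collecting the terms produced by $(\mathcal{E}_{U_1}-\mathcal{E}_{U_2})\phi_2$,
\begin{equation*}
g=-\widehat m-\Delta(\phi_2\times\widehat m)-(\Delta\widehat m\times\phi_2)+\big(\phi_2\times\zetaup(\widehat U)\big)+\big((m_1+m_2)\cdot\widehat m\big)\phi_2+2(m_1\cdot\phi_2)\widehat m+2(\widehat m\cdot\phi_2)m_2 .
\end{equation*}
Since $U_1,U_2\in\mathbb{U}_R$ and (for $n=3$) $R$ is chosen inside the global solvability regime, $m_1,m_2$ are genuine regular solutions on $[0,T]$, so this identity is meaningful.

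Next I would apply Lemma \ref{AL-SLS} to the above system, obtaining
\begin{equation*}
\|\widehat\phi\|_{\mathcal{Z}}^2\ \leq\ \big(\|\widehat m(T)\|_{L^2(\Omega)}^2+\|g\|_{L^2(0,T;H^1(\Omega)^*)}^2\big)\,\mathcal{K}(m_1,U_1).
\end{equation*}
The key observation is that every ``coefficient'' quantity on the right is bounded uniformly over $\mathbb{U}_R$: estimate \eqref{SSEE2} together with \eqref{PEST} bounds $\|m_k\|_{\mathcal{M}}$ by a constant $C(\Omega,T,R,m_0)$, whence $\mathcal{K}(m_1,U_1)\leq C(\Omega,T,R,m_0)$, and Theorem \ref{T-AWS} (estimate \eqref{AEEE}) then bounds $\|\phi_k\|_{\mathcal{Z}}\leq C(\Omega,T,R,m_0,m_d,m_\Omega)$. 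The terminal term is handled by the embedding $\mathcal{M}\hookrightarrow C([0,T];H^2(\Omega))$ together with the Lipschitz estimate \eqref{LCCTSO} of Lemma \ref{L-LCCTS}: $\|\widehat m(T)\|_{L^2(\Omega)}\leq C\|\widehat m\|_{\mathcal{M}}\leq C\,C_2^{1/2}\|\widehat U\|_{L^2(0,T;\mathbb{R}^N)}$.

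The remaining — and technically central — step is to estimate $\|g\|_{L^2(0,T;H^1(\Omega)^*)}$ term by term using the negative-norm inequalities of Lemma \ref{L-CP}: the linear term $-\widehat m$ by $\|\widehat m\|_{L^2(0,T;L^2(\Omega))}\leq\|\widehat m\|_{\mathcal{M}}$; the second-order term $\Delta(\phi_2\times\widehat m)$ by \eqref{AEE-3} (with $\phi_2\in L^2(0,T;H^1(\Omega))$ and $\widehat m\in L^\infty(0,T;H^2(\Omega))$); the term $\Delta\widehat m\times\phi_2$ by \eqref{AEE-4}; the control term $\phi_2\times\zetaup(\widehat U)$ by \eqref{AEE-8} followed by \eqref{PEST}; and the three cubic terms by \eqref{AEE-9}--\eqref{AEE-10}, each time placing $\phi_2$ in the $L^2(0,T;L^2(\Omega))$ slot and $m_1,m_2,\widehat m$ in the $L^\infty(0,T;H^1(\Omega))$ slots. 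In every term the factor $\widehat m$ (or $\widehat U$) is converted into $\|\widehat U\|_{L^2(0,T;\mathbb{R}^N)}$ via \eqref{LCCTSO} and \eqref{PEST}, while the remaining factors are absorbed into the $R$-dependent constants above; hence $\|g\|_{L^2(0,T;H^1(\Omega)^*)}\leq C(\Omega,T,R,m_0)\|\widehat U\|_{L^2(0,T;\mathbb{R}^N)}$. Substituting back gives \eqref{LCI} with $C_3=C_3(\Omega,T,R,m_0)$, and in particular $\Phi$ is Lipschitz (hence continuous) from $\mathbb{U}_R$ into $\mathcal{Z}$.

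I expect the main obstacle to be precisely this last bookkeeping: each monomial of $g$ must be matched with a Lemma \ref{L-CP} estimate compatible with the weak regularity $\phi_2\in\mathcal{Z}=L^\infty(0,T;L^2(\Omega))\cap L^2(0,T;H^1(\Omega))$ with $\phi_{2,t}\in L^2(0,T;H^1(\Omega)^*)$ only (so $\phi_2$ has no pointwise normal trace, which matters for applying \eqref{AEE-3}), and one must verify that all the ``constants'' are genuinely uniform over $\mathbb{U}_R$ rather than merely over a single control. Once this is carried out, the statement follows mechanically from the linear theory of Lemma \ref{AL-SLS}.
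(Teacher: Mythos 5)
Your proposal is correct and follows essentially the route the paper intends: the paper only remarks that the proof is ``similar to Lemma \ref{L-LCCTS}'' (i.e., subtract the two adjoint systems and run the energy estimates), and your packaging of those estimates through the a priori bound of Lemma \ref{AL-SLS} with the perturbation terms measured in $L^2(0,T;H^1(\Omega)^*)$ via Lemma \ref{L-CP} is exactly how the paper itself handles the analogous difference system in the proof of Proposition \ref{P-CTC}. Your computation of the right-hand side $g$ and of the terminal datum $\widehat{\phi}(T)=\widehat{m}(T)$ is correct, and your caveats (uniformity of constants over $\mathbb{U}_R$, the purely formal normal-trace hypothesis in \eqref{AEE-3}, and the implicit dependence of the constant on $m_d,m_\Omega$ through $\|\phi_2\|_{\mathcal{Z}}$) are the same ones the paper silently accepts.
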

\noindent The proof of this lemma can be carried out in a manner similar to the proof of Lemma \ref{L-LCCTS}.

\begin{Pro}\label{P-CTC}
	Assume that $\bar{U}\in \mathbb{U}_R$ be the control and $m_{\bar{U}}\in \mathcal{M}$ be the corresponding regular solution. Then the following two conclusions hold:
	\begin{enumerate}[label=(\roman*)]
		\item The control-to-costate mapping $\Phi$ is Fr\'echet differentiable on $\mathbb{U}_R$, that is, for any $\bar{U}\in \mathbb{U}_R$, there exists a bounded linear operator $\Phi'(\bar{U}):L^2(0,T;\mathbb{R}^N) \to \mathcal{Z}$ such that
		$$\frac{\|\Phi(\bar{U}+U)-\Phi(\bar{U})-\Phi'(\bar{U})[U]\|_{\mathcal{Z}}}{\|U\|_{L^2(0,T;\mathbb{R}^N)}}\to 0 \ \ \ \ \ \text{as}  \ \|U\|_{L^2(0,T;\mathbb{R}^N)}\to 0,$$
		where $\phi':=\Phi'(\bar{U})[U]$ is the unique weak solution of the following system:
		\begin{equation}\label{ASD}
			\begin{cases}
				\mathcal{E}_{\bar{U}}\phi' = -\Delta (\phi_{\bar{U}}\times z)- (\Delta z\times \phi_{\bar{U}})+\big(\phi_{\bar{U}}\times \zetaup(U)\big) +2\ (m_{\bar{U}}\cdot z)\ \phi_{\bar{U}}\\
				\hspace{2.4cm}+2\ (z\cdot \phi_{\bar{U}})\ m_{\bar{U}} +2\ (m_{\bar{U}}\cdot \phi_{\bar{U}})\ z-z \ \ \ \ \text{in}\ \Omega_T,\vspace{0.1cm}\\
				\frac{\partial \phi'}{\partial \eta}=0 \ \ \ \text{in}\ \partial \Omega_T,\vspace{0.1cm}\\
				\phi'(T)=z(T)\ \ \ \text{in} \ \Omega.
			\end{cases}
		\end{equation}
		\item The Fr\'echet derivative $\Phi'$ is Lipschitz continuous, that is, for any controls $U_1,U_2 \in \mathbb{U}_R$ and $U\in L^2(0,T;\mathbb{R}^N)$, there exists a constant $C_4>0$ depending on $\Omega,T,R,m_0$ such that 
		\begin{equation}\label{LC-AD}
		\|\Phi'(U_1)[U]-\Phi'(U_2)[U] \|_{\mathcal{Z}} \leq C_4\  \|U_1-U_2\|_{L^2(0,T;\mathbb{R}^N)}\  \|U\|_{L^2(0,T;\mathbb{R}^N)}.	
		\end{equation}
		\end{enumerate}
\end{Pro}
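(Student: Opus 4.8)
The plan is to mirror the structure of the proof of Proposition \ref{P-CTS}, now working at the level of the adjoint (costate) equation rather than the state equation, and to lean on the linear solvability result Lemma \ref{AL-SLS} in the same way the state-side argument leaned on Lemma \ref{L-SLS}. First I would check that the right-hand side of \eqref{ASD} indeed lies in $L^2(0,T;H^1(\Omega)^*)$ so that Lemma \ref{AL-SLS} applies and produces a unique $\phi'\in\mathcal Z$; here $z=G'(\bar U)[U]\in\mathcal M$ by Proposition \ref{P-CTS}(i) and $\phi_{\bar U}\in\mathcal Z$ is the adjoint state, and each term — $\Delta(\phi_{\bar U}\times z)$, $\Delta z\times\phi_{\bar U}$, $\phi_{\bar U}\times\zetaup(U)$, and the three trilinear terms — can be bounded in the $L^2(0,T;H^1(\Omega)^*)$ norm via the dual estimates of Lemma \ref{L-CP} (items \eqref{AEE-3}, \eqref{AEE-4}, \eqref{AEE-8}, \eqref{AEE-9}, \eqref{AEE-10}) together with the regularity $z\in\mathcal M$, $\phi_{\bar U}\in\mathcal Z$. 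This also shows $\|\phi'\|_{\mathcal Z}\le C\|U\|_{L^2(0,T;\mathbb R^N)}$, i.e. $\Phi'(\bar U)$ is a bounded linear operator.

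For Fréchet differentiability (part (i)), set $\psi:=\Phi(\bar U+U)-\Phi(\bar U)-\phi'$, where $\phi_{\bar U+U}=\Phi(\bar U+U)$ solves the adjoint system at control $\bar U+U$ with state $m_{\bar U+U}$. Subtracting the three adjoint equations and the defining equation \eqref{ASD}, and writing $\shat m:=m_{\bar U+U}-m_{\bar U}$, $\shat\phi:=\phi_{\bar U+U}-\phi_{\bar U}$, $w:=m_{\bar U+U}-m_{\bar U}-z$ (the state-side remainder, already controlled by \eqref{X1}), one finds that $\psi$ satisfies $\mathcal E_{\bar U}\psi=R$ with $\psi(T)=w(T)$, where $R$ collects all the quadratic-and-higher cross terms: terms built from $\shat m$ paired with $\shat\phi$, terms with $w$ paired with $\phi_{\bar U}$ or $m_{\bar U}$, and terms like $\Delta(\phi_{\bar U}\times w)$, $\shat\phi\times\zetaup(U)$, etc. Using Lemma \ref{AL-SLS} for $\psi$, it remains to show $\|R\|_{L^2(0,T;H^1(\Omega)^*)}=o(\|U\|_{L^2(0,T;\mathbb R^N)})$. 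Each term of $R$ is estimated by Lemma \ref{L-CP}, using the Lipschitz bounds $\|\shat m\|_{\mathcal M}\le C\|U\|$, $\|\shat\phi\|_{\mathcal Z}\le C\|U\|$ from Lemma \ref{L-LCCTS} and Lemma \ref{LCCTC}, the bound $\|z\|_{\mathcal M}\le C\|U\|$, and the remainder estimate $\|w\|_{\mathcal M}\le C(\|U\|^2+\|U\|^3)$ from \eqref{X1}. This yields $\|\psi\|_{\mathcal Z}\le C(\|U\|^2+\|U\|^3)$, establishing (i) and the identity $\phi'=\Phi'(\bar U)[U]$.

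For the Lipschitz continuity of $\Phi'$ (part (ii)), fix $U_1,U_2\in\mathbb U_R$ and $U\in L^2(0,T;\mathbb R^N)$, write $\phi'_{U_k}:=\Phi'(U_k)[U]$, and set $\shat{\phi'}:=\phi'_{U_1}-\phi'_{U_2}$. From \eqref{ASD} for $U_1$ and $U_2$ one gets $\mathcal E_{U_1}\shat{\phi'}=\sum_k\Xi_k$ with $\shat{\phi'}(T)=\shat z(T)$, where the $\Xi_k$ are differences of the data terms in \eqref{ASD}: differences caused by $\phi_{U_1}-\phi_{U_2}$, by $z_{U_1}-z_{U_2}$, by $m_{U_1}-m_{U_2}$, together with the discrepancy $(\mathcal E_{U_1}-\mathcal E_{U_2})\phi'_{U_2}$ moved to the right-hand side. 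Apply Lemma \ref{AL-SLS} to $\shat{\phi'}$; then estimate each $\Xi_k$ in $L^2(0,T;H^1(\Omega)^*)$ via Lemma \ref{L-CP}, invoking: the Lipschitz continuity of $G$ (Lemma \ref{L-LCCTS}), of $\Phi$ (Lemma \ref{LCCTC}), and of $G'$ (Proposition \ref{P-CTS}(ii)) to bound $\|z_{U_1}-z_{U_2}\|_{\mathcal M}\le C\|U_1-U_2\|\,\|U\|$, plus the uniform bounds $\|m_{U_k}\|_{\mathcal M},\|\phi_{U_k}\|_{\mathcal Z}\le C(\Omega,T,R,m_0)$ and $\|z_{U_k}\|_{\mathcal M},\|\phi'_{U_k}\|_{\mathcal Z}\le C\|U\|$, and boundedness of $\zetaup$ from \eqref{PEST}. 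Collecting these gives $\|\shat{\phi'}\|_{\mathcal Z}^2\le C(\Omega,T,R,m_0)\,\|U_1-U_2\|^2_{L^2(0,T;\mathbb R^N)}\,\|U\|^2_{L^2(0,T;\mathbb R^N)}$, which is \eqref{LC-AD}.

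The main obstacle is bookkeeping rather than a single hard estimate: one must carefully derive the residual equations for $\psi$ and $\shat{\phi'}$ — in particular getting the $\Delta(\cdot\times\cdot)$ terms and the three trilinear terms in \eqref{ASD} to cancel correctly against the adjoint operator $\mathcal E$ — and then route every resulting term through the correct dual estimate in Lemma \ref{L-CP}, matching the available regularity ($z,w\in\mathcal M$ vs. $\phi,\shat\phi\in\mathcal Z$) to the hypotheses of each inequality. Keeping track of which factor carries the $\|U_1-U_2\|$ gain and which carries only $\|U\|$ is the delicate point in part (ii).
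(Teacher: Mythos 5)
Your proposal follows essentially the same route as the paper: the paper also obtains $\phi'$ from Lemma \ref{AL-SLS}, forms the remainder $\xi=\phi_{\bar U+U}-\phi_{\bar U}-\phi'$ satisfying $\mathcal E_{\bar U}\xi=\sum_k\Psi_k$ with $\xi(T)=w(T)$, estimates each $\Psi_k$ in $L^2(0,T;H^1(\Omega)^*)$ via Lemma \ref{L-CP} together with the Lipschitz bounds \eqref{LCCTSO}, \eqref{LCI} and the state remainder estimate \eqref{X1}, and treats part (ii) by the analogous residual system $\mathcal E_{U_1}\shat\psi=\sum_k\mathcal F_k$ with $\shat\psi(T)=z_{U_1}(T)-z_{U_2}(T)$, invoking the Lipschitz continuity of $G$, $\Phi$ and $G'$. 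The decomposition, the key lemmas, and the order-counting in $\|U\|$ all match the paper's argument.
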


\begin{proof}
		Suppose $\bar{U}\in \mathbb{U}_R$ is a fixed control, and $m_{\bar{U}}$ be the corresponding regular solution. Let $z=G'(\bar{U})[U]$ represent the unique regular solution of the linearized system \eqref{LS2}, and $\phi_{\bar{U}}$ denote the unique weak solution of the adjoint system \eqref{AS}. Building upon the conclusions of Lemma \ref{AL-SLS}, we can readily establish that the system \eqref{ASD} admits a unique weak solution denoted by $\phi'$. Just for clarification, $\phi'$ is currently used solely as a notation and does not denote the derivative of the control-to-costate operator at this point. 
		
	Now, let us take $\shat{\phi}=\phi_{\bar{U}+U}-\phi_{\bar{U}}$, $\shat{m}=m_{\bar{U}+U}-m_{\bar{U}}$ and $w=m_{\bar{U}+U}-m_{\bar{U}}-z$. Consequently, the variable $\xi:=\phi_{\bar{U}+U}-\phi_{\bar{U}}-\phi'$ constitutes a solution for the following system adhering to the weak solvability outlined in Definition \ref{AWSD}: 
	\begin{equation}\label{ALO}
		\begin{cases}
			\displaystyle \mathcal{E}_{\bar{U}}\xi = \sum_{k=1}^{8} \Psi_k\ \ \ \ \text{in}\ \Omega_T,\\
			\frac{\partial \xi}{\partial \eta}=0 \ \ \ \ \text{on} \ \partial \Omega_T,  \ \ \ \ \xi(T)=m_{\bar{u}+u}(T)-m_{\bar{u}}(T)-z(T) \ \ \text{in} \ \Omega,	
		\end{cases}
	\end{equation}
	where $\mathcal{E}_{\bar{U}}$ is defined in \eqref{CLAS},  and the terms $\Psi_k$ are given by
	\begin{flalign*}
		\Psi_{1}&= -\Delta (\shat{\phi}\times \shat{m}) -\Delta (\phi_{\bar{U}}\times w) ,\hspace{1.7cm}
		\Psi_{2}= - (\Delta \shat{m}\times \shat{\phi})-(\Delta w\times \phi_{\bar{U}}),\\ 
		\Psi_{3}&= \shat{\phi}\times \zetaup(U)-w,\hspace{3.5cm} 
		\Psi_{4}= \big(\shat{m}\cdot \shat{m}\big)\ \phi_{\bar{U}+U} +2\ \big(\shat{m}\cdot m_{\bar{U}}\big)\ \shat{\phi} +2\ \big(m_{\bar{U}}\cdot w\big)\ \phi_{\bar{U}},\\
		\Psi_{5}&= 2\ \big(\shat {m}\cdot \shat{\phi}\big)\ m_{\bar{U}+U}+2 \ \big(\shat{m}\cdot \phi_{\bar{U}}\big)\ \shat{m}+ 2\  \big(w\cdot \phi_{\bar{U}}\big)\ m_{\bar{U}}+2 \ \big(m_{\bar{U}}\cdot \shat{\phi}\big)\ \shat{m} +2\ \big(m_{\bar{U}}\cdot \phi_{\bar{U}}\big)\ w.
	\end{flalign*}

Utilizing Lemma \ref{AL-SLS}, we can establish an energy estimate concerning the solution of system \eqref{ALO}, that is, any function $\xi\in \mathcal{Z}$ that satisfies system \eqref{ALO} in the sense of Definition \ref{AWSD}, satisfies the following estimate:
\begin{equation}\label{DAO-E1}
	\mathcal{D}(\xi_{\infty},\xi_2,{\xi_t}_2) \leq \left(\|m_{\bar{U}+U}(T)-m_{\bar{U}}(T)-z(T)\|^2_{L^2(\Omega)}+ \sum_{k=1}^{5}  \|\Psi_k\|^2_{L^2(0,T;H^1(\Omega)^*)}\right) \mathcal{K}(m_{\bar{U}},\bar{U}).	
\end{equation}
Now, let's proceed to estimate the terms involving $\Psi_k$'s on the right-hand side. By employing estimates \eqref{AEE-3} and \eqref{AEE-4} for the $L^2(0,T;H^1(\Omega)^*)$ norm of $\Psi_1$ and $\Psi_2$ respectively, we deduce
\begin{flalign*}
	\|\Psi_1\|^2_{L^2(0,T;H^1(\Omega)^*)}+ 	\|\Psi_2\|^2_{L^2(0,T;H^1(\Omega)^*)} &\leq C\  \|\shat{m}\|^2_{L^\infty(0,T;H^2(\Omega))}\|\shat{\phi}\|^2_{L^2(0,T;H^1(\Omega))}+C \ \|w\|^2_{L^\infty(0,T;H^2(\Omega))} \|\phi_{\bar{U}}\|^2_{L^2(0,T;H^1(\Omega))}.
	\end{flalign*}
Similarly, employing estimates \eqref{AEE-8} and \eqref{AEE-9} to evaluate the $L^2(0,T;H^1(\Omega)^*)$ norm of $\Psi_3$ and $\Psi_4$ respectively, we obtain
	\begin{flalign*}
		&\|\Psi_3\|^2_{L^2(0,T;H^1(\Omega)^*)} \leq C\ \|\shat{\phi}\|^2_{L^\infty(0,T;L^2(\Omega))} \|\zetaup(U)\|^2_{L^2(0,T;H^1(\Omega))} + C\ \|w\|^2_{L^2(0,T;L^2(\Omega))}, \\
		&\|\Psi_4\|^2_{L^2(0,T;H^1(\Omega)^*)} \leq C\ \|\shat{m}\|^4_{L^\infty(0,T;H^1(\Omega))}\   \|\phi_{\bar{U}+U}\|^2_{L^2(0,T;L^2(\Omega))}\\
		&\hspace{2cm} + C\ \left(\|\shat{m}\|^2_{L^\infty(0,T;H^1(\Omega))}\ \|\shat{\phi}\|^2_{L^2(0,T;L^2(\Omega))} + \|w\|^2_{L^\infty(0,T;H^1(\Omega))} \|\phi_{\bar{U}}\|^2_{L^2(0,T;L^2(\Omega))}\right)\  \|m_{\bar{U}}\|^2_{L^\infty(0,T;H^1(\Omega))}.
	\end{flalign*}
	Lastly, estimating $\|\Psi_5\|_{L^2(0,T;H^1(\Omega)^*)}$ using \eqref{AEE-10}, we find
	\begin{flalign*}
		&\|\Psi_5\|^2_{L^2(0,T;H^1(\Omega)^*)} \leq C\ \|\shat{m}\|^2_{L^\infty(0,T;H^1(\Omega))}\  \|\shat{\phi}\|^2_{L^2(0,T;L^2(\Omega))}\ \|m_{\bar{U}+U}\|^2_{L^\infty(0,T;H^1(\Omega))}&\\
		&\hspace{2cm}+C\ \left( \|\shat{m}\|^2_{L^\infty(0,T;H^1(\Omega))} + \|m_{\bar{U}}\|^2_{L^\infty(0,T;H^1(\Omega))}\right) \|\shat{m}\|^2_{L^\infty(0,T;H^1(\Omega))}\  \|\shat{\phi}\|^2_{L^2(0,T;L^2(\Omega))}\\
		&\hspace{2cm}+C\ \|w\|^2_{L^\infty(0,T;H^1(\Omega))}\ \|m_{\bar{U}}\|^2_{L^\infty(0,T;H^1(\Omega))}\ \|\phi_{\bar{U}}\|^2_{L^2(0,T;L^2(\Omega))}.
	\end{flalign*}

	Next, we will combine all the aforementioned estimates, substitute them into \eqref{DAO-E1}, and divide by $\|U\|^2_{L^2(0,T;\mathbb{R}^N)}$. As $\mathbb{U}_R$ is an open subset of $L^2(0,T;\mathbb{R}^N)$, there exists a $\rho>0$ such that for any control $U\in L^2(0,T;\mathbb{R}^N)$ with $\|U\|_{L^2(0,T;\mathbb{R}^N)}< \rho$, we have $\bar{U}+U \in \mathbb{U}_R$. By virtue of estimate \eqref{SSEE}, we can establish a uniform bound for $\|m_{\bar{U}}\|_{\mathcal{M}}$ and $\|m_{\bar{U}+U}\|_{\mathcal{M}}$ for such small controls. Accordingly, by employing estimate \eqref{AEEE} we can find a bound for $\|\phi_{\bar{U}}\|_{\mathcal{Z}}$ and $\|\phi_{\bar{U}+U}\|_{\mathcal{Z}}$. 
	
	From the estimates \eqref{LCCTSO} and \eqref{LCI}, it is evident that $\|\shat{m}\|^2_{\mathcal{M}}\leq C\ \|U\|^2_{L^2(0,T;\mathbb{R}^N)}$ and $\|\shat{\phi}\|^2_{\mathcal{Z}}\leq C\ \|U\|^2_{L^2(0,T;\mathbb{R}^N)}$. Additionally, from estimate \eqref{X1}, we can observe that $\|w\|^2_{\mathcal{M}}\leq C\ \|U\|^4_{L^2(0,T;\mathbb{R}^N)}  + C\  \|U\|^6_{L^2(0,T;\mathbb{R}^N)}$. Therefore, by utilizing all these observations, we can conclude that  $\frac{\|\Psi_k\|^2_{L^2(0,T;H^1(\Omega)^*)}}{\|U\|^2_{L^2(0,T;\mathbb{R}^N)}}\to 0$ as $\|U\|_{L^2(0,T;\mathbb{R}^N)}\to 0$ for $k=1,2,...,5$. Moreover,
	\begin{align*}
	&\frac{\|m_{\bar{U}+U}(T)-m_{\bar{U}}(T)-z(T)\|^2_{L^2(\Omega)}}{\|U\|^2_{L^2(0,T;\mathbb{R}^N)}}\leq  \frac{\|m_{\bar{U}+U}-m_{\bar{U}}-z\|^2_{L^\infty(0,T;L^2(\Omega))}}{\|U\|^2_{L^2(0,T;\mathbb{R}^N)}}\\
	&\hspace{1.1cm} \leq \frac{\|m_{\bar{U}+U}-m_{\bar{U}}-z\|^2_{\mathcal{M}}}{\|U\|^2_{L^2(0,T;\mathbb{R}^N)}} = \frac{\|w\|^2_{\mathcal{M}}}{\|U\|^2_{L^2(0,T;\mathbb{R}^N)}} \leq C\ \frac{\|U\|^4_{L^2(0,T;\mathbb{R}^N)}  +   \|U\|^6_{L^2(0,T;\mathbb{R}^N)}}{\|U\|^2_{L^2(0,T;\mathbb{R}^N)}}\to 0 .		
	\end{align*}
Hence, by dividing both sides of estimate \eqref{DAO-E1} by $\|U\|^2_{L^2(0,T;\mathbb{R}^N)}$, we deduce that   $\frac{\|\xi\|^2_{\mathcal{Z}}}{\|U\|^2_{L^2(0,T;\mathbb{R}^N)}}\to 0$ as $\|U\|_{L^2(0,T;\mathbb{R}^N)}\to 0$ for $k=1,2,...,5$. This implies that the control-to-costate operator is Fr\`echet differentiable. 

Let $\phi'_{U_1}:= \Phi'(U_1)[U]$ and $\phi'_{U_2}:=\Phi'(U_2)[U]$ represent two distinct weak solutions of system \eqref{ASD}. Introducing the notation $\shat{\psi}=\phi'_{U_1}-\phi'_{U_2}$, $\shat{m}=m_{U_1}-m_{U_2}$ and $\shat{U}=U_1-U_2$, we observe that $\shat{\psi}$ will satisfy the following system weakly in the sense of Definition \ref{AWSD}:
	\begin{equation}\label{DOAO}
	\begin{cases}
		\displaystyle \mathcal{E}_{U_1} \shat{\psi} = \sum_{k=1}^{5} \mathcal{F}_k\ \ \ \ \text{in}\ \Omega_T,\\
		\frac{\partial \shat{\psi}}{\partial \eta}=0 \ \ \ \ \text{on} \ \partial \Omega_T,  \ \ \ \ \shat{\psi}(T)=z_{U_1}(T)-z_{U_2}(T) \ \ \text{in} \ \Omega,	
	\end{cases}
\end{equation}
where $\mathcal{E}_{U_1}$ is defined in \eqref{CLAS},  and the terms $\mathcal{F}_k$ are given by
\begin{flalign*}
\mathcal{F}_1&=-\Delta (\phi'_{U_2}\times \shat{m})  -\Delta (\shat{\phi}\times z_{U_1})-\Delta (\phi_{U_2}\times \shat{z}),\\
\mathcal{F}_2&=-\Delta \shat{m}\times \phi'_{U_2} -(\Delta \shat{z} \times \phi_{U_1}) -(\Delta z_{U_2}\times \shat{\phi}),\\
\mathcal{F}_3&=\phi'_{U_2}\times \zetaup(\shat{U})+ \shat{\phi}\times \zetaup(U)-\shat{z},\\
\mathcal{F}_4&= \big(\shat{m}\cdot\big(m_{U_1}+m_{U_2}\big)\big)\  \phi_{U_2}' +2 \ \big(\shat{m}\cdot z_{U_1}\big)\ \phi_{U_1}+ 2\ \big(m_{U_2}\cdot \shat{z}\big)\ \phi_{U_1}+2\ \big(m_{U_2}\cdot z_{U_2}\big)\ \shat{\phi} , \\
\mathcal{F}_5&=2\ \big(\shat{m}\cdot \phi_{U_2}'\big)\ m_{U_1} +2\ \big(m_{U_2}\cdot \phi_{U_2}'\big)\ \shat{m}+2\ \big(\shat{z}\cdot \phi_{U_1}\big)\ m_{U_1}+2\ \big(z_{U_2}\cdot \shat{\phi}\big)\ m_{U_1}\\
&\hspace{1cm} +2\ \big(z_{U_2}\cdot \phi_{U_2}\big)\ \shat{m} +2\ \big(\shat{m}\cdot \phi_{U_1}\big)\ z_{U_1} +2\ \big(m_{U_2} \cdot \shat{\phi}\big)\ z_{U_1} +2\ \big(m_{U_2} \cdot \phi_{U_2}\big)\ \shat{z}.
\end{flalign*}
Once more, utilizing Lemma \ref{AL-SLS}, we can derive an estimate for the solution of the system \eqref{DOAO} as follows:
\begin{equation}\label{DAO-E2}
	\mathcal{D}(\shat{\psi}_{\infty},\shat{\psi}_2,{\shat{\psi}_t}_2) \leq \left(\|z_{U_1}(T)-z_{U_2}(T) \|^2_{L^2(\Omega)}+ \sum_{k=1}^{5}  \|\mathcal{F}_k\|^2_{L^2(0,T;H^1(\Omega)^*)}\right) \ 	\mathcal{K}(m_{U_1},U_1).
\end{equation}
Performing calculations analogous to Part-(i), we can establish the following inequality:
$$\sum_{k=1}^5\|\mathcal{F}_k\|^2_{L^2(0,T;H^1(\Omega)^*)} \leq C\ \|U_1-U_2\|^2_{L^2(0,T;\mathbb{R}^N)}\  \|U\|^2_{L^2(0,T;\mathbb{R}^N)}.$$
Additionally, for the final time data, using the inequality \eqref{LC-CTS}, we obtain the estimate $\|z_{U_1}(T)-z_{U_2}(T)\|^2_{L^2(\Omega)} \leq \|z_{U_1}-z_{U_2}\|^2_{L^\infty(0,T;L^2(\Omega))} \leq C\ \|U_1-U_2\|^2_{L^2(0,T;\mathbb{R}^N)}\ \|U\|^2_{L^2(0,T;\mathbb{R}^N)}$. By substituting these estimates into \eqref{DAO-E2}, we arrive at the desired result \eqref{LC-AD}. This concludes the proof.
\end{proof}

\begin{Cor}\label{C-CTCD}
	The Fr\'echet derivative of the control-to-costate operator satisfies the following estimate:
	\begin{equation}\label{FDCTC-S}
		\|\Phi'(U)[V]\|_{\mathcal{Z}} \leq C(\Omega,T,U,m_U)\ \|V\|_{L^2(0,T;\mathbb{R}^N)}.
	\end{equation}
\end{Cor}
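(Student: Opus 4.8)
The plan is to observe that $\Phi'(U)[V]$ is, by definition (see Proposition \ref{P-CTC}(i)), the unique weak solution $\phi'$ of the linear adjoint-type system \eqref{ASD} with right-hand side
\[
g := -\Delta (\phi_{U}\times z)-(\Delta z\times \phi_{U})+(\phi_{U}\times \zetaup(V))+2(m_{U}\cdot z)\phi_{U}+2(z\cdot \phi_{U})m_{U}+2(m_{U}\cdot \phi_{U})z-z,
\]
where $z=G'(U)[V]$ and $\phi_{U}$ is the weak solution of the adjoint system \eqref{AS}, and with final datum $\phi'(T)=z(T)$. Thus the estimate \eqref{FDCTC-S} follows directly from the a priori bound of Lemma \ref{AL-SLS} applied to \eqref{ASD}, provided we control $\|g\|_{L^2(0,T;H^1(\Omega)^*)}$ and $\|z(T)\|_{L^2(\Omega)}$ by $C\,\|V\|_{L^2(0,T;\mathbb{R}^N)}$ (with $C$ absorbing the fixed quantities $\|m_U\|_{\mathcal{M}}$ and $\|\phi_U\|_{\mathcal{Z}}$, which are themselves bounded in terms of $\Omega,T,U,m_U$ via \eqref{SSEE2} and \eqref{AEEE}).

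First I would bound each term of $g$ in the $L^2(0,T;H^1(\Omega)^*)$ norm using the estimates collected in Lemma \ref{L-CP}: for the first two terms use \eqref{AEE-3} and \eqref{AEE-4} to get
\[
\|\Delta(\phi_U\times z)\|_{L^2(0,T;H^1(\Omega)^*)}+\|\Delta z\times\phi_U\|_{L^2(0,T;H^1(\Omega)^*)}\le C\,\|\phi_U\|_{\mathcal{Z}}\,\|z\|_{\mathcal{M}},
\]
for $\phi_U\times\zetaup(V)$ use \eqref{AEE-8} together with the operator bound \eqref{PEST}, for the trilinear terms use \eqref{AEE-9}–\eqref{AEE-10}, and for the last term simply $\|z\|_{L^2(0,T;H^1(\Omega)^*)}\le C\|z\|_{\mathcal{M}}$. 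Each bound is linear in $\|z\|_{\mathcal{M}}$ (and, for one term, linear in $\|\zetaup(V)\|_{L^2(0,T;H^1(\Omega))}$), with a prefactor depending only on the fixed norms $\|m_U\|_{\mathcal{M}}$, $\|\phi_U\|_{\mathcal{Z}}$. For the final datum, $\|z(T)\|_{L^2(\Omega)}\le\|z\|_{L^\infty(0,T;L^2(\Omega))}\le\|z\|_{\mathcal{M}}$. Then I would invoke Corollary \ref{C-CTSD} (i.e.\ \eqref{FDCTS-S}), which gives $\|z\|_{\mathcal{M}}=\|G'(U)[V]\|_{\mathcal{M}}\le C(\Omega,T,U,m_0)\|V\|_{L^2(0,T;\mathbb{R}^N)}$, and also \eqref{PEST} for $\|\zetaup(V)\|_{L^2(0,T;H^1(\Omega))}\le C\|V\|_{L^2(0,T;\mathbb{R}^N)}$. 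Combining, $\|g\|_{L^2(0,T;H^1(\Omega)^*)}^2+\|z(T)\|_{L^2(\Omega)}^2\le C\,\|V\|^2_{L^2(0,T;\mathbb{R}^N)}$.

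Finally, feeding these two bounds into Lemma \ref{AL-SLS} yields
\[
\|\Phi'(U)[V]\|_{\mathcal{Z}}^2=\|\phi'\|_{\mathcal{Z}}^2\le \bigl(\|z(T)\|^2_{L^2(\Omega)}+\|g\|^2_{L^2(0,T;H^1(\Omega)^*)}\bigr)\,\mathcal{K}(m_U,U)\le C(\Omega,T,U,m_U)\,\|V\|^2_{L^2(0,T;\mathbb{R}^N)},
\]
which is \eqref{FDCTC-S} after taking square roots. There is no serious obstacle here; the only point requiring a little care is bookkeeping the dependence of constants — making sure that $\mathcal{K}(m_U,U)$ and the prefactors from Lemma \ref{L-CP} are all finite and controlled by $\Omega,T,U,m_U$ (which follows since $U\in\mathbb{U}_R$ forces a uniform bound on $\|m_U\|_{\mathcal{M}}$ and $\|\phi_U\|_{\mathcal{Z}}$), and checking that the one term carrying $\zetaup(V)$ rather than $z$ is handled by \eqref{PEST}. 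As the author notes, this is an immediate consequence of Lemma \ref{AL-SLS} specialized to \eqref{ASD} with $\phi'(T)=z(T)$ and $f$ equal to the right-hand side of \eqref{ASD}.
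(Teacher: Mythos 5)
Your proposal is correct and follows exactly the route the paper intends: the corollary is treated as an immediate consequence of Lemma \ref{AL-SLS} applied to the system \eqref{ASD} (with final datum $z(T)$ and the stated right-hand side), with the source terms bounded via Lemma \ref{L-CP}, \eqref{PEST} and \eqref{FDCTS-S} — precisely parallel to how Corollary \ref{C-CTSD} is deduced from Lemma \ref{L-SLS}. Your bookkeeping of the constants and of the one term carrying $\zetaup(V)$ matches the paper's own usage of these estimates in the proof of Proposition \ref{P-CTC}.
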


This estimate plays a fundamental role in understanding the sensitivity of the costate with respect to changes in the control, which is crucial in optimization and control problems involving partial differential equations.

\subsection{Second Order Optimality Condition}\label{SS-SOOC}
Since the state equation is in non-linear form, the associated control problem exhibits non-convex characteristics, potentially leading to multiple solutions based on the necessary first-order conditions. To gain deeper insights, the analysis can be extended using second-order conditions.

\begin{proof}[Proof of Theorem \ref{T-SOLO}]
	We'll establish the theorem by employing a proof by contradiction argument as in Theorem 7.5, \cite{ECFT}. Let's assume that $\widetilde{U}$ doesn't satisfy the growth condition as specified by inequality \eqref{QGC}. This implies the existence of a sequence of control functions $\{U_k\}^{\infty}_{k=1}$ in $\mathbb{U}_{ad}$ such that $U_k \to \widetilde{U}$ in $L^2(0,T;\mathbb{R}^N)$, and the following inequality holds:
	\begin{equation}\label{CQGC}
		\mathcal{J}(U_k) < \mathcal{J}(\widetilde{U}) + \frac{1}{k}\ \|U_k-\widetilde{U}\|^2_{L^2(0,T;\mathbb{R}^N)}\ \ \ \ \forall \ k.
	\end{equation}
	Let us introduce $\rho_k=\|U_k-\widetilde{U}\|_{L^2(0,T;\mathbb{R}^N)}$ and $h_k=\frac{1}{\rho_k}\ (U_k-\widetilde{U})$. Since $\|h_k\|_{L^2(0,T;\mathbb{R}^N)}=1$, we are able to extract a sub-sequence (again denoted as $\{h_k\}$), such that $h_k \rightharpoonup h$ weakly in $L^2(0,T;\mathbb{R}^N)$.
	
	\noindent	\underline{\textit{Step-I}} : $\frac{\partial \mathcal{J}}{\partial U}(\widetilde{U})[h]=0$
	
	Applying the mean value theorem, we obtain the equality $\displaystyle 
		\mathcal{J}(U_k)=\mathcal{J}(\widetilde{U}) + \rho_k\frac{\partial \mathcal{J}}{\partial U}(v_k)[h_k]$, where $v_k$ is a point between $\widetilde{U}$ and $U_k$. Substituting this equality in estimate \eqref{CQGC}, we find 
	\begin{equation}\label{SO-E1}
		\frac{\partial \mathcal{J}}{\partial U}(v_k)[h_k] < \frac{1}{k\rho_k}\|U_k-\widetilde{U}\|^2_{L^2(0,T;\mathbb{R}^N)}=\frac{1}{k} \|U_k-\widetilde{U}\|_{L^2(0,T;\mathbb{R}^N)}.	
	\end{equation}
	The Gateaux derivative of the functional $\mathcal{J}$ at point $v_k$ in the direction of $h_k$ is expressed as
	\begin{equation}\label{FOV}
		\frac{\partial \mathcal{J}}{\partial U}(v_k)[h_k]=   \int_0^T v_k \cdot h_k\ dt+  \int_{\Omega_T} \phi_{v_k}\cdot \zetaup(h_k)\ dx\ dt +  \int_{\Omega_T} \big(\phi_{v_k}\times m_{v_k}\big) \cdot \zetaup(h_k)\ dx\ dt. 
	\end{equation}
	As $v_k$ lies between $U_k$ and $\widetilde{U}$, and $U_k$ converges strongly to $\widetilde{U}$ in $L^2(0,T;\mathbb{R}^N)$, it follows that $v_k$ converges strongly to $\widetilde{U}$ strongly in $L^2(0,T;\mathbb{R}^N)$. Now, based on estimates \eqref{LCCTSO} and \eqref{LCI}, it becomes apparent that $m_{v_k}$ converges strongly to $\widetilde{m}$ in $\mathcal{M}$, and $\phi_{v_k}$ converges strongly to  $\phi_{\widetilde{U}}$ in $\mathcal{Z}$. Furthermore, the weak convergence of $h_k$ to $h$ in $L^2(0,T;\mathbb{R}^N)$ implies that $\zetaup(h_k)\rightharpoonup\zetaup(h)$ weakly in $L^2(0,T;L^2(\Omega))$. Thus the following convergences hold: 
	\begin{enumerate}[label=(\roman*)]
		\item $\displaystyle \int_0^T v_k \cdot h_k\ dt\to \int_0^T \widetilde{U}\cdot h\ dt,$
		\item $\displaystyle \int_{\Omega_T} \phi_{v_k}\cdot \zetaup(h_k)\ dx\ dt \to \int_{\Omega_T} \phi_{\widetilde{U}} \cdot \zetaup(h)\ dx\ dt$,
		\item $\displaystyle \int_{\Omega_T} (\phi_{v_k}\times m_{v_k})\cdot \zetaup(h_k)\ dx\ dt \to \int_{\Omega_T} (\phi_{\widetilde{U}}\times \widetilde{m}) \cdot \zetaup(h)\ dx\ dt$.
	\end{enumerate}
	We will focus solely on demonstrating the third convergence. Similar approaches can be applied to establish the first and second convergences. Beginning with the application of H\"older's inequality and the embedding $H^1(\Omega)\hookrightarrow L^4(\Omega)$, we derive 
	\begin{align*}
		\bigg| &\int_{\Omega_T} \big(\phi_{v_k}\times m_{v_k}\big)\cdot \zetaup(h_k)\ dx\ dt - \int_{\Omega_T} \big(\phi_{\widetilde{U}}\times \widetilde{m}\big)\cdot \zetaup(h)\ dx\ dt\ \bigg|&\\
		&\leq \int_{\Omega_T} \big| \phi_{v_k}\times m_{v_k} - \phi_{\widetilde{U}}\times \widetilde{m}\big|\ \big|\zetaup(h_k)\big|\ dx\ dt +\Big|\int_{\Omega_T} \big(\phi_{\widetilde{U}}\times \widetilde{m}\big) \cdot \big(\zetaup(h_k)-\zetaup(h)\big)\ dx\ dt\Big| \\
		&\leq \left(\|\phi_{v_k}-\phi_{\widetilde{U}}\|_{L^2(0,T;L^2(\Omega))}\ \|m_{v_k}\|_{L^\infty(0,T;H^1(\Omega))}+\|\phi_{\widetilde{U}}\|_{L^2(0,T;H^1(\Omega))}\ \|m_{v_k}-\widetilde{m}\|_{L^\infty(0,T;L^2(\Omega))} \right)\ \|\zetaup(h_k)\|_{L^2(0,T;H^1(\Omega))}\\
		&\hspace{1.5cm}+\Big|\int_{\Omega_T} \big(\phi_{\widetilde{U}}\times \widetilde{m}\big) \cdot \big(\zetaup(h_k)-\zetaup(h)\big)\ dx\ dt\ \Big|\ \to 0 \ \text{as}\ k \to \infty.
	\end{align*}
	Consequently, from \eqref{SO-E1}, we can deduce that $\displaystyle \frac{\partial \mathcal{J}}{\partial U}(\widetilde{U})[h] = \lim_{k\to \infty} 	\frac{\partial \mathcal{J}}{\partial U}(v_k)[h_k] \leq 0$. Moreover, from the first-order variational inequality \eqref{FOOC}, we know that $\displaystyle \frac{\partial \mathcal{J}}{\partial U}(\widetilde{U})[h_k]=\frac{1}{\rho_k}\frac{\partial \mathcal{J}}{\partial U}(\widetilde{U})[U_k-\widetilde{U}] \geq 0$. Hence, leveraging the weak convergence of ${h_k}\rightharpoonup h$ in $L^2(0,T;\mathbb{R}^N)$, we ascertain from \eqref{FOV} that 
	$\displaystyle \frac{\partial \mathcal{J}}{\partial U}(\widetilde{U})[h]=\lim_{k\to\infty} \frac{\partial \mathcal{J}}{\partial U}(\widetilde{U})[h_k] \geq 0$. Therefore, we conclude that $\displaystyle \frac{\partial \mathcal{J}}{\partial U}(\widetilde{U})[h]=0$.
	
\noindent	\underline{\textit{Step-II}} : $h \in \Lambda(\widetilde{U})$ 
	
	The collection of controls in $L^2(0,T;\mathbb{R}^N)$ that are non-negative when $\widetilde{U}(t)=a(t)$ and non-positive when $\widetilde{U}(t)=b(t)$, denoted as $\mathcal{S}$, is convex, closed, and thus, weakly closed. Since $h_k\in\mathcal{S}$ for each $k$, as a consequence of this weakly closed property, $h$ also belongs to $\mathcal{S}$.
	
	Finally, to establish $h\in \Lambda(\widetilde{U})$, we only need to demonstrate that $h=0$ if $\varUpsilon_{\widetilde{U}}\neq 0$, where $\varUpsilon_{\widetilde{U}}$ is given in Definition \ref{D-CC}. From the first-order variational inequality  \eqref{FOOC}, we have
	$$\sum_{i=1}^N\int_0^T \left(\varUpsilon_{\widetilde{U}}\right)_i(t)\cdot \big(U_i(t)-\widetilde{U}_i(t)\big)\ dt \geq 0,\ \ \ \ \forall \ U=(U_1,U_2,...,U_N)\in \mathbb{U}_{ad}.$$
	Furthermore, the following condition can be deduced from the above inequality:
	\begin{equation}\label{SO-E4}
		\begin{cases}
			\widetilde{U}_i(t)=a_i(t) \ \ \ \text{if}\ \ \left(\varUpsilon_{\widetilde{U}}\right)_i>0,\\
			\widetilde{U}_i(t)=b_i(t) \ \ \ \text{if}\ \ \left(\varUpsilon_{\widetilde{U}}\right)_i<0.
		\end{cases}
	\end{equation}
	Since $h$ belongs to the control set $\mathcal{S}$ and $\widetilde{U}$ satisfies the condition in \eqref{SO-E4}, we can establish the following equality:
	$$\sum_{i=1}^N \int_0^T |\varUpsilon_{\widetilde{U}_i}(t) \ h_i(t)|\ dt= \sum_{i=1}^N \int_0^T  \varUpsilon_{\widetilde{U}_i}(t)\ h_i(t)\ dt=\frac{\partial \mathcal{J}}{\partial U}(\widetilde{U}) [h]=0.$$
	This equality indicates that  $h_i(t)$ must be equal to $0$ whenever $\varUpsilon_{\widetilde{U}_i}\neq 0$. This concludes that $h\in \Lambda(\widetilde{U})$. 
	
	\noindent	\underline{\textit{Step-III}} :  $h=0$
	
	To establish our claim $h=0$, we aim to prove $\frac{\partial^2 \mathcal{J}}{\partial U^2}(\widetilde{U})[h,h]\leq 0$. We begin by employing the second-order Taylor series expansion for $\mathcal{J}(U_k)$. This allows us to express the function as follows:
	\begin{equation*}\label{SO-E6}
		\mathcal{J}(U_k)=\mathcal{J}(\widetilde{U})+\rho_k \frac{\partial \mathcal{J}}{\partial U}(\widetilde{U})[h_k]+\frac{\rho_k^2}{2}\frac{\partial^2 \mathcal{J}}{\partial U^2}(w_k)[h_k,h_k],
	\end{equation*}
	where $w_k$ is an intermediate point between $\widetilde{U}$ and $U_k$. By adding and subtracting $\frac{\rho_k^2}{2}\frac{\partial^2 \mathcal{J}}{\partial U^2}(\widetilde{U})[h_k,h_k]$ on both sides of the above equality, we can rewrite it as:
	\begin{equation}\label{SO-E7}
		\rho_k \frac{\partial \mathcal{J}}{\partial U}(\widetilde{U})[h_k]+\frac{\rho_k^2}{2}\frac{\partial^2 \mathcal{J}}{\partial U^2}(\widetilde{U})[h_k,h_k] =\mathcal{J}(U_k)-\mathcal{J}(\widetilde{U})+ \frac{\rho_k^2}{2} \left(\frac{\partial^2 \mathcal{J}}{\partial U^2}(\widetilde{U})-\frac{\partial^2 \mathcal{J}}{\partial U^2}(w_k)\right)[h_k,h_k].	
	\end{equation}
	By substituting $\|U_k-\widetilde{U}\|_{L^2(0,T;L^2(\Omega))}=\rho_k$ in estimate \eqref{CQGC}, we obtain $\mathcal{J}(U_k)-\mathcal{J}(\widetilde{U})< \frac{\rho_k^2}{k}$ for each $k$. Moreover, since $\frac{\partial \mathcal{J}}{\partial U}(\widetilde{U})[h_k] \geq 0$ and $\rho_k\to 0$, we have $\rho_k \frac{\partial \mathcal{J}}{\partial U}(\widetilde{U})[h_k] \geq \rho_k^2 \frac{\partial \mathcal{J}}{\partial U}(\widetilde{U})[h_k]$, for $k\geq k_0$ with sufficiently large $k_0$. Finally, substituting all these inequalities in equation \eqref{SO-E7}, we obtain
	\begin{equation}\label{SO-E9}
		\frac{\partial \mathcal{J}}{\partial U}(\widetilde{U})[h_k]+\frac{1}{2}\frac{\partial^2 \mathcal{J}}{\partial U^2}(\widetilde{U})[h_k,h_k] < \frac{1}{k} + \frac{1}{2} \left(\frac{\partial^2 \mathcal{J}}{\partial U^2}(\widetilde{U})-\frac{\partial^2 \mathcal{J}}{\partial U^2}(w_k)\right)[h_k,h_k].		
	\end{equation}
The second derivative of the functional $\mathcal{J}$ through \eqref{KKR} can be expressed as:
\begin{align}\label{SODE}
	\frac{\partial^2 \mathcal{J}}{\partial U^2}(\widetilde{U})[h_k,h_k]&=  \int_0^T  h_k^2 \ dt+ \int_{\Omega_T} \big(\phi'_{\widetilde{U}}[h_k]\times \widetilde{m}\big)\cdot \zetaup(h_k)\ dx \ dt  \nonumber\\
	&\hspace{.5cm} + \int_{\Omega_T} \big( \phi_{\widetilde{U}} \times m'_{\widetilde{U}}[h_k]\big) \cdot \zetaup(h_k) \ dx\ dt + \int_{\Omega_T} \phi'_{\widetilde{U}}[h_k] \cdot \zetaup(h_k) \ dx \ dt,
\end{align}
where $m'_{\widetilde{U}}[h_k]$ and $\phi'_{\widetilde{U}}[h_k]$ are solutions of system \eqref{LS2} and \eqref{ASD} respectively.

	In order to prove the second term on the right hand side of estimate \eqref{SO-E9} vanishes as $k \to \infty$, we have to show the following convergences in view of \eqref{SODE}:
	\begin{enumerate}[label=(\roman*)]
		\item $\displaystyle \bigg|\int_{\Omega_T} \Big(\phi'_{\widetilde{U}}[h_k] \times \widetilde{m}\Big) \cdot \zetaup(h_k)\ dx\ dt -\int_{\Omega_T} \Big(\phi'_{w_k}[h_k] \times m_{w_k}\Big) \cdot \zetaup(h_k)\ dx\ dt\ \bigg|  \to 0,$\\
		\item $\displaystyle  \bigg| \int_{\Omega_T} \big( \phi_{\widetilde{U}} \times m'_{\widetilde{U}}[h_k]\big) \cdot \zetaup(h_k) \ dx\ dt-\int_{\Omega_T} \big( \phi_{w_k} \times m'_{w_k}[h_k]\big) \cdot \zetaup(h_k) \ dx\ dt \ \bigg|  \to 0$,\\
		\item $\displaystyle  \bigg| \int_{\Omega_T} \phi'_{\widetilde{U}}[h_k] \cdot \zetaup(h_k) \ dx \ dt-\int_{\Omega_T} \phi'_{w_k}[h_k] \cdot \zetaup(h_k) \ dx \ dt \  \bigg|  \to 0\ \ \text{as}\ k\to \infty.$
	\end{enumerate}


We will only show the first convergence. Applying H\"older's inequality followed by Lipschitz continuity arguments \eqref{LCCTSO} and \eqref{LC-AD}, and the boundedness estimataes \eqref{FDCTC-S}, we derive 
	\begin{align*}
		\bigg|\int_{\Omega_T}&\Big(\phi'_{\widetilde{U}} [h_k] \times \widetilde{m}\Big) \cdot \zetaup(h_k)\ dx\ dt -\int_{\Omega_T} \Big(\phi'_{w_k} [h_k] \times m_{w_k}\Big) \cdot \zetaup(h_k)\ dx\ dt\ \bigg|&\\
		&\leq \int_{\Omega_T} \bigg(\big| \phi'_{\widetilde{U}} [h_k]-\phi'\big(w_k\big) [h_k] \big|\ |\widetilde{m}| +\big|\phi'_{w_k} [h_k]\big|\ |\widetilde{m}-m_{w_k}|\ \bigg)\ |\zetaup(h_k)|\ dx \ dt\\
		&\leq C\ \big\| \phi'_{\widetilde{U}} [h_k]-\phi'_{w_k} [h_k]\big\|_{L^2(0,T;L^2(\Omega))}\ \|\widetilde{m}\|_{L^\infty(0,T;H^1(\Omega))}\ \|\zetaup(h_k)\|_{L^2(0,T;H^1(\Omega))}\\
		&\hspace{1cm} + C\ \big\|\phi'_{w_k} [h_k]\big\|_{L^\infty(0,T;L^2(\Omega))} \ \|\widetilde{m}-m_{w_k}\|_{L^2(0,T;H^1(\Omega))}\ \|\zetaup(h_k)\|_{L^2(0,T;H^1(\Omega))}\\
		&\leq C\  \|\widetilde{U}-w_k\|_{L^2(0,T;\mathbb{R}^N(\Omega))}\ \|h_k\|^2_{L^2(0,T;\mathbb{R}^N(\Omega))} \to 0 \ \text{as}\ k\to \infty.
	\end{align*}
	Continuing in a similar fashion for (ii) and (iii) convergences, we obtain that the right hand side of \eqref{SO-E9} converges to 0 as $k\to \infty$.
	
	Now, let's examine the convergence of the second term on the left hand side of \eqref{SO-E9}. Since  $h_k \rightharpoonup h$ weakly in $L^2(0,T;\mathbb{R}^N)$, by weak sequential lower semi-continuity, we have $\int_0^T h^2\ dt\leq \liminf_{k \to \infty} \int_0^T h_k^2\ dt$. Furthermore, considering $\zetaup(h_k) \rightharpoonup \zetaup(h)$ weakly in $L^2(0,T;L^2(\Omega))$, and using the uniform boundedness, compact embeddings, and the results from Propositions \ref{P-CTS} and \ref{P-CTC}, we establish that $m'_{\widetilde{U}}[h_k]\to m'_{\widetilde{U}}[h]$ strongly in $L^2(0,T;H^2(\Omega))$ and $\phi'_{\widetilde{U}}[h_k]\to \phi'_{\widetilde{U}}[h]$ strongly in $L^2(0,T;L^2(\Omega))$. Therefore, applying these convergences for the third term of \eqref{SODE}, we have 
	\begin{align*}
		\bigg|\int_{\Omega_T}& \big(\phi_{\widetilde{U}} \times m'_{\widetilde{U}}[h_k]\big)\cdot \zetaup(h_k)\ dx\ dt  -\int_{\Omega_T} \big(\phi_{\widetilde{U}} \times m'_{\widetilde{U}}[h]\big)\cdot \zetaup(h)  \  dx \ dt\ \bigg| \\
		&\leq C\ \int_{\Omega_T} \big|\phi_{\widetilde{U}} \times m'_{\widetilde{U}}[h_k] - \phi_{\widetilde{U}} \times m'_{\widetilde{U}}[h]\big|\    |\zetaup(h_k)| \ dx\ dt + \bigg|\int_{\Omega_T} \big(\phi_{\widetilde{U}} \times m'_{\widetilde{U}}[h]\big)\cdot \big(\zetaup(h_k)-\zetaup(h)\big)  \  dx \ dt\ \bigg|\\
		&\leq C\ \|\phi_{\widetilde{U}}\|_{L^\infty(0,T;L^2(\Omega))}\ \|m'_{\widetilde{U}}[h_k]-m'_{\widetilde{U}}[h]\|_{L^2(0,T;H^1(\Omega))}\ \|\zetaup(h_k)\|_{L^2(0,T;H^1(\Omega))}\\
		&\hspace{2cm}+  \bigg|\int_{\Omega_T} \big(\phi_{\widetilde{U}} \times m'_{\widetilde{U}}[h]\big)\cdot \big(\zetaup(h_k)-\zetaup(h)\big)  \  dx \ dt\ \bigg| \to 0 \ \text{as}\ k\to \infty.
	\end{align*}
	Continuing similarly for the remaining terms in \eqref{SODE}, we get $\displaystyle \frac{\partial^2 \mathcal{J}}{\partial U^2}(\widetilde{U})[h,h] \leq \liminf_{k\to \infty} \frac{\partial^2 \mathcal{J}}{\partial U^2}(\widetilde{U})[h_k,h_k]$. Finally, as $k\to \infty$ in \eqref{SO-E9}, we establish $\frac{\partial^2 \mathcal{J}}{\partial U^2}(\widetilde{U})[h,h]\leq 0$, which concludes our assertion that $h=0$.
	
		\noindent	\underline{\textit{Step-IV}} : Contradiction to $\|h_k\|_{L^2(0,T;\mathbb{R}^N)}=1$ for each $k$

	Next, we will demonstrate that $h_k \to 0$ strongly in $L^2(0,T;\mathbb{R}^N)$.
	Since we have already established that $h=0$, the convergence of $\phi'_{\widetilde{U}}[h_k]$ and $m'_{\widetilde{U}}[h_k]$ to $0$ follows naturally. Implementing these results, we derive 	
	\begin{flalign*}
		0&< \lim_{k\to \infty} \int_0^T h_k^2 \ dt = \lim_{k\to \infty} \left\{\frac{\partial^2  \mathcal{J}}{\partial U^2}(\widetilde{U})[h_k,h_k] - \int_{\Omega_T} \Big(\phi'_{\widetilde{U}} [h_k] \times \widetilde{m}\Big) \cdot \zetaup(h_k)\ dx\ dt\right.\\ 		
		& \left.- \int_{\Omega_T} \big(\phi_{\widetilde{U}} \times m'_{\widetilde{U}}[h_k]\big)\cdot \zetaup(h_k)\ dx\ dt -  \int_{\Omega_T} \phi'_{\widetilde{U}} [h_k] \cdot \zetaup(h_k)\ dx\ dt \right\}\leq 0,
	\end{flalign*}
	which is a contradiction as $\|h_k\|_{L^2(0,T;\mathbb{R}^N)}=1$ for each k. This completes the proof.
\end{proof}


\section{Global Optimality Condition}\label{S-GOC}
The second order conditions typically offer local insights and often fall short of determining whether the given point represents a global minimum for the optimization problem. Now, assuming that we have an admissible control $\widetilde{U}$ which satisfies the first-order necessary condition \eqref{FOOC}, we will formulate a condition on the state and adjoint variable that guarantees that $\widetilde{U}$ is a global solution of MOCP.


\begin{proof}[Proof of Theorem \ref{T-GO}]
	
	In order to establish the global optimality of the control $\widetilde{U}$, our objective is to demonstrate that $ \mathcal{I}(U)-\mathcal{I}(\widetilde{U})\geq 0$ holds for all $U\in \mathbb{U}_{ad} \backslash \{\widetilde{U}\}$. Through a straightforward calculation, we arrive at the following equality:
\begin{align}\label{GO-1}
	\mathcal{I}(U)-\mathcal{I}(\widetilde{U}) =& \ \frac{1}{2}\int_0^T |\shat{U}|^2\  dt + \int_0^T \shat{U}  \cdot \widetilde{U} \ dt+ \int_{\Omega} \shat{m}(x,T)\cdot \big(\widetilde{m}(x,T)-m_{\Omega}(x)\big) \ dx \nonumber\\
	&+ \frac{1}{2} \int_{\Omega} |\shat{m}(x,T)|^2\ dx\ dt + \frac{1}{2} \int_{\Omega_T} |\shat{m}|^2\ dx\ dt+ \int_{\Omega_T} \shat{m} \cdot ( \widetilde{m}- m_d)\  \ dx\ dt,
\end{align}
 where $\shat{m}:= m-\widetilde{m}$, $\shat{U}:= U-\widetilde{U}$ and $m$ is the regular solution of \eqref{NLP-EV} corresponding to $U\in \mathbb{U}_{ad}$. Now, by substituting the first-order variational inequality \eqref{FOOC} into \eqref{GO-1}, we obtain 
\begin{equation}\label{GO-2}
	\mathcal{I}(U)-\mathcal{I}(\widetilde{U}) \geq  \ \frac{1}{2}\int_0^T  |\shat{U}|^2\ dt  + \frac{1}{2} \int_{\Omega} |\shat{m}(x,T)|^2\ dx\ dt + \frac{1}{2} \int_{\Omega_T} |\shat{m}|^2\ dx\ dt + \mathcal{R},	
\end{equation}
\begin{flalign*}
	\text{where}\ \mathcal{R} :=&  \int_{\Omega_T} \shat{m} \cdot \big( \widetilde{m}- m_d\big) \ dx\ dt+ \int_{\Omega} \shat{m}(x,T)\cdot \big(\widetilde{m}(x,T)-m_{\Omega}(x)\big) \ dx&\\
	&\hspace{1cm}-\int_{\Omega_T} \phi \cdot \zetaup(\shat{U})\ dx\ dt -\int_{\Omega_T} (\phi \times \widetilde{m})\cdot \zetaup(\shat{U}) \ dx\ dt.
\end{flalign*}

Furthermore, it is evident that the pair  $(\shat{m},\shat{U})$ satisfies the ensuing system:
\begin{equation}\label{GO-S}
	\begin{cases}
		\begin{array}{l}
			\mathcal{L}_{\widetilde{U}}\shat{m}= \shat{m} \times \Delta \shat{m} + \shat{m} \times \zetaup(\shat{U}) +\widetilde{m} \times \zetaup(\shat{U}) -|\shat{m}|^2 \shat{m} -|\shat{m}|^2\widetilde{m} -2\big(\shat{m}\cdot \widetilde{m}\big)\shat{m}+ \zetaup(\shat{U}) \  \text{in}\ \Omega_T,\\
			\frac{\partial \shat{m}}{\partial \eta}=0 \ \ \ \ \ \  \text{in}\ \partial \Omega_T,\\
			\shat{m}(x,0)=0\ \ \text{in}\ \Omega,
		\end{array}
	\end{cases}	
\end{equation}
where the operator $\mathcal{L}_{\widetilde{U}}$ is defined in \eqref{CLO}.

Considering the $L^2(\Omega_T)$ inner product of \eqref{GO-S} with $\phi$ and carrying out a time integration by parts using the identity $\int_0^T \langle \phi'(t),\shat{m}\rangle _{H^1(\Omega)^*\times H^1(\Omega)}\ dt=-\int_{\Omega_T}\phi\cdot \shat{m}_t\ dx\ dt + \int_{\Omega} \phi(T)\cdot \shat{m}(T)\ dx$, we obtain 
\begin{align}\label{GO-3}
	&-\int_0^T \langle \phi',\shat{m}\rangle \ dt +\int_{\Omega} \phi(T)\cdot \shat{m}(T)\ dx-\int_{\Omega_T} \Delta  \shat{m} \cdot \phi \ dx\ dt-\int_{\Omega_T} \big( \widetilde{m}\times \Delta \shat{m})\cdot \phi \ dx\ dt \nonumber\\
	&\hspace{1cm} -\int_{\Omega_T} \big(\shat{m} \times \Delta \widetilde{m}\big)\cdot \phi \ dx\ dt-\int_{\Omega_T} \big(\shat{m}\times \zetaup(\widetilde{U})\big)\cdot \phi \ dx\ dt +\int_{\Omega_T} \left(1+|\widetilde{m}|^2\right) \shat{m} \cdot \phi  \ dx\ dt  \nonumber\\
	&\hspace{1cm}+2\int_{\Omega_T} \big( \widetilde{m}\cdot \shat{m}\big)\ \widetilde{m}\cdot \phi \ dx\ dt=  \int_{\Omega_T}  \big(\shat{m} \times \Delta \shat{m} \big)\cdot \phi\ dx\ dt +\int_{\Omega_T} \big( \shat{m} \times \zetaup(\shat{U}) \big)\cdot \phi \ dx\ dt\nonumber\\
	&\hspace{1cm}+ \int_{\Omega_T} \big(\widetilde{m}\times \zetaup(\shat{U}) \big)\cdot \phi\ dx\ dt-\int_{\Omega_T} |\shat{m}|^2\ \shat{m} \cdot \phi \ dx\ dt -\int_{\Omega_T} |\shat{m}|^2\ \widetilde{m} \cdot \phi \ dx\ dt\nonumber\\
	&\hspace{1cm} -2 \int_{\Omega_T} \big(\shat{m}\cdot \widetilde{m}\big)\ \shat{m}\cdot \phi \ dx\ dt+ \int_{\Omega_T} \zetaup(\shat{U}) \cdot \phi \ dx\ dt.
\end{align}
After replacing $\vartheta$ with $\shat{m}$ in the weak formulation of the adjoint problem \eqref{AS} and conducting integration by parts for specific terms, we arrive at the following outcome:
\begin{align}\label{GO-4}
	&\int_0^T \langle \phi', \shat{m} \rangle \ dt +\int_{\Omega_T} \phi\cdot \Delta \shat{m}\ dx\ dt+ \int_{\Omega_T}  \big( \widetilde{m}\times \Delta \shat{m})\cdot \phi \ dx\ dt +\int_{\Omega_T} \big(\shat{m} \times \Delta \widetilde{m}\big)\cdot \phi \ dx\ dt\nonumber\\
	&\hspace{1cm} +\int_{\Omega_T} \big(\shat{m}\times \zetaup(\widetilde{U}) \big)\cdot \phi \ dx\ dt -\int_{\Omega_T} \left(1+|\widetilde{m}|^2\right) \shat{m} \cdot \phi  \ dx\ dt -2\int_{\Omega_T} \big( \widetilde{m}\cdot \shat{m}\big)\ \widetilde{m}\cdot \phi \ dx\ dt\nonumber\\
	&\hspace{1cm} =-\int_{\Omega_T} \big(\widetilde{m}-m_d\big)\cdot \shat{m}\ dx\ dt.
\end{align}
By combining equation  \eqref{GO-3} with \eqref{GO-4}, and comparing with the value of $\mathcal R$ in \eqref{GO-2}, we obtain 
\begin{align*}
	&\mathcal{R} =  \int_{\Omega_T}  \big(\shat{m} \times \Delta \shat{m} \big)\cdot \phi\ dx\ dt +\int_{\Omega_T} \big( \shat{m} \times \zetaup(\shat{U}) \big)\cdot \phi \ dx\ dt-\int_{\Omega_T} |\shat{m}|^2\ \shat{m} \cdot \phi \ dx\ dt\nonumber\\
	&\hspace{2cm} -\int_{\Omega_T} |\shat{m}|^2\ \widetilde{m} \cdot \phi \ dx\ dt -2 \int_{\Omega_T} \big(\shat{m}\cdot \widetilde{m}\big)\ \shat{m}\cdot \phi \ dx\ dt.
\end{align*}

Let's proceed to estimate the bounds for each term in $\mathcal{R}$. For the first two terms, we will apply H\"older's inequality and utilize the continuous embedding $H^1(\Omega)\hookrightarrow L^6(\Omega)$. Furthermore, using the Lipschitz continuity of the control-to-state operator \eqref{LCCTSO} and the uniform bounds for the control term $\|U\|^2_{L^2(0,T;\mathbb{R}^N)} \leq \|a\|^2_{L^2(0,T;\mathbb{R}^N)}+\|b\|^2_{L^2(0,T;\mathbb{R}^N)}:=C_{a,b}$, we find
\begin{flalign*}
	\int_{\Omega_T} \big( \shat{m} \times \Delta \shat{m}\big)&\cdot \phi\ dx\ dt \leq  \int_0^T \| \shat{m}(t)\|_{L^\infty(\Omega)} \ \|\Delta  \shat{m}(t)\|_{L^2(\Omega)} \ \|\phi(t)\|_{L^2(\Omega)}\ dt&\\
	&\leq C(\Omega,T)\  \|\phi\|_{L^2(0,T;L^2(\Omega))} \ \|\shat{m}\|^2_{L^\infty(0,T;H^2(\Omega))} \leq C(\Omega,T)\  \|\phi\|_{L^2(0,T;L^2(\Omega))} \ \|\shat{U}\|^2_{L^2(0,T;\mathbb{R}^N)},
\end{flalign*}
\begin{flalign*}
	\int_{\Omega_T} | \shat{m}|^2\ \shat{m}\cdot  \phi\ dx\ dt & \leq \int_0^T \|\shat{m}(t)\|^3_{L^6(\Omega)}\  \|\phi(t)\|_{L^2(\Omega)}\ dt \leq C(\Omega) \int_0^T \| \shat{m}(t)\|^3_{H^1(\Omega)} \ \|\phi(t)\|_{L^2(\Omega)}\ dt&\\
	& \leq C(\Omega,T)\ \|\phi\|_{L^2(0,T;L^2(\Omega))} \ \|\shat{m}\|^3_{L^{\infty}(0,T;H^1(\Omega))}\leq C\ \|\phi\|_{L^2(0,T;L^2(\Omega))} \ \|\shat{U}\|^3_{L^2(0,T;\mathbb{R}^N)}\\
	&\leq C(\Omega,T)\ \|\phi\|_{L^2(0,T;L^2(\Omega))}\ \left(\|U\|_{L^2(0,T;\mathbb{R}^N)}+\|\bar{U}\|_{L^2(0,T;\mathbb{R}^N)}\right)\ \|\shat{U}\|^2_{L^2(0,T;\mathbb{R}^N)}\\
	&\leq C(\Omega,T,C_{a,b})\ \|\phi\|_{L^2(0,T;L^2(\Omega))}\ \|\shat{U}\|^2_{L^2(0,T;\mathbb{R}^N)}.
\end{flalign*}
Continuing as above, we can derive the bounds for the third and forth terms as follows:
$$\int_{\Omega_T} | \shat{m}|^2\ \widetilde{m}\cdot  \phi\ dx\ dt + 2\int_{\Omega_T} \big(\shat{m}\cdot \widetilde{m}\big)\ \shat{m}\cdot \phi\ dx\ dt \leq C(\Omega,T)\ \|\widetilde{m}\|_{L^2(0,T;H^1(\Omega))}\ \|\phi\|_{L^2(0,T;L^2(\Omega))}\  \|\shat{U}\|^2_{L^2(0,T;\mathbb{R}^N)}.$$
The estimate for the term involving the control can also be derived by utilizing H\"older's inequality, the embedding $H^1(\Omega)\hookrightarrow L^p(\Omega)$ for $p\in [1,6]$, and \eqref{LCCTSO}. This leads us to the following derivation:
\begin{flalign*}
	&\int_{\Omega_T} \big(\shat{m} \times \zetaup(\shat{U})\big)\cdot \phi\ dx\ dt \leq  \int_0^T \|\shat{m}(t)\|_{L^4(\Omega)}\ \|\zetaup(\shat{U})(t)\|_{L^4(\Omega)} \ \|\phi(t)\|_{L^2(\Omega)}\ dt&\\
	&\hspace{0.2cm}\leq C(\Omega)\ \|\phi\|_{L^2(0,T;L^2(\Omega))}\ \|\shat{m}\|_{L^\infty(0,T;H^1(\Omega))}\ \|\zetaup(\shat{U})\|_{L^2(0,T;H^1(\Omega))} \leq C(\Omega,T,C_{a,b},m_0)\ \|\phi\|_{L^2(0,T;L^2(\Omega))} \|\shat{U}\|^2_{L^2(0,T;\mathbb{R}^N)}.
\end{flalign*}
Combining all the aforementioned estimates, we arrive at the subsequent bound for $\mathcal{R}$:
\begin{equation*}
	|\mathcal{R}| \leq C(\Omega,T,C_{a,b},m_0)\ \left\{ 1+ \|\widetilde{m}\|_{L^2(0,T;H^1(\Omega))} \right\} \ \|\phi\|_{L^2(0,T;L^2(\Omega))} \ \|\shat{U}\|^2_{L^2(0,T;\mathbb{R}^N)}.
\end{equation*}
By substituting this bound into estimate \eqref{GO-2}, we readily obtain our desired result. 
\end{proof}

\section{Uniqueness of Optimum}\label{S-UOO}

\begin{proof}[Proof of Theorem \ref{UOO}]
	Suppose there exists another optimal control $U\in \mathbb{U}_{ad}$ of the minimization problem OCP. Then through an application of the projection formula \eqref{PF}, we can derive
	\begin{flalign*}
	|U(t)-&\widetilde{U}(t)|\leq   \|\phi_{\widetilde{U}}(t) \|_{L^2(\Omega)} \ \|m_U(t)-m_{\widetilde{U}}(t)\|_{L^4(\Omega)}\ \|B\|_{L^4(\Omega)}\\
	&\hspace{0.5cm}  +\|\phi_U(t)-\phi_{\widetilde{U}}(t)\|_{L^2(\Omega)}\ \left( \|m_U(t)-\widetilde{m}(t)\|_{L^4(\Omega)}+ \|\widetilde{m}(t)\|_{L^4(\Omega)}+\|1\|_{L^4(\Omega)}\right) \ \|B\|_{L^4(\Omega)}.
	\end{flalign*}
	Then considering the $L^2(0,T)$ norm of $|U(t)-\widetilde{U}(t)|$ and applying the estimate $\|f\|_{L^4(\Omega)}\leq C_{4,n}\ \|f\|_{H^1(\Omega)}$, we obtain
	\begin{flalign}\label{UO-1}
		\int_0^T &|U(t)-\widetilde{U}(t)|^2\ dt \leq 2\ C^4_{4,n}\  \|\phi_{\widetilde{U}}\|^2_{L^\infty(0,T;L^2(\Omega))}\ \|m_{U}-\widetilde{m}\|^2_{L^2(0,T;H^1(\Omega))} \ \|B\|^2_{H^1(\Omega)} \nonumber\\
		&+ 2\ C^4_{4,n}\ \|\phi_U-\phi_{\widetilde{U}}\|^2_{L^2(0,T;L^2(\Omega))}\ \left( \|m_U-\widetilde{m}\|^2_{L^{\infty}(0,T;H^1(\Omega))}+\|\widetilde{m}\|^2_{L^\infty(0,T;H^1(\Omega))}+|\Omega|\right)\ \|B\|^2_{H^1(\Omega)}.
	\end{flalign}
Now, by employing a energy estimate argument on the equality $ \frac{d}{dt} \|\shat{\phi}(t)\|^2_{L^2(\Omega)}= 2\ \langle \shat{\phi}_t, \shat{\phi}\rangle_{H^1(\Omega)^*\times H^1(\Omega)}$ with $\shat{\phi}=\phi_U-\phi_{\widetilde{U}}$, we derive
	\begin{flalign*}
	\| \phi_U(t)-\phi_{\widetilde{U}}(t)\|^2_{L^2(\Omega)} &\leq 2\int_t^T \left\|\partial_{\tau}\phi_U(\tau)-\partial_{\tau}\phi_{\widetilde{U}}(\tau)\right\|_{H^1(\Omega)^*}\ \|\phi_U(\tau)-\phi_{\widetilde{U}}(\tau)\|_{H^1(\Omega)}\ d\tau + \| \phi_U(T)-\phi_{\widetilde{U}}(T)\|^2_{L^2(\Omega)}\\
	&\leq  2\ \left\|\partial_t\phi_U -\partial_t\phi_{\widetilde{U}}\right\|_{L^2(0,T;H^1(\Omega)^*)}\ \|\phi_U-\phi_{\widetilde{U}}\|_{L^2(0,T;H^1(\Omega))} +\|m_U-\widetilde{m}\|^2_{L^\infty(0,T;L^2(\Omega))}.
	\end{flalign*}
By integrating over $(0,T)$ and implementing the Lipschitz continuity estimates \eqref{LCCTSO} and \eqref{LCI}, we find
\begin{equation}\label{UO-2}
\int_0^T \|\phi_U(t)-\phi_{\widetilde{U}}(t)\|^2_{L^2(\Omega)}\ dt \leq \left( C_2 +2 C_3\right)\ T \ \|U-\widetilde{U}\|^2_{L^2(0,T;\mathbb{R}^N)}.	
\end{equation}
Similarly, proceeding for the state variable through the equality  $ \frac{d}{dt} \|\shat{m}(t)\|^2_{H^1(\Omega)}= 2\ \left(\frac{\partial \shat{m}}{\partial t}, \shat{m}-\Delta \shat{m}\right)_{L^2(\Omega)}$ with $\shat{m}=m_U-\widetilde{m}$, we can derive
\begin{align}\label{UO-3}
	&\int_0^T \left(\|m_U(t)-\widetilde{m}(t)\|^2_{L^2(\Omega)} + \|\nabla m_U(t)-\nabla \widetilde{m}(t)\|^2_{L^2(\Omega)}\right) \ dt\nonumber\\
	&\hspace{1cm}\leq 2 \int_0^T \left(\|m_U-\widetilde{m}\|_{L^2(0,T;L^2(\Omega))} +\|\Delta m_U-\Delta \widetilde{m}\|_{L^2(0,T;L^2(\Omega))}\right)\  \left\|\partial_t m_U-\partial_t \widetilde{m}\right\|_{L^2(0,T;L^2(\Omega))} \ dt \nonumber\\
	&\hspace{1cm} \leq 4C_2\  \|U-\widetilde{U}\|^2_{L^2(0,T;\mathbb{R}^N)}\ T.
\end{align}

Finally, substituting estimates \eqref{UO-2} and \eqref{UO-3} in \eqref{UO-1}, and using the the inequality $\|m_U-\widetilde{m}\|^2_{L^\infty(0,T;H^1(\Omega))} \leq C_2\ \|U-\widetilde{U}\|^2_{L^2(0,T;\mathbb{R}^N)} \leq 2C_2\ \left(\|U\|^2_{L^2(0,T;\mathbb{R}^N)}+\|\widetilde{U}\|^2_{L^2(0,T;\mathbb{R}^N)}\right)\leq 4C_2 C_{a,b}$, we obtain
\begin{flalign}\label{UO-4}
	\int_0^T& |U(t)-\widetilde{U}(t)|^2\ dt \leq  T \  2C^4_{4,n}\|B\|^2_{H^1(\Omega)}\bigg(4\ C_2\ \|\phi_{\widetilde{U}}\|^2_{L^\infty(0,T;L^2(\Omega))} \nonumber\\
	&\hspace{1cm} + \left( C_2 +2 C_3\right)\left( 4C_2\ C_{a,b} + \|\widetilde{m}\|_{L^\infty(0,T;H^1(\Omega))}+|\Omega|\right) \bigg) \  \|U-\widetilde{U}\|^2_{L^2(0,T;\mathbb{R}^N)}.
\end{flalign}

\noindent 	However, from assumption \eqref{ULOC} and estimate \eqref{UO-4}, it is clear that $\|U-\widetilde{U}\|_{L^2(0,T;\mathbb{R}^N)}=0$, that is, the optimal control is unique under the condition \eqref{ULOC}. Hence the proof.	
\end{proof}
\noindent {\bf Declarations} \medskip\\
\noindent {\bf Funding:} The National Board for Higher Mathematics has funded the second author's work with research grant (No.: 02011/13/2022/R$\&$D-II/10206).  \medskip\\
{\bf Conflict of interest:}  The authors declare that they have no conflict of interest.

\end{document}